\newtheorem{theorem}{Theorem}[section]
\newtheorem{proposition}[theorem]{Proposition}
\newtheorem{lemma}[theorem]{Lemma}
\newtheorem{corollary}[theorem]{Corollary}
\newtheorem{remark}[theorem]{Remark}
\newtheorem{example}[theorem]{Example}
\newenvironment{claime}[1][Claim]{\smallskip\noindent\textsc{#1}\it}{\rm\smallskip}
\newtheorem{definition}[theorem]{Definition}
\newtheorem{conjecture}[theorem]{Conjecture}
\newenvironment{case}[1][Case]{\textsl{#1}}{\smallskip}
\newenvironment{acknowledgement}{\smallskip{\sc Acknowledgments.}\rm}{\smallskip}
\renewcommand{\theequation}{\thesection.\arabic{equation}}
\let\pdfoutput=\undefined\fi
\chardef\@x10\chardef\@xv60
\def\tcitime{
\def\@time{%
  \@minute\time\@hour\@minute\divide\@hour\@xv
  \ifnum\@hour<\@x 0\fi\the\@hour:%
  \multiply\@hour\@xv\advance\@minute-\@hour
  \ifnum\@minute<\@x 0\fi\the\@minute
  }}%
\def\x@hyperref#1#2#3{%
   \catcode`\~ = 12
   \catcode`\$ = 12
   \catcode`\_ = 12
   \catcode`\# = 12
   \catcode`\& = 12
   \y@hyperref{#1}{#2}{#3}%
}
\def\y@hyperref#1#2#3#4{%
   #2\ref{#4}#3
   \catcode`\~ = 13
   \catcode`\$ = 3
   \catcode`\_ = 8
   \catcode`\# = 6
   \catcode`\& = 4
}
\def\QCTOpt[#1]#2{%
  \def\QCTOptB{#1}
  \def\QCTOptA{#2}
}
\def\QCTNOpt#1{%
  \def\QCTOptA{#1}
  \let\QCTOptB\empty
}
\def\Qct{%
  \@ifnextchar[{%
    \QCTOpt}{\QCTNOpt}
}
\def\QCBOpt[#1]#2{%
  \def\QCBOptB{#1}%
  \def\QCBOptA{#2}%
}
\def\QCBNOpt#1{%
  \def\QCBOptA{#1}%
  \let\QCBOptB\empty
}
\def\Qcb{%
  \@ifnextchar[{%
    \QCBOpt}{\QCBNOpt}%
}
\def\PrepCapArgs{%
  \ifx\QCBOptA\empty
    \ifx\QCTOptA\empty
      {}%
    \else
      \ifx\QCTOptB\empty
        {\QCTOptA}%
      \else
        [\QCTOptB]{\QCTOptA}%
      \fi
    \fi
  \else
    \ifx\QCBOptA\empty
      {}%
    \else
      \ifx\QCBOptB\empty
        {\QCBOptA}%
      \else
        [\QCBOptB]{\QCBOptA}%
      \fi
    \fi
  \fi
}
\def\GRAPHICSPS#1{%
 \ifcase\GRAPHICSTYPE
   \special{ps: #1}%
 \or
   \special{language "PS", include "#1"}%
 \fi
}%
\def\graffile#1#2#3#4{%
    \bgroup
	   \@inlabelfalse
       \leavevmode
       \@ifundefined{bbl@deactivate}{\def~{\string~}}{\activesoff}%
        \raise -#4 \BOXTHEFRAME{%
           \hbox to #2{\raise #3\hbox to #2{\null #1\hfil}}}%
    \egroup
}%
\def\draftbox#1#2#3#4{%
 \leavevmode\raise -#4 \hbox{%
  \frame{\rlap{\protect\tiny #1}\hbox to #2%
   {\vrule height#3 width\z@ depth\z@\hfil}%
  }%
 }%
}%
\let\nographics=\@msidraft
\newif\ifwasdraft
\def\GRAPHIC#1#2#3#4#5{%
   \ifnum\@msidraft=\@ne\draftbox{#2}{#3}{#4}{#5}%
   \else\graffile{#1}{#3}{#4}{#5}%
   \fi
}
\def\addtoLaTeXparams#1{%
    \edef\LaTeXparams{\LaTeXparams #1}}%
\newif\ifBoxFrame \BoxFramefalse
\newif\ifOverFrame \OverFramefalse
\newif\ifUnderFrame \UnderFramefalse
\def\BOXTHEFRAME#1{%
   \hbox{%
      \ifBoxFrame
         \frame{#1}%
      \else
         {#1}%
      \fi
   }%
}
\def\doFRAMEparams#1{\BoxFramefalse\OverFramefalse\UnderFramefalse\readFRAMEparams#1\end}%
\def\readFRAMEparams#1{%
 \ifx#1\end%
  \let\next=\relax
  \else
  \ifx#1i\dispkind=\z@\fi
  \ifx#1d\dispkind=\@ne\fi
  \ifx#1f\dispkind=\tw@\fi
  \ifx#1t\addtoLaTeXparams{t}\fi
  \ifx#1b\addtoLaTeXparams{b}\fi
  \ifx#1p\addtoLaTeXparams{p}\fi
  \ifx#1h\addtoLaTeXparams{h}\fi
  \ifx#1X\BoxFrametrue\fi
  \ifx#1O\OverFrametrue\fi
  \ifx#1U\UnderFrametrue\fi
  \ifx#1w
    \ifnum\@msidraft=1\wasdrafttrue\else\wasdraftfalse\fi
    \@msidraft=\@ne
  \fi
  \let\next=\readFRAMEparams
  \fi
 \next
 }%
\def\IFRAME#1#2#3#4#5#6{%
      \bgroup
      \let\QCTOptA\empty
      \let\QCTOptB\empty
      \let\QCBOptA\empty
      \let\QCBOptB\empty
      #6%
      \parindent=0pt
      \leftskip=0pt
      \rightskip=0pt
      \setbox0=\hbox{\QCBOptA}%
      \@tempdima=#1\relax
      \ifOverFrame
          \typeout{This is not implemented yet}%
          \show\HELP
      \else
         \ifdim\wd0>\@tempdima
            \advance\@tempdima by \@tempdima
            \ifdim\wd0 >\@tempdima
               \setbox1 =\vbox{%
                  \unskip\hbox to \@tempdima{\hfill\GRAPHIC{#5}{#4}{#1}{#2}{#3}\hfill}%
                  \unskip\hbox to \@tempdima{\parbox[b]{\@tempdima}{\QCBOptA}}%
               }%
               \wd1=\@tempdima
            \else
               \textwidth=\wd0
               \setbox1 =\vbox{%
                 \noindent\hbox to \wd0{\hfill\GRAPHIC{#5}{#4}{#1}{#2}{#3}\hfill}\\%
                 \noindent\hbox{\QCBOptA}%
               }%
               \wd1=\wd0
            \fi
         \else
            \ifdim\wd0>0pt
              \hsize=\@tempdima
              \setbox1=\vbox{%
                \unskip\GRAPHIC{#5}{#4}{#1}{#2}{0pt}%
                \break
                \unskip\hbox to \@tempdima{\hfill \QCBOptA\hfill}%
              }%
              \wd1=\@tempdima
           \else
              \hsize=\@tempdima
              \setbox1=\vbox{%
                \unskip\GRAPHIC{#5}{#4}{#1}{#2}{0pt}%
              }%
              \wd1=\@tempdima
           \fi
         \fi
         \@tempdimb=\ht1
         \advance\@tempdimb by -#2
         \advance\@tempdimb by #3
         \leavevmode
         \raise -\@tempdimb \hbox{\box1}%
      \fi
      \egroup%
}%
\def\DFRAME#1#2#3#4#5{%
  \vspace\topsep
  \hfil\break
  \bgroup
     \leftskip\@flushglue
	 \rightskip\@flushglue
	 \parindent\z@
	 \parfillskip\z@skip
     \let\QCTOptA\empty
     \let\QCTOptB\empty
     \let\QCBOptA\empty
     \let\QCBOptB\empty
	 \vbox\bgroup
        \ifOverFrame 
           #5\QCTOptA\par
        \fi
        \GRAPHIC{#4}{#3}{#1}{#2}{\z@}%
        \ifUnderFrame 
           \break#5\QCBOptA
        \fi
	 \egroup
  \egroup
  \vspace\topsep
  \break
}%
\def\FFRAME#1#2#3#4#5#6#7{%
  \@ifundefined{floatstyle}
    {
     \begin{figure}[#1]%
    }
    {
	 \ifx#1h
      \begin{figure}[H]%
	 \else
      \begin{figure}[#1]%
	 \fi
	}
  \let\QCTOptA\empty
  \let\QCTOptB\empty
  \let\QCBOptA\empty
  \let\QCBOptB\empty
  \ifOverFrame
    #4
    \ifx\QCTOptA\empty
    \else
      \ifx\QCTOptB\empty
        \caption{\QCTOptA}%
      \else
        \caption[\QCTOptB]{\QCTOptA}%
      \fi
    \fi
    \ifUnderFrame\else
      \label{#5}%
    \fi
  \else
    \UnderFrametrue%
  \fi
  \begin{center}\GRAPHIC{#7}{#6}{#2}{#3}{\z@}\end{center}%
  \ifUnderFrame
    #4
    \ifx\QCBOptA\empty
      \caption{}%
    \else
      \ifx\QCBOptB\empty
        \caption{\QCBOptA}%
      \else
        \caption[\QCBOptB]{\QCBOptA}%
      \fi
    \fi
    \label{#5}%
  \fi
  \end{figure}%
 }%
\def\makeactives{
  \catcode`\"=\active
  \catcode`\;=\active
  \catcode`\:=\active
  \catcode`\'=\active
  \catcode`\~=\active
}
   \gdef\activesoff{%
      \def"{\string"}%
      \def;{\string;}%
      \def:{\string:}%
      \def'{\string'}%
      \def~{\string~}%
    }
\def\FRAME#1#2#3#4#5#6#7#8{%
 \bgroup
 \ifnum\@msidraft=\@ne
   \wasdrafttrue
 \else
   \wasdraftfalse%
 \fi
 \def\LaTeXparams{}%
 \dispkind=\z@
 \def\LaTeXparams{}%
 \doFRAMEparams{#1}%
 \ifnum\dispkind=\z@\IFRAME{#2}{#3}{#4}{#7}{#8}{#5}\else
  \ifnum\dispkind=\@ne\DFRAME{#2}{#3}{#7}{#8}{#5}\else
   \ifnum\dispkind=\tw@
    \edef\@tempa{\noexpand\FFRAME{\LaTeXparams}}%
    \@tempa{#2}{#3}{#5}{#6}{#7}{#8}%
    \fi
   \fi
  \fi
  \ifwasdraft\@msidraft=1\else\@msidraft=0\fi{}%
  \egroup
 }%
\def\TEXUX#1{"texux"}
\def\limfunc#1{\mathop{\rm #1}}%
\def\func#1{\mathop{\rm #1}\nolimits}%
\long\def\QQQ#1#2{%
     \long\expandafter\def\csname#1\endcsname{#2}}%
\long\def\QQA#1#2{}%
\def\QTR#1#2{{\csname#1\endcsname {#2}}}%
\def\EXPAND#1[#2]#3{}%
\def\NOEXPAND#1[#2]#3{}%
\def\LaTeXparent#1{}%
\def\ChildStyles#1{}%
\def\ChildDefaults#1{}%
\def\QTagDef#1#2#3{}%
  \providecommand{\UNICODE}[2][]{\protect\rule{.1in}{.1in}}
  \providecommand{\U}[1]{\protect\rule{.1in}{.1in}}
\def\QQfnmark#1{\footnotemark}
 \def\abstract{%
  \if@twocolumn
   \section*{Abstract (Not appropriate in this style!)}%
   \else \small 
   \begin{center}{\bf Abstract\vspace{-.5em}\vspace{\z@}}\end{center}%
   \quotation 
   \fi
  }%
   \def\registered{\relax\ifmmode{}\r@gistered
                    \else$\m@th\r@gistered$\fi}%
 \def\r@gistered{^{\ooalign
  {\hfil\raise.07ex\hbox{$\scriptstyle\rm\text{R}$}\hfil\crcr
  \mathhexbox20D}}}}{}%
\newdimen\theight
\def\newfmtname{LaTeX2e}
  \DeclareOldFontCommand{\rm}{\normalfont\rmfamily}{\mathrm}
  \DeclareOldFontCommand{\sf}{\normalfont\sffamily}{\mathsf}
  \DeclareOldFontCommand{\tt}{\normalfont\ttfamily}{\mathtt}
  \DeclareOldFontCommand{\bf}{\normalfont\bfseries}{\mathbf}
  \DeclareOldFontCommand{\it}{\normalfont\itshape}{\mathit}
  \DeclareOldFontCommand{\sl}{\normalfont\slshape}{\@nomath\sl}
  \DeclareOldFontCommand{\sc}{\normalfont\scshape}{\@nomath\sc}
\def\alpha{{\Greekmath 010B}}%
\def\beta{{\Greekmath 010C}}%
\def\gamma{{\Greekmath 010D}}%
\def\delta{{\Greekmath 010E}}%
\def\epsilon{{\Greekmath 010F}}%
\def\zeta{{\Greekmath 0110}}%
\def\eta{{\Greekmath 0111}}%
\def\theta{{\Greekmath 0112}}%
\def\iota{{\Greekmath 0113}}%
\def\kappa{{\Greekmath 0114}}%
\def\lambda{{\Greekmath 0115}}%
\def\mu{{\Greekmath 0116}}%
\def\nu{{\Greekmath 0117}}%
\def\xi{{\Greekmath 0118}}%
\def\pi{{\Greekmath 0119}}%
\def\rho{{\Greekmath 011A}}%
\def\sigma{{\Greekmath 011B}}%
\def\tau{{\Greekmath 011C}}%
\def\upsilon{{\Greekmath 011D}}%
\def\phi{{\Greekmath 011E}}%
\def\chi{{\Greekmath 011F}}%
\def\psi{{\Greekmath 0120}}%
\def\omega{{\Greekmath 0121}}%
\def\varepsilon{{\Greekmath 0122}}%
\def\vartheta{{\Greekmath 0123}}%
\def\varpi{{\Greekmath 0124}}%
\def\varrho{{\Greekmath 0125}}%
\def\varsigma{{\Greekmath 0126}}%
\def\varphi{{\Greekmath 0127}}%
\def\nabla{{\Greekmath 0272}}
\def\FindBoldGroup{%
   {\setbox0=\hbox{$\mathbf{x\global\edef\theboldgroup{\the\mathgroup}}$}}%
}
\def\Greekmath#1#2#3#4{%
    \if@compatibility
        \ifnum\mathgroup=\symbold
           \mathchoice{\mbox{\boldmath$\displaystyle\mathchar"#1#2#3#4$}}%
                      {\mbox{\boldmath$\textstyle\mathchar"#1#2#3#4$}}%
                      {\mbox{\boldmath$\scriptstyle\mathchar"#1#2#3#4$}}%
                      {\mbox{\boldmath$\scriptscriptstyle\mathchar"#1#2#3#4$}}%
        \else
           \mathchar"#1#2#3#4%
        \fi 
    \else 
        \FindBoldGroup
        \ifnum\mathgroup=\theboldgroup 
           \mathchoice{\mbox{\boldmath$\displaystyle\mathchar"#1#2#3#4$}}%
                      {\mbox{\boldmath$\textstyle\mathchar"#1#2#3#4$}}%
                      {\mbox{\boldmath$\scriptstyle\mathchar"#1#2#3#4$}}%
                      {\mbox{\boldmath$\scriptscriptstyle\mathchar"#1#2#3#4$}}%
        \else
           \mathchar"#1#2#3#4%
        \fi     	    
	  \fi}
\newif\ifGreekBold  \GreekBoldfalse
\let\SAVEPBF=\pbf
\def\pbf{\GreekBoldtrue\SAVEPBF}%
  \newcounter{equationnumber}  
  \def\mathletters{%
     \addtocounter{equation}{1}
     \edef\@currentlabel{\theequation}%
     \setcounter{equationnumber}{\c@equation}
     \setcounter{equation}{0}%
     \edef\theequation{\@currentlabel\noexpand\alph{equation}}%
  }
    \def\BibTeX{{\rm B\kern-.05em{\sc i\kern-.025em b}\kern-.08em
                 T\kern-.1667em\lower.7ex\hbox{E}\kern-.125emX}}}{}%
\def\AmS{{\protect\usefont{OMS}{cmsy}{m}{n}%
                A\kern-.1667em\lower.5ex\hbox{M}\kern-.125emS}}}{}%
\def\@@eqncr{\let\@tempa\relax
    \ifcase\@eqcnt \def\@tempa{& & &}\or \def\@tempa{& &}%
      \else \def\@tempa{&}\fi
     \@tempa
     \if@eqnsw
        \iftag@
           \@taggnum
        \else
           \@eqnnum\stepcounter{equation}%
        \fi
     \fi
     \global\tag@false
     \global\@eqnswtrue
     \global\@eqcnt\z@\cr}
\def\TCItag{\@ifnextchar*{\@TCItagstar}{\@TCItag}}
\def\@TCItag#1{%
    \global\tag@true
    \global\def\@taggnum{(#1)}%
    \global\def\@currentlabel{#1}}
\def\@TCItagstar*#1{%
    \global\tag@true
    \global\def\@taggnum{#1}%
    \global\def\@currentlabel{#1}}
\def\tbigcup{\mathop{\textstyle \bigcup }}%
\def\dbigoplus{\mathop{\displaystyle \bigoplus }}%
\def\ExitTCILatex{\makeatother }
\if@compatibility\message{amsmath already loaded}\fi\aftergroup\ExitTCILatex}
\if@compatibility\message{amstex already loaded}\fi\aftergroup\ExitTCILatex}
\if@compatibility\message{amsgen already loaded}\fi\aftergroup\ExitTCILatex}
\let\DOTSI\relax
\def\RIfM@{\relax\ifmmode}%
\def\FN@{\futurelet\next}%
\def\iint{\DOTSI\intno@\tw@\FN@\ints@}%
\def\iiint{\DOTSI\intno@\thr@@\FN@\ints@}%
\def\iiiint{\DOTSI\intno@4 \FN@\ints@}%
\def\idotsint{\DOTSI\intno@\z@\FN@\ints@}%
\def\ints@{\findlimits@\ints@@}%
\newif\iflimtoken@
\newif\iflimits@
\def\findlimits@{\limtoken@true\ifx\next\limits\limits@true
 \else\ifx\next\nolimits\limits@false\else
 \limtoken@false\ifx\ilimits@\nolimits\limits@false\else
 \ifinner\limits@false\else\limits@true\fi\fi\fi\fi}%
\def\multint@{\int\ifnum\intno@=\z@\intdots@                          
 \else\intkern@\fi                                                    
 \ifnum\intno@>\tw@\int\intkern@\fi                                   
 \ifnum\intno@>\thr@@\int\intkern@\fi                                 
 \int}
\def\multintlimits@{\intop\ifnum\intno@=\z@\intdots@\else\intkern@\fi
 \ifnum\intno@>\tw@\intop\intkern@\fi
 \ifnum\intno@>\thr@@\intop\intkern@\fi\intop}%
\def\intic@{%
    \mathchoice{\hskip.5em}{\hskip.4em}{\hskip.4em}{\hskip.4em}}%
\def\negintic@{\mathchoice
 {\hskip-.5em}{\hskip-.4em}{\hskip-.4em}{\hskip-.4em}}%
\def\ints@@{\iflimtoken@                                              
 \def\ints@@@{\iflimits@\negintic@
   \mathop{\intic@\multintlimits@}\limits                             
  \else\multint@\nolimits\fi                                          
  \eat@}
 \else                                                                
 \def\ints@@@{\iflimits@\negintic@
  \mathop{\intic@\multintlimits@}\limits\else
  \multint@\nolimits\fi}\fi\ints@@@}%
\def\intkern@{\mathchoice{\!\!\!}{\!\!}{\!\!}{\!\!}}%
\def\plaincdots@{\mathinner{\cdotp\cdotp\cdotp}}%
\def\intdots@{\mathchoice{\plaincdots@}%
 {{\cdotp}\mkern1.5mu{\cdotp}\mkern1.5mu{\cdotp}}%
 {{\cdotp}\mkern1mu{\cdotp}\mkern1mu{\cdotp}}%
 {{\cdotp}\mkern1mu{\cdotp}\mkern1mu{\cdotp}}}%
\def\RIfM@{\relax\protect\ifmmode}
\def\text{\RIfM@\expandafter\text@\else\expandafter\mbox\fi}
\let\nfss@text\text
\def\text@#1{\mathchoice
   {\textdef@\displaystyle\f@size{#1}}%
   {\textdef@\textstyle\tf@size{\firstchoice@false #1}}%
   {\textdef@\textstyle\sf@size{\firstchoice@false #1}}%
   {\textdef@\textstyle \ssf@size{\firstchoice@false #1}}%
   \glb@settings}
\def\textdef@#1#2#3{\hbox{{%
                    \everymath{#1}%
                    \let\f@size#2\selectfont
                    #3}}}
\newif\iffirstchoice@
\def\Let@{\relax\iffalse{\fi\let\\=\cr\iffalse}\fi}%
\def\vspace@{\def\vspace##1{\crcr\noalign{\vskip##1\relax}}}%
\def\multilimits@{\bgroup\vspace@\Let@
 \baselineskip\fontdimen10 \scriptfont\tw@
 \advance\baselineskip\fontdimen12 \scriptfont\tw@
 \lineskip\thr@@\fontdimen8 \scriptfont\thr@@
 \lineskiplimit\lineskip
 \vbox\bgroup\ialign\bgroup\hfil$\m@th\scriptstyle{##}$\hfil\crcr}%
\def\Sb{_\multilimits@}%
\def\endSb{\crcr\egroup\egroup\egroup}%
\def\Sp{^\multilimits@}%
\newdimen\ex@
\def\rightarrowfill@#1{$#1\m@th\mathord-\mkern-6mu\cleaders
 \hbox{$#1\mkern-2mu\mathord-\mkern-2mu$}\hfill
 \mkern-6mu\mathord\rightarrow$}%
\def\leftarrowfill@#1{$#1\m@th\mathord\leftarrow\mkern-6mu\cleaders
 \hbox{$#1\mkern-2mu\mathord-\mkern-2mu$}\hfill\mkern-6mu\mathord-$}%
\def\leftrightarrowfill@#1{$#1\m@th\mathord\leftarrow
\mkern-6mu\cleaders
 \hbox{$#1\mkern-2mu\mathord-\mkern-2mu$}\hfill
 \mkern-6mu\mathord\rightarrow$}%
\def\overrightarrow{\mathpalette\overrightarrow@}%
\def\overrightarrow@#1#2{\vbox{\ialign{##\crcr\rightarrowfill@#1\crcr
 \noalign{\kern-\ex@\nointerlineskip}$\m@th\hfil#1#2\hfil$\crcr}}}%
\def\overleftarrow{\mathpalette\overleftarrow@}%
\def\overleftarrow@#1#2{\vbox{\ialign{##\crcr\leftarrowfill@#1\crcr
 \noalign{\kern-\ex@\nointerlineskip}$\m@th\hfil#1#2\hfil$\crcr}}}%
\def\overleftrightarrow{\mathpalette\overleftrightarrow@}%
\def\overleftrightarrow@#1#2{\vbox{\ialign{##\crcr
   \leftrightarrowfill@#1\crcr
 \noalign{\kern-\ex@\nointerlineskip}$\m@th\hfil#1#2\hfil$\crcr}}}%
\def\underrightarrow{\mathpalette\underrightarrow@}%
\def\underrightarrow@#1#2{\vtop{\ialign{##\crcr$\m@th\hfil#1#2\hfil
  $\crcr\noalign{\nointerlineskip}\rightarrowfill@#1\crcr}}}%
\def\underleftarrow{\mathpalette\underleftarrow@}%
\def\underleftarrow@#1#2{\vtop{\ialign{##\crcr$\m@th\hfil#1#2\hfil
  $\crcr\noalign{\nointerlineskip}\leftarrowfill@#1\crcr}}}%
\def\underleftrightarrow{\mathpalette\underleftrightarrow@}%
\def\underleftrightarrow@#1#2{\vtop{\ialign{##\crcr$\m@th
  \hfil#1#2\hfil$\crcr
 \noalign{\nointerlineskip}\leftrightarrowfill@#1\crcr}}}%
\def\qopnamewl@#1{\mathop{\operator@font#1}\nlimits@}
\let\nlimits@\displaylimits
\def\setboxz@h{\setbox\z@\hbox}
\def\varlim@#1#2{\mathop{\vtop{\ialign{##\crcr
 \hfil$#1\m@th\operator@font lim$\hfil\crcr
 \noalign{\nointerlineskip}#2#1\crcr
 \noalign{\nointerlineskip\kern-\ex@}\crcr}}}}
 \def\rightarrowfill@#1{\m@th\setboxz@h{$#1-$}\ht\z@\z@
  $#1\copy\z@\mkern-6mu\cleaders
  \hbox{$#1\mkern-2mu\box\z@\mkern-2mu$}\hfill
  \mkern-6mu\mathord\rightarrow$}
\def\leftarrowfill@#1{\m@th\setboxz@h{$#1-$}\ht\z@\z@
  $#1\mathord\leftarrow\mkern-6mu\cleaders
  \hbox{$#1\mkern-2mu\copy\z@\mkern-2mu$}\hfill
  \mkern-6mu\box\z@$}
\def\projlim{\qopnamewl@{proj\,lim}}
\def\injlim{\qopnamewl@{inj\,lim}}
\def\varinjlim{\mathpalette\varlim@\rightarrowfill@}
\def\varprojlim{\mathpalette\varlim@\leftarrowfill@}
\def\varliminf{\mathpalette\varliminf@{}}
\def\varliminf@#1{\mathop{\underline{\vrule\@depth.2\ex@\@width\z@
   \hbox{$#1\m@th\operator@font lim$}}}}
\def\varlimsup{\mathpalette\varlimsup@{}}
\def\varlimsup@#1{\mathop{\overline
  {\hbox{$#1\m@th\operator@font lim$}}}}
\def\align{\@verbatim \frenchspacing\@vobeyspaces \@alignverbatim
You are using the "align" environment in a style in which it is not defined.}
\let\csname endalign*\endcsname =\endtrivlist
\def\alignat{\@verbatim \frenchspacing\@vobeyspaces \@alignatverbatim
You are using the "alignat" environment in a style in which it is not defined.}
\let\csname endalignat*\endcsname =\endtrivlist
\def\xalignat{\@verbatim \frenchspacing\@vobeyspaces \@xalignatverbatim
You are using the "xalignat" environment in a style in which it is not defined.}
\let\csname endxalignat*\endcsname =\endtrivlist
\def\gather{\@verbatim \frenchspacing\@vobeyspaces \@gatherverbatim
You are using the "gather" environment in a style in which it is not defined.}
\let\csname endgather*\endcsname =\endtrivlist
\def\multiline{\@verbatim \frenchspacing\@vobeyspaces \@multilineverbatim
You are using the "multiline" environment in a style in which it is not defined.}
\let\csname endmultiline*\endcsname =\endtrivlist
\def\arrax{\@verbatim \frenchspacing\@vobeyspaces \@arraxverbatim
You are using a type of "array" construct that is only allowed in AmS-LaTeX.}
\def\tabulax{\@verbatim \frenchspacing\@vobeyspaces \@tabulaxverbatim
You are using a type of "tabular" construct that is only allowed in AmS-LaTeX.}
\let\csname endarrax*\endcsname =\endtrivlist
\let\csname endtabulax*\endcsname =\endtrivlist
 \def\endequation{%
     \ifmmode\ifinner 
      \iftag@
        \addtocounter{equation}{-1} 
        $\hfil
           \displaywidth\linewidth\@taggnum\egroup \endtrivlist
        \global\tag@false
        \global\@ignoretrue   
      \else
        $\hfil
           \displaywidth\linewidth\@eqnnum\egroup \endtrivlist
        \global\tag@false
        \global\@ignoretrue 
      \fi
     \else   
      \iftag@
        \addtocounter{equation}{-1} 
        \eqno \hbox{\@taggnum}
        \global\tag@false%
        $$\global\@ignoretrue
      \else
        \eqno \hbox{\@eqnnum}
        $$\global\@ignoretrue
      \fi
     \fi\fi
 } 
 \newif\iftag@ \tag@false
 \def\TCItag{\@ifnextchar*{\@TCItagstar}{\@TCItag}}
 \def\@TCItag#1{%
     \global\tag@true
     \global\def\@taggnum{(#1)}%
     \global\def\@currentlabel{#1}}
 \def\@TCItagstar*#1{%
     \global\tag@true
     \global\def\@taggnum{#1}%
     \global\def\@currentlabel{#1}}
     \def\tag{\@ifnextchar*{\@tagstar}{\@tag}}
     \def\@tag#1{%
         \global\tag@true
         \global\def\@taggnum{(#1)}}
     \def\@tagstar*#1{%
         \global\tag@true
         \global\def\@taggnum{#1}}
\def\RM{\rm}
\def\qed{\hfill$\square$\par}
\renewenvironment{proof}[1][Proof]{\textbf{#1.} }{\ \rule{0.5em}{0.5em}}
\def\limfunc#1{\mathop{\mathrm{#1}}}
\def\func#1{\mathop{\mathrm{#1}}\nolimits}
\def\tbigcup{\bigcup}
\def\dbigoplus{\displaystyle\bigoplus}
\def\enddoc{
\begin{document}

\title{Negative eigenvalues of two-dimensional Schr\"{o}dinger operators}
\author{Alexander Grigor'yan\thanks{%
Research partially supported by SFB 701 of the German Research Council (DFG)}
\\
Fakult\"{a}t f\"{u}r Mathematik\\
Universit\"{a}t Bielefeld\\
Postfach 100131\\
33501 Bielefeld, Germany \and Nikolai Nadirashvili \\
CNRS, LATP \\
Centre de Math\'{e}matiques et Informatique\\
Universit\'{e} Aix-Marseille\\
13453 Marseille, France }
\date{April 2012}
\maketitle

\begin{abstract}
We prove a certain upper bound for the number of negative eigenvalues of the
Schr\"{o}dinger operator $H=-\Delta -V$ in $\mathbb{R}^{2}.$
\end{abstract}

\tableofcontents

\section{Introduction}

\label{SecIntro} \setcounter{equation}{0}

\subsection{Main statement}

Given a non-negative $L_{loc}^{1}$ function $V\left( x\right) $ on $\mathbb{R%
}^{n}$, consider the Schr\"{o}dinger type operator%
\begin{equation*}
H_{V}=-\Delta -V
\end{equation*}%
where $\Delta =\sum_{k=1}^{n}\frac{\partial ^{2}}{\partial x_{k}^{2}}$ is
the classical Laplace operator. More precisely, $H_{V}$ is defined as a form
sum of $-\Delta $ and $-V$, so that, under certain assumptions about $V$,
the operator $H_{V}$ is self-adjoint in $L^{2}\left( \mathbb{R}^{n}\right) $%
. Denote by $\func{Neg}\left( V,\mathbb{R}^{n}\right) $ the number of
non-positive eigenvalues of $H_{V}$ counted with multiplicity, assuming that
its spectrum in $(-\infty ,0]$ is discrete.

For the operator $H_{V}$ in $\mathbb{R}^{n}$ with $n\geq 3$ a celebrated
inequality of Cwikel-Lieb-Rozenblum says that%
\begin{equation}
\func{Neg}\left( V,\mathbb{R}^{n}\right) \leq C_{n}\int_{\mathbb{R}%
^{n}}V\left( x\right) ^{n/2}dx.  \label{CLR}
\end{equation}%
This estimate was proved independently by the above named authors in
1972-1977 in \cite{Cwikel}, \cite{Lieb}, and \cite{Rozen}, respectively%
\footnote{%
See also \cite{GrigYauHigh}, \cite{LevSolom}, \cite{LiYauE}, \cite{LiebE}
for further developments.}.

The estimate (\ref{CLR}) is not valid in $\mathbb{R}^{2}$ as one can see on
simple examples. On the contrary, in $\mathbb{R}^{2}$ a similar lower bound
holds:%
\begin{equation}
\func{Neg}\left( V,\mathbb{R}^{2}\right) \geq c\int_{\mathbb{R}^{2}}V\left(
x\right) dx  \label{Neglow}
\end{equation}%
that was proved in \cite{GrigNetYau}.

Our main result -- Theorem \ref{Tmainp} below, provides an upper bound for $%
\func{Neg}\left( V,\mathbb{R}^{2}\right) .$ To state it, let us introduce
some notation. For any $n\in \mathbb{Z}$ define the annuli $U_{n}$ and $%
W_{n} $ in $\mathbb{R}^{2}$ by%
\begin{equation}
U_{n}=\left\{ 
\begin{array}{ll}
\{e^{2^{n-1}}<\left\vert x\right\vert <e^{2^{n}}\}, & n\geq 1, \\ 
\{e^{-1}<\left\vert x\right\vert <e\}, & n=0, \\ 
\{e^{-2^{\left\vert n\right\vert }}<\left\vert x\right\vert
<e^{-2^{\left\vert n\right\vert -1}}\}, & n\leq -1,%
\end{array}%
\right.  \label{Un}
\end{equation}%
and 
\begin{equation}
W_{n}=\left\{ x\in \mathbb{R}^{2}:e^{n}<\left\vert x\right\vert
<e^{n+1}\right\} .  \label{Wn}
\end{equation}%
Given a potential (=a non-negative $L_{loc}^{1}$-function$\mathbb{)}$ $%
V\left( x\right) $ on $\mathbb{R}^{2}$ and $p>1$, define for any $n\in 
\mathbb{Z}$ the following quantities:%
\begin{equation}
A_{n}\left( V\right) =\int_{U_{n}}V\left( x\right) \left( 1+\left\vert \ln
\left\vert x\right\vert \right\vert \right) dx  \label{An}
\end{equation}%
and%
\begin{equation}
B_{n}\left( V\right) =\left( \int_{W_{n}}V^{p}\left( x\right) \left\vert
x\right\vert ^{2\left( p-1\right) }dx\right) ^{1/p}.  \label{Bn}
\end{equation}%
We will write for simplicity $A_{n}$ and $B_{n}$ for $A_{n}\left( V\right) $
and $B_{n}\left( V\right) $, respectively, if it is clear from the context
to which potential $V$ this refers.

\begin{theorem}
\label{Tmainp}\label{TWDini}For any non-negative function $V\in
L_{loc}^{1}\left( \mathbb{R}^{2}\right) $ and $p>1$, we have%
\begin{equation}
\func{Neg}\left( V,\mathbb{R}^{2}\right) \leq 1+C\sum_{\left\{ n\in \mathbb{Z%
}:A_{n}>c\right\} }\sqrt{A_{n}}+C\sum_{\left\{ n\in \mathbb{Z}%
:B_{n}>c\right\} }B_{n},  \label{NegIn}
\end{equation}%
where $C,c$ are some positive constants depending only on $p$.
\end{theorem}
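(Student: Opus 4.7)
The plan is to perform the conformal change of variables $x = e^{t}\omega$ with $(t,\omega)\in \mathbb{R}\times \mathbb{S}^1$, which identifies $\mathbb{R}^2\setminus\{0\}$ with the flat cylinder. Under this change the Dirichlet form becomes
\[
\int_{\mathbb{R}^2}|\nabla u|^2\,dx = \int_{\mathbb{R}\times \mathbb{S}^1}\bigl(|\partial_t u|^2+|\partial_\theta u|^2\bigr)\,dt\,d\theta,
\]
while the potential term transforms as $\int V u^2\,dx = \int \widetilde V u^2\,dt\,d\theta$ with $\widetilde V(t,\theta)=V(e^t\omega)e^{2t}$. Hence, by the Glazman lemma, $\func{Neg}(V,\mathbb{R}^2)$ is bounded by the maximal dimension of a subspace on which $\int |\widetilde\nabla u|^2 < \int \widetilde V u^2$ on the cylinder, plus a universal constant. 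This reduces the problem to counting negative directions of a quadratic form on a flat cylinder.

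Next, I would decompose any test function as $u = u_0(t) + u_1(t,\theta)$, where $u_0$ is the angular mean and $u_1$ has vanishing angular mean. Orthogonality gives $\int|\widetilde\nabla u|^2 = \int|\widetilde\nabla u_0|^2 + \int|\widetilde\nabla u_1|^2$, and Cauchy--Schwarz gives $\int \widetilde V u^2 \leq 2\int \widetilde V u_0^2 + 2\int \widetilde V u_1^2$. Consequently,
\[
\func{Neg}(V,\mathbb{R}^2)\leq 1 + N_0 + N_1,
\]
where $N_0$ (resp.\ $N_1$) counts the bad directions for the radial (resp.\ non-radial) component, with the extra $+1$ accounting for the ground state of the conformally transformed problem.

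For $N_0$, the radial problem reduces to a 1D Schrödinger operator $-\tfrac{d^2}{dt^2} - c\,w(t)$ on $\mathbb{R}$, where $w(t)=\int_0^{2\pi}\widetilde V(t,\theta)\,d\theta$. The intervals $I_n\subset \mathbb{R}$ corresponding to $U_n$ under $t=\ln|x|$ have length $\asymp 2^{|n|}$, and on them the weight $1+|\ln|x||$ is comparable to $2^{|n|}$, so
\[
|I_n|\int_{I_n} w(t)\,dt \asymp A_n.
\]
Applying a Molchanov--Birman--Solomyak type estimate on each $I_n$ (for a 1D Schrödinger operator on an interval of length $L$, the number of negative eigenvalues is $\leq 1 + C\sqrt{L\int_I w}\,$ provided the right-hand side exceeds a constant), and then gluing by the Dirichlet--Neumann bracketing on the $\{I_n\}$, yields $N_0 \leq C\sum_{\{n:A_n>c\}}\sqrt{A_n}$.

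For $N_1$, the angular Poincaré inequality $\|u_1(t,\cdot)\|_{L^2(\mathbb{S}^1)}\leq \|\partial_\theta u_1(t,\cdot)\|_{L^2(\mathbb{S}^1)}$ eliminates the zero mode, making the non-radial problem subcritical on each slab $(n,n+1)\times \mathbb{S}^1$, which corresponds to $W_n$ via $t=\ln|x|$. Using Sobolev embedding $H^1\hookrightarrow L^{2q}$ on this slab (with $1/p+1/q=1$) and Hölder's inequality,
\[
\int_{W_n}\widetilde V\,u_1^2\,dt\,d\theta \leq \Bigl(\int_{W_n}\widetilde V^p\,dt\,d\theta\Bigr)^{1/p}\|u_1\|_{L^{2q}}^2 \leq C\,B_n\,\|u_1\|_{H^1(W_n)}^2,
\]
where the equality $\int_{W_n}\widetilde V^p\,dt\,d\theta = B_n^p$ follows from a direct change of variables (note $dt\,d\theta = |x|^{-2}\,dx$, so $\int V^p e^{2pt}\,dt\,d\theta = \int_{W_n} V^p |x|^{2(p-1)}\,dx$). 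A partition-of-unity / Besicovitch covering argument over the $W_n$'s, combined with a Birman--Schwinger / CLR-type argument for the operator with mass gap, yields $N_1 \leq C\sum_{\{n:B_n>c\}}B_n$.

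The main obstacle is the 1D Solomyak-type estimate in the form needed: one must show that the $+1$ contributions on each $I_n$ can be amortized into the single additive $+1$ in the global bound, which requires being careful that only intervals with $A_n$ above a threshold contribute. A standard way around this is to remove from consideration those $I_n$ on which the 1D operator has only a trivial bound (i.e.\ $A_n\leq c$), using the thresholding that already appears in the statement of the theorem. The non-radial argument is technically cleaner thanks to the mass gap, but still requires a careful choice of cutoffs to ensure the gradients of the cutoff functions do not destroy the Poincaré gain. Once both parts are assembled, the theorem follows.
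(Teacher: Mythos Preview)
Your framework is different from the paper's and is in principle workable: the paper maps $\mathbb{R}^2\setminus L$ ($L$ a ray) conformally to a strip and never separates radial from non-radial modes; instead it studies the Green operator of $-\Delta+V_0$ (with $V_0$ a fixed small bump), transfers it to the strip, and bounds its norm by a discrete weighted Hardy inequality. Your cylinder picture with the angular Fourier split is a legitimate alternative, and your treatment of $N_1$ is essentially correct: the angular Poincar\'e gap kills the $+1$ on each slab $(n,n+1)\times\mathbb{S}^1$, so Neumann bracketing plus the Sobolev/H\"older bound and an $L^p$ square estimate (the paper's Lemma~4.5) really do give $N_1\le C\sum_{\{B_n>c\}}B_n$.

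The gap is in $N_0$. Your one-dimensional estimate $\func{Neg}\le 1+C\sqrt{L\int_I w}$ on a single interval is fine, but Neumann bracketing over the dyadic intervals $I_n$ produces a $+1$ from \emph{every} interval, not just the large ones. Saying that one ``removes from consideration'' the $I_n$ with $A_n\le c$ is not a proof: under bracketing those intervals still each contribute $1$, and there are infinitely many of them. What is actually needed is the statement that if several consecutive $I_n$ all satisfy $A_n\le c$, then the one-dimensional operator on their union (a single long interval or half-line) still has $\func{Neg}=1$. This is not a bracketing statement and cannot be obtained from the per-interval bound; it requires a global input. In the paper this is exactly the content of Proposition~7.3, which rests on the Green-operator norm bound of Lemma~7.1 and ultimately on the discrete weighted Hardy inequality of Lemma~6.1. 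The same issue is why the restricted sum $\sum_{\{A_n>c\}}\sqrt{A_n}$ is an improvement over the Naimark--Solomyak bound $\sum_n\sqrt{A_n}$: obtaining the restriction is the new idea, and your sketch does not supply it. Until you provide an argument of that type for the radial part, the proof is incomplete at precisely the point that carries the theorem's main novelty.
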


The additive term $1$ in (\ref{NegIn}) reflects a special feature of $%
\mathbb{R}^{2}$: for any non-trivial potential $V$, the spectrum of $H_{V}$
has a negative part, no matter how small are the sums in (\ref{NegIn}). In $%
\mathbb{R}^{n}$ with $n\geq 3$, $\func{Neg}\left( V,\mathbb{R}^{n}\right) $
can be $0$ provided the integral in (\ref{CLR}) is small enough.

In fact, the quantity $\func{Neg}\left( V,\mathbb{R}^{2}\right) $ is
understood in a more general manner using the Morse index of an appropriate
energy form, rather than the operator $H_{V}$ directly (see Section \ref%
{SecGen}) so that $\func{Neg}\left( V,\mathbb{R}^{2}\right) $ always makes
sense.

\subsection{Discussion and historical remarks}

A simpler (and coarser) version of (\ref{NegIn}) is%
\begin{equation}
\func{Neg}\left( V,\mathbb{R}^{2}\right) \leq 1+C\int_{\mathbb{R}%
^{2}}V\left( x\right) \left( 1+\left\vert \ln \left\vert x\right\vert
\right\vert \right) dx+C\sum_{n\in \mathbb{Z}}B_{n}.  \label{Zn}
\end{equation}%
Indeed, if $A_{n}>c$ then $\sqrt{A_{n}}\leq c^{-1/2}A_{n}$ so that the first
sum in (\ref{NegIn}) can be replaced by $\sum_{n\in \mathbb{Z}}A_{n}$ thus
yielding (\ref{Zn}).

The estimate (\ref{Zn}) was first proved by Solomyak \cite{Solom}. In fact,
he proved a better estimate than (\ref{Zn}): 
\begin{equation}
\func{Neg}\left( V,\mathbb{R}^{2}\right) \leq 1+C\left\Vert A\right\Vert
_{1,\infty }+C\sum_{n\in \mathbb{Z}}B_{n},  \label{Solom}
\end{equation}%
where $A$ denoted the whole sequence $\left\{ A_{n}\right\} _{n\in \mathbb{Z}%
}$ and $\left\Vert A\right\Vert _{1,\infty }$ is the weak $l^{1}$-norm (the
Lorentz norm) defined by%
\begin{equation*}
\left\Vert A\right\Vert _{1,\infty }=\sup_{s>0}s\#\left\{ n:A_{n}>s\right\} .
\end{equation*}%
Clearly, $\left\Vert A\right\Vert _{1,\infty }\leq \left\Vert A\right\Vert
_{1}$ so that (\ref{Solom}) is better than (\ref{Zn}). The estimate (\ref%
{Solom}) was so far the best known\footnote{%
In fact, the estimate of \cite{Solom} is slightly sharper than (\ref{Solom})
because $B_{n}$ are defined in \cite{Solom} using not the $L^{p}$-norm but a
certain Orlicz norm. Further improvement of the term $B_{n}$ can be found in 
\cite{LapSol}. However, our main concern are the terms $A_{n}$ reflecting
the global geometry of $\mathbb{R}^{2}$.} upper bound for $\func{Neg}\left(
V,\mathbb{R}^{2}\right) $.

However, (\ref{Solom}) also follows from our estimate (\ref{NegIn}). Indeed,
it is easy to verify that%
\begin{equation*}
\left\Vert A\right\Vert _{1,\infty }\leq \sup_{s>0}s^{1/2}\sum_{\left\{
A_{n}>s\right\} }\sqrt{A_{n}}\leq 4\left\Vert A\right\Vert _{1,\infty }.
\end{equation*}%
In particular, we have%
\begin{equation*}
\sum_{\left\{ A_{n}>c\right\} }\sqrt{A_{n}}\leq 4c^{-1/2}\left\Vert
A\right\Vert _{1,\infty },
\end{equation*}%
so that (\ref{NegIn}) implies (\ref{Solom}). As we will see below, our
estimate (\ref{NegIn}) provides for certain potentials strictly better
results than (\ref{Solom}).

In the case when $V\left( x\right) $ is a radial function, that is, $V\left(
x\right) =V\left( \left\vert x\right\vert \right) $, the following estimate
was proved by Chadan, Khuri, Martin and Wu \cite{CKMW}, \cite{KMW}:%
\begin{equation}
\func{Neg}\left( V,\mathbb{R}^{2}\right) \leq 1+\int_{\mathbb{R}^{2}}V\left(
x\right) \left( 1+\left\vert \ln \left\vert x\right\vert \right\vert \right)
dx.  \label{KMW}
\end{equation}%
Although this estimate is sharper than (\ref{Zn}), we will see that our main
estimate (\ref{NegIn}) gives for certain radial potentials strictly better
results than (\ref{KMW}).

Laptev and Solomyak \cite{LapSolrad} improved (\ref{Zn}) for general
potentials by modifying the definition of $B_{n}$ so that all the terms $%
B_{n}$ vanish for radial potentials thus yielding (\ref{KMW}).

Another known estimate for $\func{Neg}\left( V,\mathbb{R}^{2}\right) $ is
due to Molchanov and Vainberg \cite{MolchVain}:%
\begin{equation}
\func{Neg}\left( V,\mathbb{R}^{2}\right) \leq 1+C\int_{\mathbb{R}%
^{2}}V\left( x\right) \ln \left\langle x\right\rangle dx+C\int_{\mathbb{R}%
^{2}}V\left( x\right) \ln \left( 2+V\left( x\right) \left\langle
x\right\rangle ^{2}\right) dx,  \label{MV}
\end{equation}%
where $\left\langle x\right\rangle =e+\left\vert x\right\vert $. However,
due to the logarithmic term in the second integral, this estimate never
leads to the semi-classical asymptotic%
\begin{equation}
\func{Neg}\left( \alpha V,\mathbb{R}^{2}\right) =O\left( \alpha \right) \ \
\ \text{as }\alpha \rightarrow \infty .  \label{a1}
\end{equation}%
that is expected to be true for \textquotedblleft nice\textquotedblright\
potentials. Note that the estimates (\ref{Zn}), (\ref{Solom}) and (\ref{MV})
are linear in $\alpha $ and, hence, imply (\ref{a1}) whenever the right hand
sides are finite.

Our main estimate (\ref{NegIn}) uses two types of quantities: $\sqrt{A_{n}}$
and $B_{n}.$ While $B_{n}\left( \alpha V\right) $ is linear in $a$, the term 
$\sqrt{A_{n}\left( \alpha V\right) }$ is \emph{sublinear }in $\alpha $,
which allows to obtain some interesting effects as $\alpha \rightarrow
\infty $ (see Section \ref{SecEx}).

Another novelty in (\ref{NegIn}) is the restriction of the both sums in (\ref%
{NegIn}) to the values $A_{n}>c$ and $B_{n}>c$, respectively. It follows
that if $A_{n}\rightarrow 0$ and $B_{n}\rightarrow 0$ then the both sums in (%
\ref{NegIn}) and, hence, $\func{Neg}\left( V,\mathbb{R}^{2}\right) $ are
finite, which does not follow from any of the previously known results. For
example, this is the case for a potential $V$ such that 
\begin{equation*}
V\left( x\right) =o\left( \frac{1}{\left\vert x\right\vert ^{2}\ln
^{2}\left\vert x\right\vert }\right) \ \ \text{as }x\rightarrow \infty .
\end{equation*}%
We discuss this and many other examples in Section \ref{SecEx}.

The nature of the terms $\sqrt{A_{n}}$ and $B_{n}$ in (\ref{NegIn}) can be
explained as follows. Different parts of the potential $V$ contribute
differently to $\func{Neg}\left( V,\mathbb{R}^{2}\right) $. The high values
of $V$ concentrated on relatively small areas contribute to $\func{Neg}%
\left( V,\mathbb{R}^{2}\right) $ via the terms $B_{n}$, while the low values
of $V$ scattered over large areas, contribute via the terms $\sqrt{A_{n}}.$
Since we integrate $V$ over long annuli, the long range effect of $V$
becomes similar to that of an one-dimensional potential. In $\mathbb{R}^{1}$
one expects 
\begin{equation*}
\func{Neg}\left( \alpha V,\mathbb{R}^{1}\right) =O\left( \sqrt{\alpha }%
\right) \ \ \text{as }\alpha \rightarrow \infty ,
\end{equation*}%
which explains the appearance of the square root in (\ref{NegIn}).

An exhaustive account of upper bounds in one-dimensional case can be found
in \cite{BLSone}, \cite{NaimSol}, \cite{RozSolSmall}. By the way, the
following estimate was proved by Naimark and Solomyak \cite{NaimSol}:%
\begin{equation}
\func{Neg}\left( V,\mathbb{R}_{+}^{1}\right) \leq 1+C\sum_{n=0}^{\infty }%
\sqrt{a_{n}},  \label{sol1}
\end{equation}%
where 
\begin{equation*}
a_{n}=\int_{I_{n}}V\left( x\right) \left( 1+\left\vert x\right\vert \right)
dx
\end{equation*}%
and $I_{n}=\left[ 2^{n-1},2^{n}\right] $ if $n>0~$and $I_{0}=[0,1].$
Clearly, the sum $\sum \sqrt{a_{n}}$ here resembles $\sum \sqrt{A_{n}}$ in (%
\ref{NegIn}), which is not a coincidence. In fact, our method allows to
improve (\ref{sol1}) by restricting the sum to $\left\{ n:a_{n}>c\right\} $.

Let us state two consequences of Theorem \ref{Tmainp}.

\begin{corollary}
\label{Coral}If%
\begin{equation}
\int_{\mathbb{R}^{2}}V\left( x\right) \left( 1+\left\vert \ln \left\vert
x\right\vert \right\vert \right) dx+\sum_{n\in \mathbb{Z}}B_{n}\left(
V\right) <\infty  \label{An+Bn}
\end{equation}%
then%
\begin{equation}
\func{Neg}\left( \alpha V,\mathbb{R}^{2}\right) \leq C\alpha \sum_{n\in 
\mathbb{Z}}B_{n}\left( V\right) +o\left( \alpha \right) \ \ \text{as\ }%
\alpha \rightarrow \infty .  \label{NVA}
\end{equation}
\end{corollary}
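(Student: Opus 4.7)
The plan is to apply Theorem \ref{Tmainp} directly to the potential $\alpha V$, exploiting the exact linear scaling $A_n(\alpha V) = \alpha A_n(V)$ and $B_n(\alpha V)=\alpha B_n(V)$ that is visible from the definitions (\ref{An}) and (\ref{Bn}). Inequality (\ref{NegIn}) then reads
\begin{equation*}
\func{Neg}(\alpha V,\mathbb{R}^2) \leq 1 + C\sqrt{\alpha}\sum_{\{n:\,A_n(V)>c/\alpha\}}\sqrt{A_n(V)} + C\alpha \sum_{\{n:\,B_n(V)>c/\alpha\}} B_n(V).
\end{equation*}
The last sum is trivially bounded by $C\alpha\sum_{n\in\mathbb{Z}}B_n(V)$, which produces the leading term in (\ref{NVA}). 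So the entire task reduces to absorbing the additive $1$ and proving that the middle term is $o(\alpha)$.

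The additive $1$ is clearly $o(\alpha)$. For the middle term, set $a_n = A_n(V)$; since the annuli $U_n$ partition $\mathbb{R}^2$ up to a null set, the first part of the hypothesis (\ref{An+Bn}) gives
\begin{equation*}
M := \sum_{n\in\mathbb{Z}} a_n = \int_{\mathbb{R}^2} V(x)(1+|\ln|x||)\,dx < \infty.
\end{equation*}
It therefore suffices to show $S_\alpha := \sum_{\{n:\,a_n>c/\alpha\}} \sqrt{a_n} = o(\sqrt{\alpha})$ as $\alpha\to\infty$.

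The key step is a standard uniform-integrability / truncation argument built around the summability of $\{a_n\}$ combined with Cauchy--Schwarz. Given $\varepsilon>0$, choose a finite set $F\subset \mathbb{Z}$ with $\sum_{n\notin F} a_n < \varepsilon$. Chebyshev's inequality gives $\#\{n\notin F:a_n>c/\alpha\} \leq \alpha M /c$, so by Cauchy--Schwarz,
\begin{equation*}
\sum_{\substack{n\notin F\\ a_n>c/\alpha}} \sqrt{a_n} \;\leq\; \bigl(\#\{n\notin F: a_n>c/\alpha\}\bigr)^{1/2}\Bigl(\sum_{n\notin F} a_n\Bigr)^{1/2} \;\leq\; \sqrt{\alpha M /c}\,\sqrt{\varepsilon}.
\end{equation*}
The remaining contribution from $n\in F$ is bounded by the fixed constant $|F|\sqrt{M}$. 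Dividing by $\sqrt{\alpha}$ and sending $\alpha\to\infty$ yields $\limsup_{\alpha\to\infty} S_\alpha/\sqrt{\alpha} \leq \sqrt{M\varepsilon/c}$, and since $\varepsilon$ is arbitrary, $S_\alpha = o(\sqrt{\alpha})$. Substituting back gives $\func{Neg}(\alpha V,\mathbb{R}^2) \leq C\alpha\sum_n B_n(V) + o(\alpha)$, which is (\ref{NVA}).

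I do not expect any serious obstacle: the scaling behaviour of $A_n$ and $B_n$ is built into their definitions, the summability needed to pass from $A$ to $\sum\sqrt{A_n(\alpha V)} = o(\alpha)$ is precisely what is hypothesized, and the only non-cosmetic move is the truncation step above, which is a routine consequence of $\sum_n a_n<\infty$. The mild subtlety worth noting is that the restriction of the first sum in (\ref{NegIn}) to indices with $A_n>c$ is essential here — without it one would only obtain $O(\alpha)$ rather than $o(\alpha)$, since the bare sum $\sum_n\sqrt{A_n(\alpha V)}=\sqrt\alpha\sum_n\sqrt{a_n}$ need not even be finite.
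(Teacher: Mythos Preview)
Your proof is correct and is actually cleaner than the paper's own argument. You work directly from the statement of Theorem \ref{Tmainp} and dispose of the $A_n$-sum by an elementary truncation plus Cauchy--Schwarz, exploiting only $\sum_n A_n(V)<\infty$.

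The paper takes a different route: it goes back inside the proof of Theorem \ref{Tmainp} to extract the sharper inequality (\ref{NegIn'}), in which the first sum involves $A_n(V')$ for a modified potential $V'$ that vanishes on the annuli $W_n$ with $B_n(\alpha V)>c$. That sum is then coarsened to $C\alpha\int V\mathbf{1}_{W(\alpha)}(1+|\ln|x||)\,dx$, and the integral is shown to tend to $0$ by dominated convergence as the set $W(\alpha)$ shrinks. So the paper trades the $A_n$-contribution against the $B_n$-contribution via the refined estimate, whereas you handle the $A_n$-sum on its own merits. Your argument has the advantage of treating Theorem \ref{Tmainp} as a black box; the paper's approach, on the other hand, illustrates how the interplay between the $A_n$ and $B_n$ terms built into (\ref{NegIn'}) can be exploited, which may be useful in other contexts. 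One cosmetic remark: your Chebyshev bound $\#\{n\notin F:a_n>c/\alpha\}\le \alpha M/c$ could be sharpened to $\alpha\varepsilon/c$, but the weaker form you wrote is of course sufficient.
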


\begin{corollary}
\label{CorW}Assume that $\mathcal{W}\left( r\right) $ is a positive monotone
increasing function on $(0,+\infty )$ that satisfies the following Dini type
condition both at $0$ and at $\infty $:%
\begin{equation}
\int_{0}^{\infty }\frac{r\left\vert \ln r\right\vert ^{\frac{p}{p-1}}dr}{%
\mathcal{W}\left( r\right) ^{\frac{1}{p-1}}}<\infty .  \label{W4}
\end{equation}%
Then 
\begin{equation}
\func{Neg}\left( V,\mathbb{R}^{2}\right) \leq 1+C\left( \int_{\mathbb{R}%
^{2}}V^{p}\left( x\right) \mathcal{W}\left( \left\vert x\right\vert \right)
dx\right) ^{1/p},  \label{NegW}
\end{equation}%
where the constant $C$ depends on $p$ and $\mathcal{W}$.
\end{corollary}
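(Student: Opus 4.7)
The plan is to derive (\ref{NegW}) from Theorem \ref{TWDini} by showing that each of the two sums on the right-hand side of (\ref{NegIn}) is bounded by a constant, depending only on $p$ and $\mathcal{W}$, times $I^{1/p}$, where $I := \int_{\mathbb{R}^{2}} V^{p}(x) \mathcal{W}(|x|)\, dx$. Set $q := p/(p-1)$ and put $\nu_{n} := \int_{U_{n}} V^{p}\mathcal{W}(|x|)\,dx$ and $\mu_{n} := \int_{W_{n}} V^{p}\mathcal{W}(|x|)\,dx$; since $\{U_{n}\}$ and $\{W_{n}\}$ both partition $\mathbb{R}^{2}\setminus\{0\}$, one has $\sum_{n}\nu_{n} = \sum_{n}\mu_{n} = I$. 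The argument uses H\"older's inequality twice: first inside the integrals defining $A_{n}$ and $B_{n}$, then over the index $n$.

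H\"older with weight $\mathcal{W}^{1/p}$ split off from $V$ yields
\begin{equation*}
A_{n}\leq \nu_{n}^{1/p}\alpha_{n},\qquad \alpha_{n} := \left(\int_{U_{n}}\frac{(1+|\ln|x||)^{q}}{\mathcal{W}(|x|)^{1/(p-1)}}\,dx\right)^{1/q},
\end{equation*}
while monotonicity of $\mathcal{W}$ gives $|x|^{2(p-1)}/\mathcal{W}(|x|)\leq e^{2(n+1)(p-1)}/\mathcal{W}(e^{n})$ on $W_{n}$, hence
\begin{equation*}
B_{n}\leq M_{n}\mu_{n}^{1/p},\qquad M_{n} := \frac{e^{2(n+1)(p-1)/p}}{\mathcal{W}(e^{n})^{1/p}}.
\end{equation*}
The heart of the proof is to show that both $S_{A} := \sum_{n}\alpha_{n}^{q}$ and $S_{B} := \sum_{n}M_{n}^{q}$ are finite, with bounds depending only on $p$ and $\mathcal{W}$. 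For $S_{A}$ this is straightforward: the sum telescopes to an integral over $\mathbb{R}^{2}\setminus\{0\}$, which in polar coordinates equals $2\pi\int_{0}^{\infty}(1+|\ln r|)^{q}\,r\,\mathcal{W}(r)^{-1/(p-1)}\,dr$, finite by (\ref{W4}) (near $r=1$ the term $(1+|\ln r|)^{q}$ is bounded and $\mathcal{W}(r)$ is bounded below by monotonicity, so that region contributes only a constant).

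The estimate for $S_{B}$ is the main technical point: $M_{n}^{q} = e^{2(n+1)}/\mathcal{W}(e^{n})^{1/(p-1)}$ is missing exactly the $|n|^{q}$ factor that appears in the Dini integrand. To manufacture it, compare $M_{n}^{q}$ with an integral of the Dini integrand: for every $|n|\geq 2$, monotonicity of $\mathcal{W}$ gives $\mathcal{W}(e^{t})\leq \mathcal{W}(e^{n})$ on $[n-1,n]$, while $|t|\geq |n|-1$ and $e^{2t}\geq e^{2(n-1)}$ there, so
\begin{equation*}
\int_{n-1}^{n}\frac{e^{2t}|t|^{q}}{\mathcal{W}(e^{t})^{1/(p-1)}}\,dt\;\geq\;\frac{(|n|-1)^{q}\,e^{2(n-1)}}{\mathcal{W}(e^{n})^{1/(p-1)}}\;=\;\frac{(|n|-1)^{q}}{e^{4}}\,M_{n}^{q}.
\end{equation*}
Since $(|n|-1)^{-q}\leq 1$ for $|n|\geq 2$ and the intervals $[n-1,n]$ are essentially disjoint, summation yields $\sum_{|n|\geq 2}M_{n}^{q}\leq e^{4}\int_{\mathbb{R}}\frac{e^{2t}|t|^{q}}{\mathcal{W}(e^{t})^{1/(p-1)}}\,dt$, which after the substitution $r=e^{t}$ is a constant multiple of the Dini integral (\ref{W4}). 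The finitely many remaining terms $|n|\leq 1$ contribute a constant depending on $\mathcal{W}(e^{-1}),\mathcal{W}(1),\mathcal{W}(e)$.

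To conclude, I sum using H\"older in $n$, exploiting the restrictions in (\ref{NegIn}). For $B_{n}$,
\begin{equation*}
\sum_{\{B_{n}>c\}}B_{n}\;\leq\;\sum_{n}M_{n}\mu_{n}^{1/p}\;\leq\;S_{B}^{1/q}\,I^{1/p}.
\end{equation*}
For $A_{n}$, the restriction $A_{n}>c$ permits replacing $\sqrt{A_{n}}$ by $c^{-1/2}A_{n}$, giving
\begin{equation*}
\sum_{\{A_{n}>c\}}\sqrt{A_{n}}\;\leq\;c^{-1/2}\sum_{n}A_{n}\;\leq\;c^{-1/2}\sum_{n}\nu_{n}^{1/p}\alpha_{n}\;\leq\;c^{-1/2}\,S_{A}^{1/q}\,I^{1/p}.
\end{equation*}
Inserting both bounds into (\ref{NegIn}) produces (\ref{NegW}) with $C=C(p,\mathcal{W})$. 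The main obstacle in the plan is the discrete-to-integral comparison establishing $S_{B}<\infty$: unlike $S_{A}$, this is not a direct polar-coordinate rewriting of (\ref{W4}), and it is the step where the $|n|^{q}$ factor in the Dini weight gets essentially used.
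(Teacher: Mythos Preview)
Your proof is correct and follows essentially the same route as the paper: both reduce (\ref{NegIn}) to (\ref{Zn}) via $\sqrt{A_n}\le c^{-1/2}A_n$, then bound $\sum_n A_n$ and $\sum_n B_n$ by H\"older with weight $\mathcal{W}^{1/p}$, the key point being finiteness of $\sum_n e^{2(n+1)}/\mathcal{W}(e^n)^{1/(p-1)}$. The paper handles this last sum more directly than you do, simply noting that it is $\simeq \int_0^\infty r\,\mathcal{W}(r)^{-1/(p-1)}\,dr$, which is dominated by the Dini integral (\ref{W4}) away from $r=1$ (where $|\ln r|\ge 1$) and trivially finite near $r=1$; your interval-by-interval comparison with the full Dini integrand works but is more effort than needed.
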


Here is an example of a weight function $\mathcal{W}\left( r\right) $ that
satisfies (\ref{W4}):%
\begin{equation}
\mathcal{W}\left( r\right) =r^{2\left( p-1\right) }\langle \ln r\rangle
^{2p-1}\ln ^{p-1+\varepsilon }\langle \ln r\rangle ,  \label{Psidef}
\end{equation}%
where $\varepsilon >0$. In particular, for $p=2$, (\ref{NegW}) becomes%
\begin{equation}
\func{Neg}\left( V,\mathbb{R}^{2}\right) \leq 1+C\left( \int_{\mathbb{R}%
^{2}}V^{2}\left( x\right) \left\vert x\right\vert ^{2}\langle \ln \left\vert
x\right\vert \rangle ^{3}\ln ^{1+\varepsilon }\langle \ln \left\vert
x\right\vert \rangle dx\right) ^{1/2}.  \label{Neg2}
\end{equation}

Let us emphasize once again that none of the above mentioned estimates (\ref%
{Zn}), (\ref{Solom}), (\ref{KMW}), (\ref{MV}), (\ref{NegW}) matches the full
strength of our main estimate (\ref{NegIn}) even for radial potentials as
will be seen on examples below.

\subsection{Outline of the paper}

\label{here copy(3)}

Our method of the proof of Theorem \ref{Tmainp} is significantly different
from other existing methods of estimating $\func{Neg}\left( V,\mathbb{R}%
^{n}\right) $ and uses the advantages of $\mathbb{R}^{2}$ such as the
presence of a large class of conformal mappings preserving the Dirichlet
integral. Let us briefly describe the structure of paper that matches the
flowchart of the proof.

In Section \ref{SecEx} we give examples of application of Theorem \ref%
{Tmainp}.

In Section \ref{SecGen} we define for any open set $\Omega \subset \mathbb{R}%
^{2}$ the quantity $\func{Neg}\left( V,\Omega \right) $ as the Morse index
of the quadratic form 
\begin{equation*}
\mathcal{E}_{V,\Omega }\left( u\right) =\int_{\Omega }\left\vert \nabla
u\right\vert ^{2}dx-\int_{\Omega }Vu^{2}dx,
\end{equation*}%
and prove various properties of the former including subadditivity with
respect to partitioning and the behavior under conformal and bilipschitz
mappings. For bounded domains $\Omega $ with smooth boundary, $\func{Neg}%
\left( V,\Omega \right) $ coincides with the number of non-positive
eigenvalues of the Neumann problem for $-\Delta -V$ in $\Omega $.

The main result of Section \ref{SecBdd} is Lemma \ref{LemMain} that provides
the following estimate for a unit square $Q$:%
\begin{equation}
\func{Neg}\left( {V,Q}\right) \leq 1+C\left\Vert V\right\Vert _{L^{p}\left(
Q\right) }.  \label{L1}
\end{equation}%
The proof involves a careful partitioning of $Q$ into tiles $\Omega
_{1},...,\Omega _{N}$ with small enough $\left\Vert V\right\Vert
_{L^{p}\left( \Omega _{n}\right) }$ so that $\func{Neg}\left( V,\Omega
_{n}\right) =1$. The main difficulty is to control the number $N$ of the
tiles, which yields then (\ref{L1}). While the number of those $\Omega _{n}$
where $\left\Vert V\right\Vert _{L^{p}\left( \Omega _{n}\right) }$ is large
enough can be controlled via $\left\Vert V\right\Vert _{L^{p}\left( Q\right)
}$, the tiles $\Omega _{n}$ with small values of $\left\Vert V\right\Vert
_{L^{p}\left( \Omega _{n}\right) }$ are controlled inductively using special
features of the partitioning.

This argument is reminiscent of the Calderon-Zygmund decomposition (cf. \cite%
{BirSolTr}, \cite{Fefferman}, \cite{MelRoz}, \cite{RozenVUZ}), but is
simpler because we do not restrict the shape of the tiles to squares.

The estimate (\ref{L1}) leads in the end to the terms $B_{n}$ in (\ref{NegIn}%
) reflecting the local properties of the potential.

In Section \ref{SecR2one} we make the first step towards the global
properties of $V.$ Our starting point is the Green function $g\left(
x,y\right) $ of the operator $H_{0}=-\Delta +V_{0}$ where $V_{0}\in
C_{0}^{\infty }\left( \mathbb{R}^{2}\right) $ is a fixed potential for which 
$\func{Neg}\left( V_{0},\mathbb{R}^{2}\right) =1.$ We use the following
estimate of $g\left( x,y\right) $ that was proved in \cite{GrigW}:%
\begin{equation*}
g\left( x,y\right) \simeq \ln \left\langle x\right\rangle \wedge \ln
\left\langle y\right\rangle +\ln _{+}\frac{1}{\left\vert x-y\right\vert }.
\end{equation*}%
Considering the integral operator%
\begin{equation*}
G_{V}f\left( x\right) =\int_{\mathbb{R}^{2}}g\left( x,y\right) f\left(
y\right) V\left( y\right) dy
\end{equation*}%
acting in $L^{2}\left( Vdx\right) $, we show first that%
\begin{equation*}
\left\Vert G_{V}\right\Vert \leq \frac{1}{2}\Rightarrow \func{Neg}\left( V,%
\mathbb{R}^{2}\right) =1
\end{equation*}%
(Corollary \ref{CorR2}). Hence, to characterize the potentials $V$ with $%
\func{Neg}\left( V,\mathbb{R}^{2}\right) =1$ it suffices to estimate the
norm of $G_{V}$. Using the conformal mapping $z\mapsto \ln z$, we translate
the problem to a simpler integral operator $\Gamma _{V}$ acting in a strip%
\begin{equation*}
S=\left\{ \left( x_{1},x_{2}\right) \in \mathbb{R}^{2}:x_{1}\in \mathbb{R},\
0<x_{2}<\pi \right\} .
\end{equation*}

In Section \ref{SecNorm} we estimate the norm of a certain integral operator
in $S$ using a weighted Hardy inequality (Lemma \ref{Leman}).

In Section \ref{SecW} we obtain an estimate of $\left\Vert \Gamma
_{V}\right\Vert $ (Lemma \ref{LemGaV}) that leads to conditions for $\func{%
Neg}\left( V,S\right) =1$ (Proposition \ref{Pab}). A number of further
steps, involving a careful partitioning of the strip into rectangles, is
needed to obtain an upper bound for $\func{Neg}\left( V,S\right) \ $that is
stated in Theorem \ref{TStrip} and that is interesting on its own right.

In the final Section \ref{SecProof} we translate the estimate for $\func{Neg}%
\left( V,S\right) $ into that for $\func{Neg}\left( V,\mathbb{R}^{2}\right) $
thus finishing the proof of Theorem \ref{Tmainp}.

\begin{acknowledgement}
The first named author thanks Stanislav Molchanov for bringing this problem
to his attention and for fruitful discussions. The authors are indebted to
Ari Laptev, Grigori Rozenblum, and Michail Solomyak for useful remarks that
led to significant improvement of the results. They also thank Eugene
Shargorodsky for interesting comments.

This work was partially done during the visits of the second named author to
University of Bielefeld and of the first named author to Chinese University
of Hong Kong. The support of SFB 701 of the German Research Council and of a
visiting grant of CUHK is gratefully acknowledged.
\end{acknowledgement}

\section{Examples}

\label{SecEx}\setcounter{equation}{0}Let $V$ be a potential in $\mathbb{R}%
^{2}$, and let us use the abbreviation $\func{Neg}\left( V\right) \equiv 
\func{Neg}\left( V,\mathbb{R}^{2}\right) .$ We write $f\simeq g$ if the
ratio $\frac{f}{g}$ is bounded between two positive constants.

\begin{case}[1. ]
Assume that, for all $x\in \mathbb{R}^{2}$, 
\begin{equation*}
V\left( x\right) \leq \frac{\alpha }{\left\vert x\right\vert ^{2}}
\end{equation*}%
for a small enough positive constant $\alpha $. Then, for all $n\in \mathbb{Z%
}$,%
\begin{equation*}
B_{n}\leq \alpha \left( \int_{e^{n}}^{e^{n+1}}\frac{1}{r^{2p}}r^{2\left(
p-1\right) }2\pi rdx\right) ^{1/p}\simeq \alpha
\end{equation*}%
so that $B_{n}<c$ and the last sum in (\ref{NegIn}) is void, whence we
obtain 
\begin{eqnarray}
\func{Neg}\left( V\right) &\leq &1+C\sum_{\left\{ n:A_{n}>c\right\} }\sqrt{%
A_{n}}  \label{Nln1} \\
&\leq &1+C\int_{\mathbb{R}^{2}}V\left( x\right) \left( 1+\left\vert \ln
\left\vert x\right\vert \right\vert \right) dx.  \label{Nln}
\end{eqnarray}%
The estimate (\ref{Nln}) in this case follows also from (\ref{MV}).
\end{case}

\begin{case}[2. ]
Consider a potential%
\begin{equation*}
V\left( x\right) =\frac{1}{\left\vert x\right\vert ^{2}\left( 1+\ln
^{2}\left\vert x\right\vert \right) },
\end{equation*}%
As in the first example, $B_{n}\simeq 1$, while $A_{n}$ can be computed as
follows: for $n\geq 1$ 
\begin{equation}
A_{n}=\int_{e^{2^{n-1}}}^{e^{2^{n}}}\frac{1}{r^{2}\left( 1+\ln ^{2}r\right) }%
\left( 1+\ln r\right) 2\pi rdr\simeq 1,  \label{An=al}
\end{equation}%
and the same estimate holds for $n\leq 0$. Hence, if $\alpha >0$ is small
enough then $A_{n}\left( \alpha V\right) $ and $B_{n}\left( \alpha V\right) $
are smaller than $c$ for all $n$, and the both sums in (\ref{NegIn}) are
void. It follows that 
\begin{equation*}
\func{Neg}\left( \alpha V\right) =1.
\end{equation*}%
This result cannot be obtained by any of the previously known estimates.
Indeed, in the estimates (\ref{KMW}) and (\ref{MV}) the integral $\int_{%
\mathbb{R}^{2}}V\left( x\right) \left( 1+\left\vert \ln \left\vert
x\right\vert \right\vert \right) dx$ diverges, and in the estimate (\ref%
{Solom}) of Solomyak one has $\left\Vert A\right\Vert _{1,\infty }=\infty $.
As will be shown below, if $\alpha >1/4$ then $\func{Neg}\left( \alpha
V\right) =\infty $. Hence, $\func{Neg}\left( \alpha V\right) $ exhibits a
non-linear behavior with respect to the parameter $\alpha $, which cannot be
captured by linear estimates.
\end{case}

\begin{case}[3. ]
Assume that $V\left( x\right) $ is locally bounded and 
\begin{equation}
V\left( x\right) =o\left( \frac{1}{\left\vert x\right\vert ^{2}\ln
^{2}\left\vert x\right\vert }\right) \ \ \text{as }x\rightarrow \infty .
\label{V22}
\end{equation}%
Similarly to the previous example, we see that $A_{n}\left( V\right)
\rightarrow 0$ and $B_{n}\left( V\right) \rightarrow 0$ as $n\rightarrow
\infty $, which implies that the both sums in (\ref{NegIn}) are finite and,
hence, 
\begin{equation*}
\func{Neg}\left( V\right) <\infty .
\end{equation*}%
This result is also new.
\end{case}

\begin{case}[4. ]
Choose $q>0$ and set 
\begin{equation}
V\left( x\right) =\frac{1}{\left\vert x\right\vert ^{2}\ln ^{2}\left\vert
x\right\vert \left( \ln \ln \left\vert x\right\vert \right) ^{q}}\ \ \text{%
for }\left\vert x\right\vert >e^{2}  \label{V22q}
\end{equation}%
and $V\left( x\right) =0$ for $\left\vert x\right\vert \leq e^{2}.$ Then $%
A_{n}=0$ for $n\leq 1$, while for $n\geq 2$ we obtain%
\begin{equation*}
A_{n}\left( V\right) =\int_{e^{2^{n-1}}}^{e^{2^{n}}}\frac{\left( 1+\ln
r\right) 2\pi rdr}{r^{2}\ln ^{2}r\left( \ln \ln r\right) ^{q}}\simeq \frac{1%
}{n^{q}}.
\end{equation*}%
Similarly, we have for $n\geq 2$%
\begin{equation*}
B_{n}\left( V\right) =\left( \int_{e^{n}}^{e^{n+1}}\frac{r^{2\left(
p-1\right) }2\pi rdr}{\left[ r^{2}\ln ^{2}r\left( \ln \ln r\right) ^{q}%
\right] ^{p}}\right) ^{1/p}\simeq \frac{1}{n^{2}\ln ^{q}n}.
\end{equation*}%
Let $\alpha $ be a large real parameter. Then 
\begin{equation}
A_{n}\left( \alpha V\right) \simeq \frac{\alpha }{n^{q}},  \label{Anq}
\end{equation}%
and the condition $A_{n}\left( \alpha V\right) >c$ is satisfied for $n\leq
C\alpha ^{1/q},$ whence we obtain%
\begin{equation*}
\sum_{\left\{ A_{n}\left( \alpha V\right) >c\right\} }\sqrt{A_{n}\left(
\alpha V\right) }\leq C\sum_{n=1}^{\lceil C\alpha ^{1/q}\rceil }\sqrt{\frac{%
\alpha }{n^{q}}}\simeq C\sqrt{\alpha }\left( \alpha ^{1/q}\right)
^{1-q/2}=C\alpha ^{1/q}.
\end{equation*}%
It is clear that $\sum_{n}B_{n}\left( \alpha V\right) \simeq \alpha $.
Hence, we obtain from (\ref{NegIn})%
\begin{equation*}
\func{Neg}\left( \alpha V\right) \leq C\left( \alpha ^{1/q}+\alpha \right) .
\end{equation*}%
If $q\geq 1$ then the leading term here is $\alpha $. Combining this with (%
\ref{Neglow}), we obtain 
\begin{equation*}
\func{Neg}\left( \alpha V\right) \simeq \alpha \ \ \text{as\ }\alpha
\rightarrow \infty .
\end{equation*}%
If $q<1$ then the leading term is $\alpha ^{1/q},$ and we obtain 
\begin{equation*}
\func{Neg}\left( \alpha V\right) \leq C\alpha ^{1/q}.
\end{equation*}%
Birman and Laptev \cite{BirLap} proved that, in this case, indeed, 
\begin{equation*}
\func{Neg}\left( \alpha V\right) \sim \func{const}\alpha ^{1/q}\ \ \text{as\ 
}\alpha \rightarrow \infty .
\end{equation*}%
In the case $q<1$ we have $\left\Vert A\right\Vert _{1,\infty }=\infty $,
and neither of the estimates (\ref{Zn}), (\ref{KMW}), (\ref{Solom}), (\ref%
{MV}), (\ref{NegW}) yields even the finiteness of $\func{Neg}\left( \alpha
V\right) ,$ leaving alone the correct rate of growth in $\alpha $.
\end{case}

\begin{case}[5. ]
Let us study the behavior of $\func{Neg}\left( \alpha V\right) $ as $\alpha
\rightarrow \infty $ for a potential $V$ such that%
\begin{equation}
\int_{\mathbb{R}^{2}}V\left( x\right) \left( 1+\left\vert \ln \left\vert
x\right\vert \right\vert \right) dx+\sum_{n\in \mathbb{Z}}B_{n}\left(
V\right) <\infty .  \label{A+B}
\end{equation}%
By Corollary \ref{Coral} and (\ref{Neglow}), we obtain 
\begin{equation}
c\alpha \int_{\mathbb{R}^{2}}Vdx\leq \func{Neg}\left( \alpha V\right) \leq
C\alpha \sum_{n\in \mathbb{Z}}B_{n}\left( V\right) +o\left( \alpha \right)
,\ \ \alpha \rightarrow \infty ,  \label{aa}
\end{equation}%
in particular, $\func{Neg}\left( \alpha V\right) \simeq \alpha $. If $V$
satisfies in addition the following condition: 
\begin{equation}
\sup_{W_{n}}V\simeq \inf_{W_{n}}V,  \label{K}
\end{equation}%
for all $n\in \mathbb{Z}$, then 
\begin{equation*}
B_{n}\left( V\right) \simeq \int_{W_{n}}Vdx,
\end{equation*}%
and (\ref{aa}) implies that%
\begin{equation}
\func{Neg}\left( \alpha V\right) \simeq \alpha \int_{\mathbb{R}^{2}}V\left(
x\right) dx\ \ \text{as }\alpha \rightarrow \infty .  \label{Nai}
\end{equation}%
For example, (\ref{A+B}) and, hence, (\ref{Nai}) are satisfied for the
potential (\ref{V22q}) with $q>1$. The exact asymptotic for $\func{Neg}%
\left( \alpha V\right) $ as $\alpha \rightarrow \infty $ was obtained by
Birman and Laptev \cite{BirLap}.
\end{case}

\begin{case}[6.]
Set $R=e^{2^{m}}$ where $m$ is a large integer and consider the following
potential on $\mathbb{R}^{2}$ 
\begin{equation*}
V\left( x\right) =\left\{ 
\begin{array}{ll}
\frac{\alpha }{\left\vert x\right\vert ^{2}\ln ^{2}\left\vert x\right\vert },
& \text{if }e<\left\vert x\right\vert <R, \\ 
0, & \text{otherwise,}%
\end{array}%
\right.
\end{equation*}%
where $\alpha >\frac{1}{4}$. Computing $A_{n}\left( V\right) $ as in (\ref%
{An=al}) we obtain $A_{n}\left( V\right) \simeq \alpha $ for any $1\leq
n\leq m$, and $A_{n}=0$ otherwise, whence it follows that%
\begin{equation*}
\sum_{n\in \mathbb{Z}}\sqrt{A_{n}\left( V\right) }\simeq \sqrt{\alpha }%
m\simeq \sqrt{\alpha }\ln \ln R.
\end{equation*}%
Similarly, we have, for $1\leq n<2^{m},$%
\begin{equation*}
B_{n}\left( V\right) =\left( \int_{e^{n}}^{e^{n+1}}\left[ \frac{\alpha }{%
r^{2}\ln ^{2}r}\right] ^{p}r^{2\left( p-1\right) }2\pi rdr\right)
^{1/p}\simeq \frac{\alpha }{n^{2}},
\end{equation*}%
and $B_{n}\left( V\right) =0$ otherwise, whence%
\begin{equation*}
\sum_{n\in \mathbb{Z}}B_{n}\left( V\right) \simeq \sum_{n=1}^{2^{m}-1}\frac{%
\alpha }{n^{2}}\simeq \alpha .
\end{equation*}%
By (\ref{NegIn}) we obtain%
\begin{equation}
\func{Neg}\left( V\right) \leq C\sqrt{\alpha }\ln \ln R+C\alpha .
\label{Nara}
\end{equation}%
Let us remark that none of the previously known general estimates for $\func{%
Neg}\left( V,\mathbb{R}^{2}\right) $ yields (\ref{Nara}). For example, both (%
\ref{Solom}) and (\ref{KMW}) give in this case a weaker estimate%
\begin{equation*}
\func{Neg}\left( V\right) \leq C\alpha \ln \ln R.
\end{equation*}%
Obviously, (\ref{Nara}) requires a full strength of (\ref{NegIn}).

Let us estimate $\func{Neg}\left( V\right) $ from below to show the
sharpness of (\ref{Nara}) with respect to the parameters $\alpha ,R$.
Consider the function%
\begin{equation*}
f\left( x\right) =\sqrt{\ln \left\vert x\right\vert }\sin \left( \sqrt{%
\alpha -\frac{1}{4}}\ln \ln \left\vert x\right\vert \right)
\end{equation*}%
that satisfies in the region $\Omega =\left\{ e<\left\vert x\right\vert
<R\right\} $ the differential equation $\Delta f+V\left( x\right) f=0.$ For
any positive integer $k$, function $f$ does not change sign in the rings 
\begin{equation*}
\Omega _{k}:=\left\{ x\in \mathbb{R}^{2}:\pi k<\sqrt{\alpha -\frac{1}{4}}\ln
\ln \left\vert x\right\vert <\pi \left( k+1\right) \right\}
\end{equation*}%
and vanishes on $\partial \Omega _{k}$ as long as $\Omega _{k}\subset \Omega
.$ Since $\mathcal{E}_{V,\Omega _{k}}\left( f\right) =0$, using $f|_{\Omega
_{k}}$ as test functions for the energy functional, we obtain $\func{Neg}%
\left( V\right) \geq N$ where $N$ is the number of the rings $\Omega _{k}$
inside $\Omega .$ Assuming that $\alpha >>\frac{1}{4}$, we see that $N\simeq 
\sqrt{\alpha }\ln \ln R$, whence it follows that%
\begin{equation*}
\func{Neg}\left( V\right) \geq c\sqrt{\alpha }\ln \ln R.
\end{equation*}%
On the other hand, (\ref{Neglow}) yields $\func{Neg}\left( V\right) \geq
c\alpha $. Combining these two estimates, we obtain the lower bound%
\begin{equation*}
\func{Neg}\left( V\right) \geq c\left( \sqrt{\alpha }\ln \ln R+\alpha
\right) ,
\end{equation*}%
that matches the upper bound (\ref{Nara}).
\end{case}

\begin{case}[7.]
\label{Exp1}This example is of a different nature. Let us show that no
estimate of the type%
\begin{equation*}
\func{Neg}\left( V\right) \leq \func{const}+\int_{\mathbb{R}^{2}}V\left(
x\right) \mathcal{W}\left( x\right) dx
\end{equation*}%
can be true, provided a weight function $\mathcal{W}$ is bounded in a
neighborhood of at least one point. Indeed, assume without loss of
generality that $\mathcal{W}\left( x\right) \leq C$ for $\left\vert
x\right\vert <\varepsilon .$ We will construct a potential $V$ supported in $%
\left\{ \left\vert x\right\vert <\varepsilon \right\} $ such that $\int_{%
\mathbb{R}^{2}}Vdx<\infty $ while $\func{Neg}\left( V\right) =\infty .$

It will be easier to construct $V$ as a measure but then it can be routinely
approximated by a $L_{loc}^{1}$-function. For any $r>0$, let $S_{r}$ be the
circle $\left\{ \left\vert x\right\vert =r\right\} .$ We will use the
measure $\delta _{S_{r}}$ supported on $S_{r}$. Given two sequences $\left\{
a_{n}\right\} $ and $\left\{ b_{n}\right\} $ of reals such that $%
0<a_{n}<b_{n}$, consider the measures%
\begin{equation*}
V_{n}=\frac{1}{a_{n}\ln \frac{b_{n}}{a_{n}}}\delta _{S_{a_{n}}}
\end{equation*}%
and test functions%
\begin{equation}
\varphi _{n}\left( x\right) =\left\{ 
\begin{array}{ll}
1, & \left\vert x\right\vert <a_{n}, \\ 
\frac{\ln \frac{b_{n}}{\left\vert x\right\vert }}{\ln \frac{b_{n}}{a_{n}}},
& a_{n}\leq \left\vert x\right\vert \leq b_{n}, \\ 
0, & \left\vert x\right\vert >b_{n}.%
\end{array}%
\right.  \label{fiab}
\end{equation}%
An easy computation shows that%
\begin{equation}
\int_{\mathbb{R}^{2}}\left\vert \nabla \varphi _{n}\right\vert ^{2}dx=\frac{%
2\pi }{\ln \frac{b_{n}}{a_{n}}}  \label{Efi}
\end{equation}%
and%
\begin{equation*}
\int_{\mathbb{R}^{2}}\varphi _{n}^{2}V_{n}dx=\int_{\mathbb{R}^{2}}V_{n}dx=%
\frac{2\pi }{\ln \frac{b_{n}}{a_{n}}},
\end{equation*}%
whence it follows that $\mathcal{E}_{Vn}\left( \varphi _{n}\right) =0.$

Let us now specify $a_{n}=4^{-n^{3}}$ and $b_{n}=2^{-n^{3}}.$ Consider also
the following sequence of points in $\mathbb{R}^{2}$: $y_{n}=\left(
4^{-n},0\right) $. Then all disks $D_{b_{n}}\left( y_{n}\right) $ with large
enough $n$ are disjoint and 
\begin{equation}
\sum_{n=1}^{\infty }\frac{2\pi }{\ln \frac{b_{n}}{a_{n}}}<\infty .  \label{<}
\end{equation}%
Consider the generalized function 
\begin{equation}
V=\sum_{n=N}^{\infty }V\left( \cdot -y_{n}\right) .  \label{-yn}
\end{equation}%
The functions $\psi _{n}=$ $\varphi _{n}\left( \cdot -y_{n}\right) $ have
disjoint supports and satisfy $\mathcal{E}_{V}\left( \psi _{n}\right) =0$
for all $n\geq N$, whence it follows that $\func{Neg}\left( V\right) =\infty
.$ On the other hand, by (\ref{<}) we have%
\begin{equation*}
\int_{\mathbb{R}^{2}}Vdx<\infty .
\end{equation*}%
By taking $N$ large enough, one can make $\int_{\mathbb{R}^{2}}Vdx$
arbitrarily small and $\limfunc{supp}V$ to be located in an arbitrarily
small neighborhood of the origin, while still having $\func{Neg}\left(
V\right) =\infty .$
\end{case}

\section{Generalities of counting functions}

\setcounter{equation}{0}\label{SecGen}

\subsection{Index of quadratic forms}

Let $\Omega \subset \mathbb{R}^{2}$ be an arbitrary open set. By a \emph{%
potential} in $\Omega \subset \mathbb{R}^{n}$ we mean always a non-negative
function from $L_{loc}^{1}\left( \Omega \right) $. Given a potential $V$ in $%
\Omega ,$ define the energy form%
\begin{equation}
\mathcal{E}_{V,\Omega }\left( f\right) =\int_{\Omega }\left\vert \nabla
f\right\vert ^{2}dx-\int_{\Omega }Vf^{2}dx  \label{Edef}
\end{equation}%
in the domain%
\begin{equation}
\mathcal{F}_{V,\Omega }=\left\{ f\in L_{loc}^{2}\left( \Omega \right)
:\int_{\Omega }\left\vert \nabla f\right\vert ^{2}dx<\infty ,\ \
\int_{\Omega }Vf^{2}dx<\infty \right\} .  \label{Fdef}
\end{equation}%
Clearly, $\mathcal{F}_{V,\Omega }$ is a linear space. Note that a more
conventional choice for the ambient space for $\mathcal{F}_{V,\Omega }$
would be $L^{2}\left( \Omega \right) $, but for us a larger space $%
L_{loc}^{2}\left( \Omega \right) $ will be more convenient.

Set%
\begin{equation}
\func{Neg}\left( V,\Omega \right) :=\sup \left\{ \dim \mathcal{V}:\mathcal{V}%
\prec \mathcal{F}_{V,\Omega }:\mathcal{E}_{V,\Omega }\left( f\right) \leq 0%
\text{ for all }f\in \mathcal{V}\right\} ,  \label{negdef}
\end{equation}%
where $\mathcal{V}\prec \mathcal{F}_{V,\Omega }$ means that $\mathcal{V}$ is
a linear subspace of $\mathcal{F}_{V,\Omega }$, and the supremum of $\dim 
\mathcal{V}$ is taken over all subspaces $\mathcal{V}$ such that $\mathcal{E}%
_{V,\Omega }\leq 0$ on $\mathcal{V}$. In other words, $\func{Neg}\left(
V,\Omega \right) $ is the Morse index of the quadratic form $\mathcal{E}%
_{V,\Omega }$ in $\mathcal{F}_{V,\Omega }$. Observe that one can restrict in
(\ref{negdef}) the class of subspaces $\mathcal{V}$ to those of finite
dimension without changing the value of the right hand side.

Note that $\func{Neg}\left( V,\Omega \right) \geq 1$ for any potential $V$.
Indeed, if $V\in L^{1}\left( \Omega \right) $ then $1\in \mathcal{F}_{\Omega
}$ and $\mathcal{E}_{V,\Omega }\left( 1\right) \leq 0,$ which implies that $%
\func{Neg}\left( V,\Omega \right) \geq 1.$ If $V\notin L^{1}\left( \Omega
\right) $, then consider for any positive integer $n$ a function $%
f_{n}\left( x\right) =\frac{1}{n}\left( n-\left\vert x\right\vert \right)
_{+}.$ This function belongs to $\mathcal{F}_{V,\Omega }$ as it has a
compact support, $0\leq f_{n}\leq 1$, and $\int_{\Omega }\left\vert \nabla
f_{n}\right\vert ^{2}dx\leq \pi .$ Since $f_{n}\uparrow 1$ as $n\rightarrow
\infty $, it follows that%
\begin{equation*}
\int_{\Omega }Vf_{n}^{2}dx\rightarrow \int_{\Omega }Vdx=\infty .
\end{equation*}%
Hence, for large enough $n$, we obtain $\mathcal{E}_{V,\Omega }\left(
f_{n}\right) <0$ and, hence, $\func{Neg}\left( V,\Omega \right) \geq 1$.

If $\Omega =\mathbb{R}^{n}$ then we use the abbreviations 
\begin{equation*}
\mathcal{E}_{V}\equiv \mathcal{E}_{V,\mathbb{R}^{n}},\ \ \mathcal{F}%
_{V}\equiv \mathcal{F}_{V,\mathbb{R}^{n}},\ \ \func{Neg}\left( V\right)
\equiv \func{Neg}\left( V,\mathbb{R}^{n}\right) .
\end{equation*}%
The operator%
\begin{equation*}
H_{V}=-\Delta -V
\end{equation*}%
is defined as a self-adjoint operator in $L^{2}\left( \mathbb{R}^{n}\right) $
using the following standard procedure. Firstly, observe that the classical
Dirichlet integral 
\begin{equation*}
\mathcal{E}\left( u\right) =\int_{\mathbb{R}^{n}}\left\vert \nabla
u\right\vert ^{2}dx
\end{equation*}%
with the domain $W^{1,2}\left( \mathbb{R}^{2}\right) $ is a closed form in $%
L^{2}\left( \mathbb{R}^{2}\right) $, and the quadratic form%
\begin{equation*}
u\mapsto \int_{\mathbb{R}^{n}}Vu^{2}dx
\end{equation*}%
associated with the multiplication operator $u\mapsto Vu$, is closed with
the domain $L^{2}\left( dx\right) \cap L^{2}\left( Vdx\right) .$ Clearly,
the form $\mathcal{E}_{V}$ is well-defined in the domain%
\begin{equation*}
\mathcal{D}_{V}=W^{1,2}\cap L^{2}\left( Vdx\right)
\end{equation*}%
that is a subspace of $\mathcal{F}_{V}$. Under certain assumptions about $V$%
, the form $\left( \mathcal{E}_{V},\mathcal{D}_{V}\right) $ is closed in $%
L^{2}$ (and, in fact, $\mathcal{D}_{V}=W^{1,2}$). Consequently, its
generator, denoted by $H_{V}$, is a self-adjoint, semi-bounded below
operator in $L^{2}$, whose domain is a subspace of $\mathcal{D}_{V}$.

For any self-adjoint operator $A$, denote by $\func{Neg}\left( A\right) $
the rank of the operator $\mathbf{1}_{(-\infty ,0]}\left( A\right) $, that
is,%
\begin{equation*}
\func{Neg}\left( A\right) =\dim \func{Im}\mathbf{1}_{(-\infty ,0]}\left(
A\right) .
\end{equation*}
If the spectrum of $A$ below $0$ is discrete then $\func{Neg}\left( A\right) 
$ coincides with the number of non-positive eigenvalues of $A$ counted with
multiplicities.

\begin{lemma}
\label{LemED}If the form $\left( \mathcal{E}_{V},\mathcal{D}_{V}\right) $ is
closed and, hence, $H_{V}$ is well-defined, then%
\begin{equation}
\func{Neg}\left( H_{V}\right) \leq \func{Neg}\left( V\right) .
\label{supdim}
\end{equation}
\end{lemma}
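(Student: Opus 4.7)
The plan is to realize a subspace $\mathcal{V}$ witnessing $\func{Neg}(H_V)$ via the spectral projection of $H_V$ and then verify that this same subspace satisfies the conditions in the definition (\ref{negdef}) of $\func{Neg}(V)$. Concretely, let $\{E_\lambda\}$ be the spectral resolution of the self-adjoint operator $H_V$ and set
\begin{equation*}
P=\mathbf{1}_{(-\infty,0]}(H_V),\qquad \mathcal{V}_{0}=\func{Im} P,
\end{equation*}
so that $\dim \mathcal{V}_{0}=\func{Neg}(H_V)$ by definition. Since $H_V$ is semi-bounded below, there is $M>0$ such that the spectrum in $(-\infty,0]$ lies in the bounded interval $[-M,0]$; hence $H_V$ restricted to $\mathcal{V}_0$ is a bounded self-adjoint operator and in particular $\mathcal{V}_{0}\subset \mathcal{D}(H_V)\subset \mathcal{D}_{V}$.

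Next I would observe the inclusion $\mathcal{D}_{V}\subset \mathcal{F}_{V}$: if $f\in W^{1,2}\cap L^{2}(V\,dx)$ then $f\in L^{2}\subset L^{2}_{loc}$, $\int |\nabla f|^2\,dx<\infty$, and $\int Vf^{2}\,dx<\infty$, which are exactly the conditions in (\ref{Fdef}). Therefore $\mathcal{V}_{0}\subset \mathcal{F}_{V}$, so $\mathcal{V}_{0}$ is eligible as a test subspace in (\ref{negdef}).

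It remains to check that $\mathcal{E}_{V}(f)\leq 0$ for all $f\in \mathcal{V}_{0}$. By the functional calculus, for every $f\in \mathcal{V}_{0}\subset \mathcal{D}(H_V)$,
\begin{equation*}
\langle H_V f,f\rangle_{L^2}=\int_{[-M,0]}\lambda\, d\|E_{\lambda}f\|^{2}\leq 0.
\end{equation*}
Because $H_V$ is the generator of the closed form $(\mathcal{E}_V,\mathcal{D}_V)$, we have $\langle H_V f,f\rangle_{L^2}=\mathcal{E}_V(f)$ for $f\in \mathcal{D}(H_V)$, so $\mathcal{E}_V(f)\leq 0$ on $\mathcal{V}_{0}$. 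The definition (\ref{negdef}) of $\func{Neg}(V)$ as a supremum over subspaces on which the form is non-positive then yields
\begin{equation*}
\func{Neg}(H_V)=\dim \mathcal{V}_{0}\leq \func{Neg}(V),
\end{equation*}
which is (\ref{supdim}). The case $\func{Neg}(H_V)=\infty$ is handled by the same argument applied to arbitrary finite-dimensional subspaces of $\mathcal{V}_{0}$, which still satisfy all three conditions. The only potentially delicate point is the inclusion $\mathcal{V}_{0}\subset \mathcal{D}(H_V)$, but this is automatic from semi-boundedness, so no real obstacle arises.
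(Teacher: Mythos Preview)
Your proof is correct and follows essentially the same route as the paper: both arguments reduce to the inclusion $\mathcal{D}_{V}\subset\mathcal{F}_{V}$ together with a variational description of $\func{Neg}(H_{V})$. The paper simply cites the identity
\[
\func{Neg}(H_{V})=\sup\{\dim\mathcal{V}:\mathcal{V}\prec\mathcal{D}_{V},\ \mathcal{E}_{V}\leq 0\text{ on }\mathcal{V}\}
\]
from the literature, whereas you unpack the needed direction of this identity by hand via the spectral projection $\mathbf{1}_{(-\infty,0]}(H_{V})$; this is a legitimate and self-contained substitute for the citation.
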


\begin{proof}
It is well-known that 
\begin{equation*}
\func{Neg}\left( H_{V}\right) =\sup \left\{ \dim \mathcal{V}:\mathcal{V}%
\prec \mathcal{D}_{V}\text{ and }\mathcal{E}_{V}\left( f\right) \leq
0\;\forall f\in \mathcal{V}\right\}
\end{equation*}%
\textbf{\ } (cf. \textbf{\ }\cite[Lemma 2.7]{GrigNetYau}). Since $\mathcal{D}%
_{V}\subset \mathcal{F}_{V}$, (\ref{supdim}) holds by monotonicity argument.
\end{proof}

Theorem \ref{Tmainp} states the upper bound for $\func{Neg}\left( V\right) $%
, which implies then by Lemma \ref{LemED} the same bound for $\func{Neg}%
\left( H_{V}\right) $ whenever $H_{V}$ is well-defined. If this method were
applied in $\mathbb{R}^{n}$ with $n\geq 3$ then the resulting estimate would
not have been satisfactory, because $\func{Neg}\left( H_{V}\right) $ can be $%
0$ (as follows, for example, from (\ref{CLR})), whereas $\func{Neg}\left(
V\right) \geq 1$ for all potentials $V$ as it was remarked above. However,
our aim is $\mathbb{R}^{2}$, where $\func{Neg}\left( H_{V}\right) \geq 1$
for any non-zero potential $V$, so that we do not loose $1$ in the estimate.

In the rest of this section we prove some general properties of $\func{Neg}%
\left( V,\Omega \right) $ that will be used in the next sections. For a
bounded domain $\Omega $ with smooth boundary, the form $\mathcal{E}%
_{V,\Omega }$ can be associated with the operator $\Delta +V$ in $\Omega $
with the Neumann boundary condition on $\partial \Omega $. In this case $%
\func{Neg}\left( V,\Omega \right) $ is equal to the number of non-positive
eigenvalues of the Neumann problem in $\Omega $ for the operator $\Delta +V$%
. This understanding helps the intuition, but technically we never need to
use the operator $\Delta +V$. Nor the closability of the form $\mathcal{E}%
_{V,\Omega }$ is needed, except for Lemma \ref{LemED}.

\begin{lemma}
\label{LemL}Let $\Omega ,\widetilde{\Omega }$ be open subsets of $\mathbb{R}%
^{2}$ and $V$ and $\widetilde{V}$ be potentials in $\Omega $ and $\widetilde{%
\Omega },$ respectively. Let $\mathcal{L}:\mathcal{F}_{V,\Omega }\rightarrow 
\mathcal{F}_{\widetilde{V},\widetilde{\Omega }}$ be a linear injective
mapping.

\begin{itemize}
\item[$\left( a\right) $] If $\mathcal{E}_{V,\Omega }\left( u\right) \leq 0$
implies $\mathcal{E}_{\widetilde{V},\widetilde{\Omega }}\left( \widetilde{u}%
\right) \leq 0$ for $\widetilde{u}=\mathcal{L}\left( u\right) $ then 
\begin{equation}
\func{Neg}\left( V,\Omega \right) \leq \func{Neg}(\widetilde{V},\widetilde{%
\Omega }).  \label{N<N}
\end{equation}

\item[$\left( b\right) $] Assume that there are positive constants $%
c_{1},c_{2}$, such that, for any $u\in \mathcal{F}_{V,\Omega }$, the
function $\widetilde{u}=\mathcal{L}\left( u\right) $ satisfies%
\begin{equation}
\int_{\widetilde{\Omega }}\left\vert \nabla \widetilde{u}\right\vert
^{2}dx\leq c_{1}\int_{\Omega }\left\vert \nabla u\right\vert ^{2}dx
\label{uti1}
\end{equation}%
and%
\begin{equation}
\int_{\widetilde{\Omega }}\widetilde{V}\widetilde{u}^{2}dx\geq
c_{2}\int_{\Omega }Vu^{2}dx.  \label{uti2}
\end{equation}%
Then 
\begin{equation}
\func{Neg}\left( V,\Omega \right) \leq \func{Neg}(\frac{c_{1}}{c_{2}}%
\widetilde{V},\widetilde{\Omega }).  \label{Neg<=}
\end{equation}
\end{itemize}
\end{lemma}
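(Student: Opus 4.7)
The plan is to reduce part $(b)$ to part $(a)$ and to prove part $(a)$ by a direct transport of test subspaces through $\mathcal{L}$.

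For $(a)$, I would start from the definition (\ref{negdef}). Let $\mathcal{V} \prec \mathcal{F}_{V,\Omega}$ be any finite-dimensional subspace on which $\mathcal{E}_{V,\Omega} \leq 0$. Since $\mathcal{L}$ is linear and injective, the image $\widetilde{\mathcal{V}} = \mathcal{L}(\mathcal{V})$ is a linear subspace of $\mathcal{F}_{\widetilde{V},\widetilde{\Omega}}$ with $\dim \widetilde{\mathcal{V}} = \dim \mathcal{V}$. By the hypothesis of $(a)$ applied to each $u \in \mathcal{V}$, every element $\widetilde{u} \in \widetilde{\mathcal{V}}$ satisfies $\mathcal{E}_{\widetilde{V},\widetilde{\Omega}}(\widetilde{u}) \leq 0$. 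Hence $\widetilde{\mathcal{V}}$ is admissible in the supremum defining $\func{Neg}(\widetilde{V},\widetilde{\Omega})$, giving $\dim \mathcal{V} \leq \func{Neg}(\widetilde{V},\widetilde{\Omega})$. Taking the supremum over $\mathcal{V}$, and using the remark after (\ref{negdef}) that it suffices to consider finite-dimensional subspaces, yields (\ref{N<N}).

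For $(b)$, I would verify that the hypotheses $(\ref{uti1})$ and $(\ref{uti2})$ force the implication required by part $(a)$ for the modified potential $\frac{c_1}{c_2}\widetilde{V}$. Indeed, if $u \in \mathcal{F}_{V,\Omega}$ satisfies $\mathcal{E}_{V,\Omega}(u) \leq 0$, i.e.\ $\int_\Omega |\nabla u|^2\,dx \leq \int_\Omega V u^2\,dx$, then combining with $(\ref{uti1})$ and $(\ref{uti2})$ gives
\[
\int_{\widetilde{\Omega}} |\nabla \widetilde{u}|^2\,dx \;\leq\; c_1 \int_\Omega |\nabla u|^2\,dx \;\leq\; c_1 \int_\Omega V u^2\,dx \;\leq\; \frac{c_1}{c_2} \int_{\widetilde{\Omega}} \widetilde{V} \widetilde{u}^2\,dx,
\]
which is precisely $\mathcal{E}_{(c_1/c_2)\widetilde{V},\widetilde{\Omega}}(\widetilde{u}) \leq 0$. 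Applying part $(a)$ with $\widetilde{V}$ replaced by $\frac{c_1}{c_2}\widetilde{V}$ then gives (\ref{Neg<=}).

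There is no real obstacle here; the only point to be slightly careful about is that membership in $\mathcal{F}_{\widetilde{V},\widetilde{\Omega}}$ of the image $\widetilde{u} = \mathcal{L}(u)$ is built into the assumption that $\mathcal{L}$ maps into $\mathcal{F}_{\widetilde{V},\widetilde{\Omega}}$, so the inequalities $(\ref{uti1})$–$(\ref{uti2})$ used in part $(b)$ already ensure both finiteness conditions in the definition (\ref{Fdef}) of the target domain, and injectivity of $\mathcal{L}$ is what preserves dimensions when transporting subspaces.
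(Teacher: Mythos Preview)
Your proof is correct and follows essentially the same approach as the paper: transporting a finite-dimensional negative subspace through the injective linear map $\mathcal{L}$ for part $(a)$, and then verifying in part $(b)$ that the estimates (\ref{uti1})--(\ref{uti2}) yield the implication $\mathcal{E}_{V,\Omega}(u)\leq 0 \Rightarrow \mathcal{E}_{(c_1/c_2)\widetilde{V},\widetilde{\Omega}}(\widetilde{u})\leq 0$ so that part $(a)$ applies with the modified potential. The paper's argument is identical in substance; the only cosmetic difference is that it packages the chain of inequalities in $(b)$ as $\mathcal{E}_{(c_1/c_2)\widetilde{V},\widetilde{\Omega}}(\widetilde{u}) \leq c_1\,\mathcal{E}_{V,\Omega}(u) \leq 0$.
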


\begin{proof}
$\left( a\right) $ Let $\mathcal{V}$ be a finitely dimensional linear
subspace of $\mathcal{F}_{\Omega }$ where $\mathcal{E}_{V,\Omega }\leq 0$.
Then $\widetilde{\mathcal{V}}:=\mathcal{L}\left( \mathcal{V}\right) $ is a
linear subspace of $\mathcal{F}_{\widetilde{V},\widetilde{\Omega }}$ of the
same dimension. For any $\widetilde{u}\in \widetilde{\mathcal{V}}$ we have $%
\mathcal{E}_{\widetilde{V},\widetilde{\Omega }}\left( \widetilde{u}\right)
\leq 0,$ which implies $\dim \widetilde{\mathcal{V}}\leq \func{Neg}(%
\widetilde{V},\widetilde{\Omega })$. Since $\dim \mathcal{V}=\dim \widetilde{%
\mathcal{V}}$, we have also $\dim \mathcal{V}\leq \func{Neg}(\widetilde{V},%
\widetilde{\Omega })$, whence (\ref{N<N}) follows.

$\left( b\right) $ If $\mathcal{E}_{V,\Omega }\left( u\right) \leq 0$ then%
\begin{eqnarray*}
\mathcal{E}_{\frac{c_{1}}{c_{2}}\widetilde{V},\widetilde{\Omega }}\left( 
\widetilde{u}\right) &=&\int_{\widetilde{\Omega }}\left\vert \nabla 
\widetilde{u}\right\vert ^{2}dx-\frac{c_{1}}{c_{2}}\int_{\widetilde{\Omega }}%
\widetilde{V}\widetilde{u}^{2}dx \\
&\leq &c_{1}\int_{\Omega }\left\vert \nabla u\right\vert ^{2}dx-c_{1}\int
Vu^{2}dx=c_{1}\mathcal{E}_{V,\Omega }\left( u\right) \leq 0.
\end{eqnarray*}%
Applying part $\left( a\right) $ with $\frac{c_{1}}{c_{2}}\widetilde{V}$
instead of $\widetilde{V}$, we obtain (\ref{Neg<=}).
\end{proof}

\begin{lemma}
\label{Lem-K}Let $\Omega $ be any open subset of $\mathbb{R}^{2},$ and $K$
be a closed subset of $\mathbb{R}^{n}$ of measure $0.$ Set $\Omega ^{\prime
}=\Omega \setminus K$. Then we have%
\begin{equation}
\func{Neg}\left( V,\Omega \right) \leq \func{Neg}\left( V,\Omega ^{\prime
}\right) .  \label{=0}
\end{equation}
\end{lemma}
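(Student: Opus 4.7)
The plan is to apply Lemma \ref{LemL}$(a)$ with $\widetilde{\Omega}=\Omega'$ and $\widetilde V = V$, using the restriction map $\mathcal{L}(u) = u|_{\Omega'}$ as the required linear injection. Since $K$ is a closed subset of $\mathbb{R}^2$ of Lebesgue measure zero, it is invisible to all of the integrals in the definitions of $\mathcal{F}_{V,\Omega}$ and $\mathcal{E}_{V,\Omega}$, so the map $\mathcal{L}$ should preserve the energy form exactly.

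First I would verify that $\mathcal{L}$ maps $\mathcal{F}_{V,\Omega}$ into $\mathcal{F}_{V,\Omega'}$. If $u\in \mathcal{F}_{V,\Omega}$, then $u|_{\Omega'}\in L^{2}_{loc}(\Omega')$ (restriction preserves local integrability), the weak gradient on $\Omega'$ agrees with the restriction of the weak gradient on $\Omega$ (because $\Omega'$ is open), and
\[
\int_{\Omega'}|\nabla u|^{2}\,dx=\int_{\Omega}|\nabla u|^{2}\,dx,\qquad \int_{\Omega'}V u^{2}\,dx=\int_{\Omega}V u^{2}\,dx,
\]
since $|K|=0$. Both integrals are therefore finite, so $u|_{\Omega'}\in\mathcal{F}_{V,\Omega'}$, and also
\[
\mathcal{E}_{V,\Omega'}(u|_{\Omega'})=\mathcal{E}_{V,\Omega}(u).
\]
In particular, $\mathcal{E}_{V,\Omega}(u)\leq 0$ implies $\mathcal{E}_{V,\Omega'}(\mathcal{L}u)\leq 0$, which is exactly the hypothesis needed for Lemma \ref{LemL}$(a)$.

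Linearity of $\mathcal{L}$ is immediate. For injectivity, suppose $\mathcal{L}(u)=0$ in $\mathcal{F}_{V,\Omega'}$, i.e.\ $u=0$ a.e.\ on $\Omega'$. Since $\Omega\setminus \Omega'=\Omega\cap K$ has measure zero, $u=0$ a.e.\ on $\Omega$, so $u$ is the zero element of $L^{2}_{loc}(\Omega)$ and hence of $\mathcal{F}_{V,\Omega}$.

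Having verified all the hypotheses, Lemma \ref{LemL}$(a)$ yields $\func{Neg}(V,\Omega)\leq \func{Neg}(V,\Omega')$, which is \eqref{=0}. There is no real obstacle here: the only subtlety is being careful about the distinction between $L^{2}_{loc}$-equivalence classes and pointwise equality when checking injectivity, and checking that the weak gradient on the smaller open set $\Omega'$ genuinely coincides with the restriction of the weak gradient on $\Omega$.
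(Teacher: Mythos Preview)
Your proof is correct and follows essentially the same approach as the paper: restrict functions from $\mathcal{F}_{V,\Omega}$ to $\Omega'$, observe that the energy is unchanged because $K$ has measure zero, and invoke Lemma~\ref{LemL}$(a)$. You have simply spelled out in more detail the verifications (injectivity, preservation of the weak gradient) that the paper leaves implicit.
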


\begin{proof}
Every function $u\in \mathcal{F}_{V,\Omega }$ can be considered as an
element of $\mathcal{F}_{V,\Omega ^{\prime }}$ simply by restricting $u$ to $%
\Omega ^{\prime }$. Since the difference $\Omega \setminus \Omega ^{\prime }$
has measure $0$, we have $\mathcal{E}_{V,\Omega }\left( u\right) =\mathcal{E}%
_{V,\Omega ^{\prime }}\left( u\right) .$ Then Lemma \ref{LemL}$\left(
a\right) $ implies (\ref{=0}).
\end{proof}

\begin{definition}
\RM We say that a (finite or infinite) sequence $\left\{ \Omega _{k}\right\} 
$ of non-empty open sets $\Omega _{k}\subset \mathbb{R}^{2}$ is a \emph{%
partition} of an open set $\Omega \subset \mathbb{R}^{n}$ if all the sets $%
\Omega _{k}$ are disjoint, $\Omega _{k}\subset \Omega $, and $\overline{%
\Omega }\setminus \tbigcup_{k}\Omega _{k}$ has measure $0$ (cf. Fig. \ref%
{pic22}).
\end{definition}

\FRAME{ftbphFU}{3.2534in}{1.8619in}{0pt}{\Qcb{A partition of $\Omega $}}{%
\Qlb{pic22}}{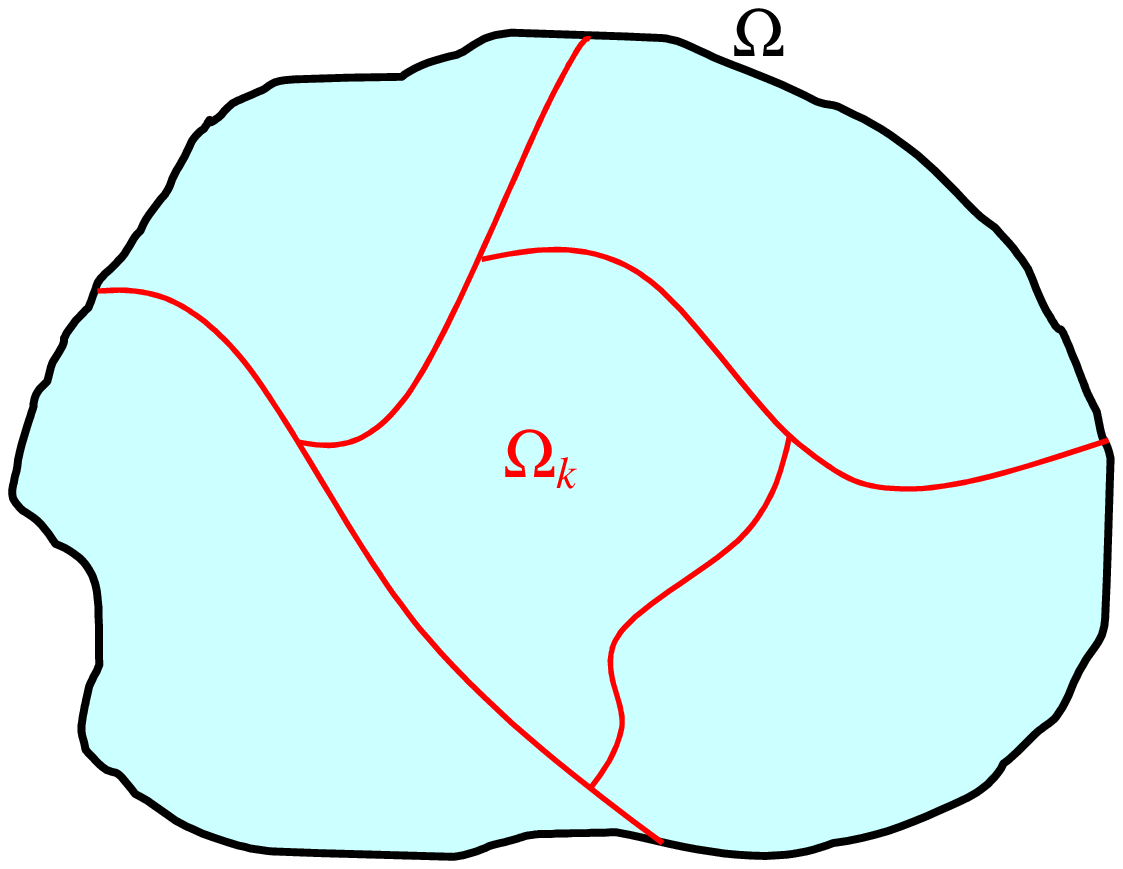}{\special{language "Scientific Word";type
"GRAPHIC";maintain-aspect-ratio TRUE;display "USEDEF";valid_file "F";width
3.2534in;height 1.8619in;depth 0pt;original-width 6.3027in;original-height
3.5864in;cropleft "0";croptop "1";cropright "1";cropbottom "0";filename
'pic22.eps';file-properties "XNPEU";}}

\begin{lemma}
\label{LemSub}If $\left\{ \Omega _{k}\right\} $ is a partition of $\Omega ,$
then%
\begin{equation}
\func{Neg}\left( V,\Omega \right) \leq \sum_{k}\func{Neg}\left( V,\Omega
_{k}\right) .  \label{n+n}
\end{equation}
\end{lemma}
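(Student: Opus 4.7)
The plan is to take any finite-dimensional subspace $\mathcal{V} \prec \mathcal{F}_{V,\Omega}$ on which $\mathcal{E}_{V,\Omega} \leq 0$ and show that $\dim \mathcal{V} \leq \sum_k N_k$, where $N_k := \func{Neg}(V,\Omega_k)$; by the definition of $\func{Neg}(V,\Omega)$ and the remark in the paper that the supremum in (\ref{negdef}) may be restricted to finite-dimensional subspaces, this will give (\ref{n+n}). If $\sum_k N_k = \infty$ there is nothing to prove, so we may assume this sum is finite. Since every non-empty $\Omega_k$ has $N_k \geq 1$ (as noted after the definition), a finite sum forces the partition to be finite, which we henceforth assume.

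The basic ingredient is the additivity of the energy form under the partition. Because the $\Omega_k$ are disjoint and $\Omega \setminus \bigcup_k \Omega_k$ has measure zero, every $f \in \mathcal{F}_{V,\Omega}$ restricts to some $\pi_k f := f|_{\Omega_k} \in \mathcal{F}_{V,\Omega_k}$, and
\begin{equation*}
\mathcal{E}_{V,\Omega}(f) \;=\; \sum_k \mathcal{E}_{V,\Omega_k}(\pi_k f).
\end{equation*}
For each $k$, the image $\pi_k(\mathcal{V})$ is a finite-dimensional subspace of $\mathcal{F}_{V,\Omega_k}$, and the restriction of $\mathcal{E}_{V,\Omega_k}$ to this image is a symmetric quadratic form on a finite-dimensional space. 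By the definition of $N_k$, no subspace of $\pi_k(\mathcal{V})$ on which this form is non-positive can have dimension exceeding $N_k$; thus Sylvester's law of inertia furnishes a decomposition $\pi_k(\mathcal{V}) = U_k \oplus W_k$ with $\dim U_k \leq N_k$, $\mathcal{E}_{V,\Omega_k} \leq 0$ on $U_k$, and $\mathcal{E}_{V,\Omega_k} > 0$ on $W_k \setminus \{0\}$. Let $P_k : \pi_k(\mathcal{V}) \to U_k$ denote the projection along $W_k$.

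Finally, consider the linear map $\Psi : \mathcal{V} \to \bigoplus_k U_k$ given by $\Psi(f) = (P_k \pi_k f)_k$. I expect $\Psi$ to be injective: if $\Psi(f) = 0$, then $\pi_k f \in W_k$ for every $k$, and if $f \neq 0$ then the joint restriction being injective (again using that the $\Omega_k$ cover $\Omega$ up to a null set) forces $\pi_{k_0} f \neq 0$ for some $k_0$, hence $\mathcal{E}_{V,\Omega_{k_0}}(\pi_{k_0} f) > 0$; since every other term in the additivity identity is $\geq 0$, we get $\mathcal{E}_{V,\Omega}(f) > 0$, contradicting $f \in \mathcal{V}$. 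Injectivity then yields $\dim \mathcal{V} \leq \sum_k \dim U_k \leq \sum_k N_k$, which is (\ref{n+n}).

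The only genuinely delicate point is ensuring that one may work with a finite partition, so that the codomain of $\Psi$ is finite-dimensional and the invocation of Sylvester's law on each piece assembles into a bona fide bound; everything else is a routine energy-additivity calculation combined with finite-dimensional linear algebra.
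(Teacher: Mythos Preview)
Your proof is correct and follows essentially the same route as the paper's. Both reduce to the finite case, restrict a test subspace $\mathcal{V}$ to each $\Omega_k$, split each image by Sylvester's inertia into a non-positive part of dimension $\leq N_k$ and a positive-definite complement, and then derive a contradiction from the additivity $\mathcal{E}_{V,\Omega}(f)=\sum_k \mathcal{E}_{V,\Omega_k}(\pi_k f)$; the paper phrases this as ``if $\dim\mathcal{V}>\sum_k N_k$ then $\mathcal{V}$ meets the positive-definite subspace of $\bigoplus_k \pi_k(\mathcal{V})$'', while you phrase the contrapositive as injectivity of $\Psi:\mathcal{V}\to\bigoplus_k U_k$. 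The only structural difference is that the paper first invokes Lemma~\ref{Lem-K} to pass from $\Omega$ to $\Omega'=\bigcup_k\Omega_k$ before doing the linear algebra, whereas you absorb this into the additivity identity directly (using that $\Omega\setminus\bigcup_k\Omega_k$ has measure zero); both are valid.
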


\begin{proof}
Set $\Omega ^{\prime }=\tbigcup_{k}\Omega _{k}$ and $K=\overline{\Omega }%
\setminus \Omega ^{\prime }$. Since $K$ is closed, $K$ has measure $0$, and $%
\Omega ^{\prime }=\Omega \setminus K$, we obtain by Lemma \ref{Lem-K} that%
\begin{equation*}
\func{Neg}\left( V,\Omega \right) \leq \func{Neg}(V,\Omega ^{\prime }).
\end{equation*}%
Next, we claim that%
\begin{equation}
\func{Neg}(V,\Omega ^{\prime })\leq \sum_{k}\func{Neg}\left( V,\Omega
_{k}\right) .  \label{n+n'}
\end{equation}%
If the sum in (\ref{n+n'}) is infinite then there is nothing to prove.
Assume that this sum is finite. Since $\func{Neg}\left( V,\Omega _{k}\right)
\geq 1$, the number of elements in the partition $\left\{ \Omega
_{k}\right\} $ must be finite, which will be assumed in the sequel. Denote
for simplicity $\mathcal{F}^{\prime }=\mathcal{F}_{V,\Omega ^{\prime }}$, $%
\mathcal{E}^{\prime }=\mathcal{E}_{V,\Omega ^{\prime }}$, $\mathcal{F}_{k}=%
\mathcal{F}_{V,\Omega _{k}}$ and $\mathcal{E}_{k}=\mathcal{E}_{V,\Omega
_{k}}.$

For any $f\in \mathcal{F}^{\prime }$ and index $k$, set $f_{k}=f|_{\Omega
_{k}}$ so that $f_{k}\in \mathcal{F}_{k}.$ Clearly, we have ~$%
f=\sum_{k}f_{k} $ and%
\begin{equation}
\mathcal{E}^{\prime }\left( f\right) =\sum_{k}\mathcal{E}_{k}\left(
f_{k}\right) .  \label{sumfk}
\end{equation}%
Hence, $\mathcal{F}^{\prime }$ can be identified as a subspace of the direct
sum $\mathcal{F}=\dbigoplus \mathcal{F}_{k}$, and $\mathcal{E}^{\prime }$
can be extended from $\mathcal{F}^{\prime }$ to $\mathcal{F}$ by (\ref{sumfk}%
), as the direct sum of all $\mathcal{E}_{k}$.

Let $\mathcal{V}$ be a finite dimensional subspace of $\mathcal{F}^{\prime }$
(or even of $\mathcal{F}$) where $\mathcal{E}^{\prime }\leq 0.$ Restricting
as above the functions from $\mathcal{V}$ to $\Omega _{k}$, we obtain a
finite dimensional subspace $\mathcal{V}_{k}$ of $\mathcal{F}_{k}$. Set $%
\mathcal{U}=\dbigoplus \mathcal{V}_{k},$ so that $\mathcal{V\prec U}\prec 
\mathcal{F}.$ The quadratic form $\mathcal{E}_{k}$ is diagonalizable on the
finite dimensional space $\mathcal{V}_{k}$, and the number $N_{k}$ of the
non-positive terms in the signature of $\mathcal{E}_{k}$ on $\mathcal{V}_{k}$
is clearly bounded by $\func{Neg}\left( V,\Omega _{k}\right) $. Hence,
denoting by $N$ the number of the non-positive terms in the signature of $%
\mathcal{E}^{\prime }$ on $\mathcal{U}$, we obtain%
\begin{equation*}
N=\sum_{k}N_{k}\leq \sum_{k}\func{Neg}\left( V,\Omega _{k}\right) .
\end{equation*}%
If $\dim \mathcal{V}>N$ then $\mathcal{V}$ intersects the subspace of $%
\mathcal{U}$ where $\mathcal{E}^{\prime }$ is positive definite, which
contradicts the assumption that $\mathcal{E}^{\prime }\leq 0$ on $\mathcal{V}
$. Therefore, $\dim \mathcal{V}\leq N$, whence (\ref{n+n'}) follows.
\end{proof}

\begin{lemma}
\label{LemV1V2}If $V_{1},V_{2}$ are two potentials in $\Omega $ then%
\begin{equation}
\func{Neg}\left( V_{1}+V_{2},\Omega \right) \leq \func{Neg}\left(
2V_{1},\Omega \right) +\func{Neg}\left( 2V_{2},\Omega \right) .
\label{NegV1+V2}
\end{equation}
\end{lemma}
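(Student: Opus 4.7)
The key observation is the algebraic identity
\begin{equation*}
\mathcal{E}_{V_{1}+V_{2},\Omega }\left( f\right) =\tfrac{1}{2}\left( \mathcal{E}_{2V_{1},\Omega }\left( f\right) +\mathcal{E}_{2V_{2},\Omega }\left( f\right) \right) ,
\end{equation*}
which holds for every $f$ in the common domain. First I would note that if $f\in \mathcal{F}_{V_{1}+V_{2},\Omega }$ then, because $V_{1},V_{2}\geq 0$, we have $\int_{\Omega }V_{i}f^{2}dx\leq \int_{\Omega }(V_{1}+V_{2})f^{2}dx<\infty $, so $f\in \mathcal{F}_{2V_{i},\Omega }$ for both $i=1,2$. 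Thus $\mathcal{F}_{V_{1}+V_{2},\Omega }\subset \mathcal{F}_{2V_{1},\Omega }\cap \mathcal{F}_{2V_{2},\Omega }$, and the quadratic forms $\mathcal{E}_{2V_{1},\Omega }$ and $\mathcal{E}_{2V_{2},\Omega }$ are both well defined on any subspace of $\mathcal{F}_{V_{1}+V_{2},\Omega }$.

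Set $N_{i}=\func{Neg}\left( 2V_{i},\Omega \right) $ for $i=1,2$. As noted in the paragraph following (\ref{negdef}), it suffices to show that $\dim \mathcal{V}\leq N_{1}+N_{2}$ for every \emph{finite}-dimensional subspace $\mathcal{V}\prec \mathcal{F}_{V_{1}+V_{2},\Omega }$ on which $\mathcal{E}_{V_{1}+V_{2},\Omega }\leq 0$. I would argue by contradiction: suppose $\dim \mathcal{V}>N_{1}+N_{2}$. On the finite-dimensional space $\mathcal{V}$, each symmetric form $\mathcal{E}_{2V_{i},\Omega }$ can be diagonalized, giving a positive index of inertia $n_{+}^{(i)}$. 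By the very definition of $N_{i}$, the maximal dimension of a subspace of $\mathcal{V}$ on which $\mathcal{E}_{2V_{i},\Omega }\leq 0$ is at most $N_{i}$, and classical Sylvester-type reasoning identifies this maximal dimension with $\dim \mathcal{V}-n_{+}^{(i)}$. Therefore
\begin{equation*}
n_{+}^{(i)}\geq \dim \mathcal{V}-N_{i},\qquad i=1,2.
\end{equation*}

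Choose subspaces $\mathcal{V}_{i}^{+}\subset \mathcal{V}$ of dimension $n_{+}^{(i)}$ on which $\mathcal{E}_{2V_{i},\Omega }$ is strictly positive definite. A standard dimension count gives
\begin{equation*}
\dim \bigl(\mathcal{V}_{1}^{+}\cap \mathcal{V}_{2}^{+}\bigr)\geq n_{+}^{(1)}+n_{+}^{(2)}-\dim \mathcal{V}\geq \dim \mathcal{V}-N_{1}-N_{2}>0,
\end{equation*}
so there exists a nonzero $f\in \mathcal{V}_{1}^{+}\cap \mathcal{V}_{2}^{+}$. For this $f$ we have $\mathcal{E}_{2V_{1},\Omega }(f)>0$ and $\mathcal{E}_{2V_{2},\Omega }(f)>0$, hence by the identity above $\mathcal{E}_{V_{1}+V_{2},\Omega }(f)>0$, contradicting the assumption that $\mathcal{E}_{V_{1}+V_{2},\Omega }\leq 0$ on $\mathcal{V}$.

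There is no real obstacle here; the only subtlety is making sure the ``non-positive index'' and ``positive index'' language is applied correctly on a possibly degenerate form on a finite-dimensional space, which is handled by passing to the Sylvester decomposition of $\mathcal{E}_{2V_{i},\Omega }|_{\mathcal{V}}$. The argument extends verbatim to the inequality $\func{Neg}\left( \sum_{k=1}^{m}V_{k},\Omega \right) \leq \sum_{k=1}^{m}\func{Neg}\left( mV_{k},\Omega \right) $, though only the case $m=2$ is stated.
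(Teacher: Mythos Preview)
Your proof is correct and follows essentially the same route as the paper: both use the identity $2\mathcal{E}_{V_{1}+V_{2},\Omega}=\mathcal{E}_{2V_{1},\Omega}+\mathcal{E}_{2V_{2},\Omega}$, assume for contradiction a finite-dimensional subspace $\mathcal{V}$ with $\dim\mathcal{V}>N_{1}+N_{2}$ on which $\mathcal{E}_{V_{1}+V_{2},\Omega}\leq 0$, and then intersect two large subspaces on which the individual forms are positive to produce a nonzero vector violating non-positivity. Your version is in fact slightly more careful than the paper's, since you take the $\mathcal{V}_{i}^{+}$ to be subspaces on which $\mathcal{E}_{2V_{i},\Omega}$ is \emph{strictly} positive definite (which is exactly what Sylvester gives, with $n_{+}^{(i)}=\dim\mathcal{V}-N_{i}$), whereas the paper writes only ``$\mathcal{E}_{2V_{i}}\geq 0$'' on $\mathcal{P}_{i}$ and then claims a contradiction with $\mathcal{E}_{V}\leq 0$; your formulation avoids the superficial worry that both could vanish.
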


\begin{proof}
Let us write for simplicity $\mathcal{E}_{V,\Omega }\equiv \mathcal{E}_{V}$
and $\mathcal{F}_{V,\Omega }\equiv \mathcal{F}_{V}$. Set $V=V_{1}+V_{2}$ and
observe that by (\ref{Fdef})%
\begin{equation*}
\mathcal{F}_{V}=\mathcal{F}_{V_{1}}\cap \mathcal{F}_{V_{2}}
\end{equation*}%
and by (\ref{Edef}) 
\begin{equation}
2\mathcal{E}_{V}=\mathcal{E}_{2V_{1}}+\mathcal{E}_{2V_{2}}\ \ \text{on }%
\mathcal{F}_{V}.  \label{EV1V2}
\end{equation}%
Assume that (\ref{NegV1+V2}) is not true. Then there exists a
finite-dimensional subspace $\mathcal{V}$ of $\mathcal{F}_{V}$ where $%
\mathcal{E}_{V}\leq 0$ and such that%
\begin{equation}
\dim \mathcal{V}>\func{Neg}\left( 2V_{1}\right) +\func{Neg}\left(
2V_{2}\right) .  \label{dimV>}
\end{equation}%
Set $N=\dim \mathcal{V}$ and denote by $N_{i}$, $i=1,2$, the maximal
dimension of a subspace of $\mathcal{V}$ where $\mathcal{E}_{2V_{i}}\leq 0.$
Then there exists a subspace $\mathcal{P}_{i}$ of $\mathcal{V}$ of dimension 
$N-N_{i}$ where $\mathcal{E}_{2V_{i}}\geq 0.$ The intersection $\mathcal{P}%
_{1}\cap \mathcal{P}_{2}$ has dimension at least 
\begin{equation*}
\left( N-N_{1}+N-N_{2}\right) -N=N-\left( N_{1}+N_{2}\right) >0,
\end{equation*}%
where the positivity holds by (\ref{dimV>}). By (\ref{EV1V2}) the form $%
\mathcal{E}_{V}$ is non-negative on $\mathcal{P}_{1}\cap \mathcal{P}_{2}$,
which contradicts the assumption that $\mathcal{E}_{V}\leq 0$ on $\mathcal{V}
$.
\end{proof}

\subsection{Transformation of potentials and weights}

Given a $2\times 2$ matrix $A=\left( a_{ij}\right) $, denote by $\left\Vert
A\right\Vert $ the norm of $A$ as an linear operator in $\mathbb{R}^{2}$
with the Euclidean norm. Denote also%
\begin{equation*}
\left\Vert A\right\Vert _{2}:=\sqrt{%
a_{11}^{2}+a_{12}^{2}+a_{21}^{2}+a_{22}^{2}}.
\end{equation*}%
It is easy to see that%
\begin{equation*}
\frac{1}{\sqrt{2}}\left\Vert A\right\Vert _{2}\leq \left\Vert A\right\Vert
\leq \left\Vert A\right\Vert _{2}
\end{equation*}%
Assuming further that $A$ is non-singular, define the quantities%
\begin{equation*}
M\left( A\right) :=\frac{\left\Vert A\right\Vert ^{2}}{\det A}\text{\ \ \
and\ \ \ }M_{2}\left( A\right) :=\frac{\left\Vert A\right\Vert _{2}^{2}}{%
\det A}
\end{equation*}%
For example, if $A$ is a conformal matrix, that is, $\left( 
\begin{array}{cc}
\alpha & \beta \\ 
-\beta & \alpha%
\end{array}%
\right) $ or $\left( 
\begin{array}{cc}
\alpha & \beta \\ 
\beta & -\alpha%
\end{array}%
\right) ,$ then 
\begin{equation*}
\det A=\alpha ^{2}+\beta ^{2}=\left\Vert A\right\Vert ^{2},
\end{equation*}%
whence $M\left( A\right) =1.$

For a general non-singular matrix $A$, the following identity holds:%
\begin{equation}
M_{2}\left( A\right) =M_{2}\left( A^{-1}\right) .  \label{M2=M2}
\end{equation}%
Indeed, denoting $a=\det A$, we obtain%
\begin{equation*}
A^{-1}=\frac{1}{a}\left( 
\begin{array}{cc}
a_{22} & -a_{12} \\ 
a_{21} & a_{11}%
\end{array}%
\right) ,
\end{equation*}%
whence $\left\Vert A^{-1}\right\Vert _{2}^{2}=\frac{1}{a^{2}}\left\Vert
A\right\Vert _{2}^{2}$, which implies (\ref{M2=M2}). Consequently, we obtain
that, for any non-singular matrix $A$,%
\begin{equation}
\frac{1}{2}M\left( A\right) \leq M\left( A^{-1}\right) \leq 2M\left(
A\right) .  \label{MAA-1}
\end{equation}

Let $\Omega $ and $\widetilde{\Omega }$ be two open subsets of $\mathbb{R}%
^{2}$ and $\Phi :\widetilde{\Omega }\rightarrow \Omega $ be a $C^{1}$%
-diffeomorphism. Denote by $\Phi ^{\prime }$ its Jacobi matrix and by $%
J_{\Phi }$ - its Jacobian, that it $J_{\Phi }=\det \Phi ^{\prime }.$ Set%
\begin{equation*}
M_{\Phi }:=\sup_{x\in \widetilde{\Omega }}M\left( \Phi ^{\prime }\left(
x\right) \right) =\sup_{x\in \widetilde{\Omega }}\frac{\left\Vert \Phi
^{\prime }\left( x\right) \right\Vert ^{2}}{\left\vert J_{\Phi }\left(
x\right) \right\vert }.
\end{equation*}

We will use two types of mappings $\Phi :$ bilipschitz and conformal. If $%
\Phi $ is conformal then we have $M_{\Phi }=1$. Moreover, if $\Phi $ is
holomorphic then 
\begin{equation}
J_{\Phi }\left( z\right) =\left\vert \Phi ^{\prime }\left( z\right)
\right\vert ^{2},  \label{J}
\end{equation}%
where now $\Phi ^{\prime }=\frac{d\Phi }{dz}$ is a complex derivative in $%
z\in \mathbb{C}$.

If $\Phi $ is bilipschitz and with bilipschitz constant $L$ then an easy
calculation shows that $\left\Vert \Phi ^{\prime }\left( x\right)
\right\Vert ^{2}\leq 4L^{2}$ and that both $\left\vert J_{\Phi }\right\vert $
and $\left\vert J_{\Phi ^{-1}}\right\vert $ are bounded by $2L^{2}$ whence $%
M_{\Phi }\leq 8L^{4}$.

By (\ref{MAA-1}), we always have%
\begin{equation}
\frac{1}{2}M_{\Phi }\leq M_{\Phi ^{-1}}\leq 2M_{\Phi }  \label{MFi2}
\end{equation}

The next lemma establishes the behavior of $\func{Neg}\left( V,\Omega
\right) $ and certain integrals over $\Omega $ under transformations of $%
\Omega $. By a weight function on $\Omega $ we mean any non-negative
function from $L_{loc}^{1}\left( \Omega \right) .$

\begin{lemma}
\label{LemVW}\label{Lemn<n}Let $\Omega ,\widetilde{\Omega }$ be two open
subsets of $\mathbb{R}^{2}$ and 
\begin{equation*}
\Psi :\Omega \rightarrow \widetilde{\Omega }
\end{equation*}%
be a $C^{1}$ diffeomorphism with a finite $M_{\Psi }$. Set $\Phi =\Psi
^{-1}. $

\begin{itemize}
\item[$\left( a\right) $] For any potential $V$ on $\Omega $, define a $\Psi 
$-push-forward potential $\widetilde{V}$ on $\widetilde{\Omega }$ by 
\begin{equation}
\widetilde{V}\left( y\right) =M_{\Phi }\left\vert J_{\Phi }\left( y\right)
\right\vert V\left( \Phi \left( y\right) \right) .  \label{Vti}
\end{equation}%
Then 
\begin{equation}
\func{Neg}\left( V,\Omega \right) \leq \func{Neg}({\widetilde{V},\widetilde{%
\Omega })}.  \label{n<n}
\end{equation}

\item[$\left( b\right) $] For any $p\geq 1$ and any weight function $W$ on $%
\Omega $, define a $\Psi $-push-forward weight function $\widetilde{W}$ on $%
\widetilde{\Omega }$ by 
\begin{equation}
\widetilde{W}\left( y\right) =M_{\Phi }^{-p}\left\vert J_{\Phi }\left(
y\right) \right\vert ^{1-p}W\left( \Phi \left( y\right) \right) .  \label{WW}
\end{equation}%
Then we the following identity holds%
\begin{equation}
\int_{\Omega }V\left( x\right) ^{p}W\left( x\right) dx=\int_{\widetilde{%
\Omega }}\widetilde{V}\left( y\right) ^{p}\widetilde{W}\left( y\right) dy
\label{VW}
\end{equation}
\end{itemize}
\end{lemma}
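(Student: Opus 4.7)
The strategy is to reduce both parts to change-of-variables calculations, and for part $(a)$ to feed the outcome into Lemma \ref{LemL}.

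For part $(a)$, define the pullback map $\mathcal{L}:\mathcal{F}_{V,\Omega}\to\mathcal{F}_{\widetilde{V},\widetilde{\Omega}}$ by $\widetilde{u}=\mathcal{L}(u):=u\circ\Phi$. It is linear, and injective since $\Phi$ is a bijection. The chain rule gives $\nabla_{y}\widetilde{u}(y)=\Phi'(y)^{T}\nabla u(\Phi(y))$, hence
$$|\nabla\widetilde{u}(y)|^{2}\leq\|\Phi'(y)\|^{2}|\nabla u(\Phi(y))|^{2}\leq M_{\Phi}|J_{\Phi}(y)|\,|\nabla u(\Phi(y))|^{2}$$
directly from the definition of $M_{\Phi}$. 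Integrating and applying the change of variable $x=\Phi(y)$ gives
$$\int_{\widetilde{\Omega}}|\nabla\widetilde{u}|^{2}\,dy\;\leq\;M_{\Phi}\int_{\Omega}|\nabla u|^{2}\,dx.$$
The push-forward (\ref{Vti}) is engineered precisely so that the same change of variable produces the exact identity
$$\int_{\widetilde{\Omega}}\widetilde{V}\,\widetilde{u}^{2}\,dy\;=\;M_{\Phi}\int_{\Omega}Vu^{2}\,dx.$$
Thus (\ref{uti1})--(\ref{uti2}) hold with $c_{1}=c_{2}=M_{\Phi}$, so Lemma \ref{LemL}$(b)$ yields
$$\func{Neg}(V,\Omega)\;\leq\;\func{Neg}\bigl(\tfrac{c_{1}}{c_{2}}\widetilde{V},\widetilde{\Omega}\bigr)\;=\;\func{Neg}(\widetilde{V},\widetilde{\Omega}),$$
which is (\ref{n<n}). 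The two displayed estimates also confirm that $\mathcal{L}$ actually lands in $\mathcal{F}_{\widetilde{V},\widetilde{\Omega}}$ (finiteness of the Dirichlet and potential integrals), and local $L^{2}$ integrability of $\widetilde{u}$ is automatic from $u\in L^{2}_{loc}(\Omega)$ and the $C^{1}$-smoothness of $\Phi$.

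For part $(b)$, insert (\ref{Vti}) and (\ref{WW}) and observe the telescoping of constants:
$$\widetilde{V}(y)^{p}\widetilde{W}(y)\;=\;M_{\Phi}^{p}|J_{\Phi}(y)|^{p}V(\Phi(y))^{p}\cdot M_{\Phi}^{-p}|J_{\Phi}(y)|^{1-p}W(\Phi(y))\;=\;|J_{\Phi}(y)|\,V(\Phi(y))^{p}W(\Phi(y)).$$
The substitution $x=\Phi(y)$ then transforms $\int_{\widetilde{\Omega}}\widetilde{V}^{p}\widetilde{W}\,dy$ into $\int_{\Omega}V^{p}W\,dx$, establishing (\ref{VW}).

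There is no real obstacle here: everything is routine once $\mathcal{L}$ is set in place, with Lemma \ref{LemL} doing the conceptual work in $(a)$. The only point to be attentive to is the bookkeeping of the factor $M_{\Phi}$: its placement in (\ref{Vti}) is chosen so that $c_{1}/c_{2}=1$ in Lemma \ref{LemL}$(b)$, and its matching placement $M_{\Phi}^{-p}$ in (\ref{WW}) is chosen so that (\ref{VW}) emerges as a clean equality with no leftover multiplicative constants.
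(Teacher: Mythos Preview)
Your proof is correct and follows essentially the same approach as the paper: both define the pullback $\widetilde{u}=u\circ\Phi$, establish the same Dirichlet-integral inequality and potential-integral identity by change of variables, and then invoke Lemma~\ref{LemL} (you cite part~$(b)$ with $c_{1}=c_{2}=M_{\Phi}$, the paper uses the equivalent energy inequality $\mathcal{E}_{\widetilde{V},\widetilde{\Omega}}(\widetilde{u})\leq M_{\Phi}\mathcal{E}_{V,\Omega}(u)$ and part~$(a)$). Part~$(b)$ is identical in both.
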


As one sees from (\ref{Vti}) and (\ref{WW}), the rules of change of a
potential and a weight function under a mapping $\Psi $ are different.

\begin{proof}
$\left( a\right) $ Let $\mathcal{V}$ be a subspace of $\mathcal{F}_{V,\Omega
}$ as in (\ref{negdef}). Define $\widetilde{\mathcal{V}}$ as the pullback of 
$\mathcal{V}$ under the mapping $\Phi $, that is, any function $\widetilde{f}%
\in \widetilde{\mathcal{V}}$ has the form 
\begin{equation*}
\widetilde{f}\left( y\right) =f\left( \Phi \left( y\right) \right)
\end{equation*}%
for some $f\in \mathcal{V}$. Let us show that $\widetilde{f}\in \mathcal{F}_{%
\widetilde{V},\widetilde{\Omega }}.$ That $\widetilde{f}\in L_{loc}^{2}(%
\widetilde{\Omega })$ is obvious. Using the change $y=\Psi \left( x\right) $
(or $x=\Phi \left( y\right) $), we obtain 
\begin{eqnarray}
\int_{\widetilde{\Omega }}\left\vert \widetilde{f}\left( y\right)
\right\vert ^{2}\widetilde{V}\left( y\right) dy &=&\int_{\Omega }\left\vert 
\widetilde{f}\left( y\right) \right\vert ^{2}\widetilde{V}\left( y\right)
\left\vert J_{\Psi }\left( x\right) \right\vert dx  \notag \\
&=&\int_{\Omega }\left\vert f\left( x\right) \right\vert ^{2}M_{\Phi
}V\left( x\right) \left\vert J_{\Phi }\left( y\right) \right\vert \left\vert
J_{\Phi }\left( y\right) \right\vert ^{-1}dx  \notag \\
&=&M_{\Phi }\int_{\Omega }\left\vert f\left( x\right) \right\vert
^{2}V\left( x\right) dx  \label{3}
\end{eqnarray}%
and%
\begin{eqnarray}
\int_{\widetilde{\Omega }}\left\vert \nabla \widetilde{f}\left( y\right)
\right\vert ^{2}dy &=&\int_{\widetilde{\Omega }}\left\vert \left( \nabla
f\right) \left( \Phi \left( y\right) \right) \cdot \Phi ^{\prime }\left(
y\right) \right\vert ^{2}dy  \notag \\
&\leq &\int_{\widetilde{\Omega }}\left\Vert \Phi ^{\prime }\left( y\right)
\right\Vert ^{2}\left\vert \nabla f\right\vert ^{2}\left( \Phi \left(
y\right) \right) dy  \notag \\
&\leq &M_{\Phi }\int_{\widetilde{\Omega }}\left\vert J_{\Phi }\left(
y\right) \right\vert \left\vert \nabla f\right\vert ^{2}\left( \Phi \left(
y\right) \right) dy  \notag \\
&=&M_{\Phi }\int_{\Omega }\left\vert \nabla f\right\vert ^{2}\left( x\right)
dx.  \label{4}
\end{eqnarray}%
It follows from (\ref{3}) and (\ref{4}) that $\widetilde{f}\in \mathcal{F}_{%
\widetilde{V},\widetilde{\Omega }}$ and $\mathcal{E}_{\widetilde{V},%
\widetilde{\Omega }}(\widetilde{f})\leq M_{\Phi }\mathcal{E}_{V,\Omega
}\left( f\right) $. Applying Lemma \ref{LemL} to the mapping $f\mapsto 
\widetilde{f}$, we obtain (\ref{n<n}).

$\left( b\right) $ Using the same change in integral, we obtain%
\begin{eqnarray*}
\int_{\widetilde{\Omega }}\widetilde{V}\left( y\right) ^{p}\widetilde{W}%
\left( y\right) dy &=&\int_{\Omega }\widetilde{V}\left( \Psi \left( x\right)
\right) ^{p}\widetilde{W}\left( \Psi \left( x\right) \right) \left\vert
J_{\Psi }\left( x\right) \right\vert dx \\
&=&\int_{\Omega }\left( M_{\Phi }V\left( x\right) \left\vert J_{\Phi }\left(
y\right) \right\vert \right) ^{p}M_{\Phi }^{-p}\left\vert J_{\Phi }\left(
y\right) \right\vert ^{1-p}W\left( x\right) \left\vert J_{\Phi }\left(
y\right) \right\vert ^{-1}dx \\
&=&\int_{\Omega }V\left( x\right) ^{p}W\left( x\right) dx.
\end{eqnarray*}
\end{proof}

\begin{remark}
\RM\label{Remcon}If $\Psi $ is conformal then it follows from Lemma \ref%
{LemVW} that 
\begin{equation*}
\func{Neg}\left( V,\Omega \right) =\func{Neg}(\widetilde{V},\widetilde{%
\Omega }),
\end{equation*}%
where 
\begin{equation*}
\widetilde{V}\left( y\right) =\left\vert J_{\Phi }\left( y\right)
\right\vert V\left( \Phi \left( y\right) \right) .
\end{equation*}%
Furthermore, if $\Psi $ is holomorphic then the formulas (\ref{Vti}) and (%
\ref{WW}) can be simplified as follows: 
\begin{equation*}
\widetilde{V}\left( z\right) =\left\vert \Phi ^{\prime }\left( z\right)
\right\vert ^{2}V\left( \Phi \left( z\right) \right) 
\end{equation*}%
and%
\begin{equation*}
\widetilde{W}\left( z\right) =\frac{W\left( \Phi \left( z\right) \right) }{%
\left\vert \Phi ^{\prime }\left( z\right) \right\vert ^{2\left( p-1\right) }}%
,
\end{equation*}%
where $\Phi ^{\prime }$ is a $\mathbb{C}$-derivative.
\end{remark}

\subsection{Bounded test functions}

\label{SecNegb}Consider the following modification of the space $\mathcal{F}%
_{V,\Omega }$:%
\begin{equation}
\mathcal{F}_{V,\Omega }^{b}=\left\{ f\in L^{\infty }\left( \Omega \right)
:\int_{\Omega }\left\vert \nabla f\right\vert ^{2}dx<\infty ,\ \
\int_{\Omega }Vf^{2}dx<\infty \right\}   \label{Fbdef}
\end{equation}%
and of the counting function:%
\begin{equation}
\func{Neg}^{b}\left( V,\Omega \right) :=\sup \left\{ \dim \mathcal{V}:%
\mathcal{V}\prec \mathcal{F}_{V,\Omega }^{b}:\mathcal{E}_{V,\Omega }\left(
f\right) \leq 0\text{ for all }f\in \mathcal{V}\right\} .  \label{negbdef}
\end{equation}%
In short, we restrict consideration to the class of bounded test functions.
By monotonicity we have%
\begin{equation*}
\func{Neg}^{b}\left( V,\Omega \right) \leq \func{Neg}\left( V,\Omega \right)
.
\end{equation*}

The following claim will be used in Section \ref{SecOneStrip}.

\begin{lemma}
\label{Lem2V}Let $\Omega $ be a connected domain in $\mathbb{R}^{2}$ such
that $\func{Neg}^{b}\left( 2V,\Omega \right) =1$.  Then $\func{Neg}\left(
V,\Omega \right) =1.$
\end{lemma}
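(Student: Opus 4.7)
My plan is to argue by contradiction. Since $\func{Neg}(V,\Omega)\ge 1$ holds for every potential (as remarked right after the definition of $\func{Neg}$), it suffices to rule out $\func{Neg}(V,\Omega)\ge 2$. So suppose there exists a two-dimensional subspace $\mathcal{V}\subset\mathcal{F}_{V,\Omega}$ on which $\mathcal{E}_{V,\Omega}\le 0$. From $\mathcal{V}$ I will manufacture a two-dimensional subspace of $\mathcal{F}_{2V,\Omega}^{b}$ on which $\mathcal{E}_{2V,\Omega}\le 0$; this contradicts $\func{Neg}^{b}(2V,\Omega)=1$.

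The first trivial observation is that $V\not\equiv 0$: otherwise $\mathcal{E}_{V,\Omega}(u)=\int_{\Omega}|\nabla u|^{2}$, so the inequality $\mathcal{E}_{V,\Omega}(u)\le 0$ would force $\nabla u\equiv 0$, and by connectedness of $\Omega$ each such $u$ is constant, making $\dim\mathcal{V}\le 1$. The crux of the argument, which I expect to be the main obstacle, is to upgrade $\mathcal{E}_{2V,\Omega}\le 0$ on $\mathcal{V}$ to \emph{strict} negative definiteness. The inequality itself is immediate from
\[
\mathcal{E}_{2V,\Omega}(u)=\mathcal{E}_{V,\Omega}(u)-\int_{\Omega}Vu^{2},
\]
since both terms on the right are nonpositive for $u\in\mathcal{V}$. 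For strictness, equality $\mathcal{E}_{2V,\Omega}(u)=0$ forces both $\int_{\Omega}Vu^{2}=0$ and $\mathcal{E}_{V,\Omega}(u)=0$, hence also $\int_{\Omega}|\nabla u|^{2}=0$; connectedness of $\Omega$ then makes $u$ a constant $c$, and vanishing on the nonempty set $\{V>0\}$ forces $c=0$. This is precisely where the connectedness hypothesis enters.

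With strict negative definiteness in hand, the remainder is routine truncation. Pick a basis $f_{1},f_{2}$ of $\mathcal{V}$ and set $f_{i}^{N}:=(f_{i}\wedge N)\vee(-N)$ for $N>0$. These are bounded and lie in $\mathcal{F}_{2V,\Omega}^{b}=\mathcal{F}_{V,\Omega}^{b}$, since $|\nabla f_{i}^{N}|=\mathbf{1}_{\{|f_{i}|<N\}}|\nabla f_{i}|\le|\nabla f_{i}|$ and $(f_{i}^{N})^{2}\le f_{i}^{2}$. Dominated convergence applied to the four relevant integrands (with dominants $(|\nabla f_{i}|^{2}+|\nabla f_{j}|^{2})/2$ and $V(f_{i}^{2}+f_{j}^{2})/2$) shows that the $2\times 2$ Gram matrix of the bilinear form associated with $\mathcal{E}_{2V,\Omega}$ on $\{f_{1}^{N},f_{2}^{N}\}$ converges as $N\to\infty$ to its value on $\{f_{1},f_{2}\}$, which is strictly negative definite by the previous step. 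Hence for $N$ sufficiently large the truncated Gram matrix is still strictly negative definite, so $f_{1}^{N}$ and $f_{2}^{N}$ are linearly independent and span the desired two-dimensional subspace of $\mathcal{F}_{2V,\Omega}^{b}$ on which $\mathcal{E}_{2V,\Omega}\le 0$, producing the contradiction.
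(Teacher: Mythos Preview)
Your proof is correct, and it is genuinely simpler than the paper's argument, though both share the same opening move: establishing that $\mathcal{E}_{2V,\Omega}$ is strictly negative on $\mathcal{V}\setminus\{0\}$ via the connectedness of $\Omega$ and the non-triviality of $V$.

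The divergence comes after that. The paper does not truncate a basis directly. Instead it decomposes each $f\in\mathcal{V}$ into its positive and negative parts, sets $X(f)=\mathcal{E}_{2V,\Omega}(f_{+})$ and $Y(f)=\mathcal{E}_{2V,\Omega}(f_{-})$, and observes that $X(f)+Y(f)=\mathcal{E}_{2V,\Omega}(f)<0$ together with the symmetry $X(-f)=Y(f)$. A connectedness argument on the image of the unit circle of $\mathcal{V}$ under $f\mapsto(X(f),Y(f))$ then produces a single $f$ with $X(f)=Y(f)<0$. One then truncates $f_{+}$ and $f_{-}$ separately; since these have disjoint supports, their truncations automatically span a two-dimensional space on which $\mathcal{E}_{2V,\Omega}\le 0$.

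Your route bypasses the positive/negative decomposition and the topological step entirely: you truncate an arbitrary basis $f_{1},f_{2}$ and pass to the limit in the full $2\times 2$ Gram matrix of the associated bilinear form, using that negative definiteness is an open condition. This is cleaner and more robust (it would generalise immediately to higher-dimensional subspaces $\mathcal{V}$), at the modest cost of having to track the off-diagonal entries. The paper's approach, by contrast, produces two test functions with disjoint supports, so the off-diagonal entry is exactly zero and no Gram-matrix argument is needed---but the price is the intermediate-value-type detour.
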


\begin{proof}
Assume that $\func{Neg}\left( V,\Omega \right) >1.$ Then there exists a
two-dimensional subspace $\mathcal{V}$ of $\mathcal{F}_{V,\Omega }$ such
that $\mathcal{E}_{V,\Omega }\leq 0$ on $\mathcal{V}$. Consider the
following two functions on $\mathcal{V}$:%
\begin{equation}
X\left( f\right) =\int_{\Omega }\left\vert \nabla f_{+}\right\vert
^{2}dx-2\int_{\Omega }Vf_{+}^{2}dx  \label{Xf}
\end{equation}%
and%
\begin{equation}
Y\left( f\right) =\int_{\Omega }\left\vert \nabla f_{-}\right\vert
^{2}dx-2\int_{\Omega }Vf_{-}^{2}dx,  \label{Yf}
\end{equation}%
where $f_{\pm }=\frac{1}{2}\left( \left\vert f\right\vert \pm f\right) $ are
the positive and negative parts of $f$.  Clearly, we have%
\begin{equation*}
X\left( f\right) +Y\left( f\right) =\int_{\Omega }\left\vert \nabla
f\right\vert ^{2}dx-2\int_{\Omega }Vf^{2}dx=\mathcal{E}_{2V,\Omega }\left(
f\right) \leq 0.
\end{equation*}%
Let us show that in fact a strict inequality holds for all $f\in \mathcal{V}%
\setminus \left\{ 0\right\} $:%
\begin{equation}
X\left( f\right) +Y\left( f\right) <0.  \label{X+Y<0}
\end{equation}%
Indeed, if this is not true, that is,%
\begin{equation}
\int_{\Omega }\left\vert \nabla f\right\vert ^{2}dx\geq 2\int_{\Omega
}Vf^{2}dx,  \label{22}
\end{equation}%
then combining with%
\begin{equation*}
2\int_{\Omega }\left\vert \nabla f\right\vert ^{2}dx\leq 2\int_{\Omega
}Vf^{2}dx,
\end{equation*}%
we obtain $\int_{\Omega }\left\vert \nabla f\right\vert ^{2}dx=0$ and,
hence, $f=\func{const}$ in $\Omega .$ Then (\ref{22}) implies $V=0$ in $%
\Omega $, which is not possible by the assumption $\func{Neg}\left( V,\Omega
\right) >1$. This proves (\ref{X+Y<0}).

A second observation that we need is the identities%
\begin{equation}
X\left( -f\right) =Y\left( f\right) \ \ \text{and\ \ \ }Y\left( -f\right)
=X\left( f\right) ,  \label{X-Y}
\end{equation}%
that follow immediately from the definitions (\ref{Xf}), (\ref{Yf}).

Now consider a mapping $F:\mathcal{V}\rightarrow \mathbb{R}^{2}$ given by%
\begin{equation*}
F\left( f\right) =\left( X\left( f\right) ,Y\left( f\right) \right) .
\end{equation*}%
Let $T$ be the unit circle in $\mathcal{V}$ (with respect some arbitrary
norm in $\mathcal{V}$). Then the image $F\left( T\right) $ is a compact
connected subset of $\mathbb{R}^{2}$ that by (\ref{X+Y<0}) lies in the
half-plane $\left\{ x+y<0\right\} $, and by  (\ref{X-Y}) is symmetric in the
diagonal $x=y.$ It follows that there is a point in $F\left( T\right) $ that
lies on the diagonal $x=y$, that is, there is a function $f\in \mathcal{V}%
\setminus \left\{ 0\right\} $ such that%
\begin{equation*}
X\left( f\right) =Y\left( f\right) <0.
\end{equation*}%
This can be rewritten in the form%
\begin{equation*}
\mathcal{E}_{2V,\Omega }\left( f_{+}\right) =\mathcal{E}_{2V,\Omega }\left(
f_{-}\right) <0.
\end{equation*}%
Since%
\begin{equation*}
\mathcal{E}_{2V,\Omega }\left( f\wedge n\right) \rightarrow \mathcal{E}%
_{2V,\Omega }\left( f\right) \ \ \ \text{as }n\rightarrow +\infty ,\ 
\end{equation*}%
it follows that there is large enough $n$ such that%
\begin{equation*}
\mathcal{E}_{2V,\Omega }\left( f_{+}\wedge n\right) <0,\ \ \mathcal{E}%
_{2V,\Omega }\left( f_{-}\wedge n\right) <0.
\end{equation*}%
The functions $f_{+}\wedge n$ and $f_{-}\wedge n$ are bounded and have
\textquotedblleft almost\textquotedblright\ disjoint supports. It follows
that $\mathcal{E}_{2V,\Omega }\left( f\right) \leq 0$ holds for all linear
combinations $f$ of these two functions. Hence, we obtain a two dimensional
subspace of $\mathcal{F}_{2V,\Omega }^{b}$ where $\mathcal{E}_{2V,\Omega
}\leq 0$, which implies $\func{Neg}^{b}\left( 2V,\Omega \right) \geq 2$.
This contradiction finishes the proof. 
\end{proof}

\section{$L^{p}$-estimate in bounded domains}

\label{SecBdd}\setcounter{equation}{0}In this section we obtain upper bound
for $\func{Neg}\left( V,\Omega \right) $ for certain bounded domains $\Omega
\subset \mathbb{R}^{2}$.

\subsection{Extension of functions from $\mathcal{F}_{V,\Omega }$}

\label{SecL2loc}Here we consider auxiliary techniques for extending
functions from $\mathcal{F}_{V,\Omega }$ to larger domains. Denote by $%
D_{r}\left( x\right) $ an open disk in $\mathbb{R}^{2}$ of radius $r$
centered at $x$.

\begin{lemma}
\label{LemL2loc}Let $\Omega $ be a domain in $\mathbb{R}^{2}$ with piecewise
smooth boundary. Then $\mathcal{F}_{V,\Omega }\subset L_{loc}^{2}\left( 
\overline{\Omega }\right) $, where $\overline{\Omega }$ is the closure of $%
\Omega .$ If in addition $\Omega $ is bounded then $\mathcal{F}_{V,\Omega
}\subset L^{2}\left( \Omega \right) .$
\end{lemma}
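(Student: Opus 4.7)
The inclusion $\mathcal{F}_{V,\Omega}\subset L^{2}_{loc}(\Omega)$ is built into the definition of $\mathcal{F}_{V,\Omega}$, so the content of the statement is to control $\|f\|_{L^{2}}$ on bounded subsets that touch $\partial\Omega$. Since the boundary is piecewise smooth, each $x_{0}\in\partial\Omega$ admits a radius $r>0$ such that $U:=\Omega\cap D_{r}(x_{0})$ is a connected bounded Lipschitz subdomain of $\Omega$. By covering a compact subset $K\subset\overline{\Omega}$ by finitely many interior disks (which are handled by $L^{2}_{loc}(\Omega)$) and finitely many such $U$'s, the first assertion reduces to showing $f\in L^{2}(U)$ for each such $U$. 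The second assertion (for $\Omega$ bounded) is obtained by running the same argument with $U=\Omega$ on each connected component.

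\smallskip

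\noindent For the key local step, fix an auxiliary disk $D$ with $\overline{D}\subset U$; such a $D$ exists since $U$ is open and nonempty. Because $f\in L^{2}_{loc}(\Omega)$, the average $c:=\fint_{D}f\,dx$ is finite. For each $M>0$ introduce the truncation
\[
f_{M}:=\max(-M,\min(f,M))\in L^{\infty}(U)\cap W^{1,2}(U),
\]
which satisfies $|\nabla f_{M}|\leq|\nabla f|$ pointwise a.e. The Poincar\'e--Wirtinger inequality on the bounded Lipschitz domain $U$, applied with average taken over $D$, gives a constant $C=C(U,D)$ with
\[
\|f_{M}-(f_{M})_{D}\|_{L^{2}(U)}\leq C\|\nabla f_{M}\|_{L^{2}(U)}\leq C\|\nabla f\|_{L^{2}(\Omega)}.
\]
Dominated convergence on $D$ (with dominator $|f|\in L^{2}(D)\subset L^{1}(D)$) yields $(f_{M})_{D}\to c$ as $M\to\infty$, so $\|f_{M}\|_{L^{2}(U)}$ stays uniformly bounded in $M$. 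Since $|f_{M}|\nearrow|f|$ a.e.\ on $U$, Fatou's lemma gives $f\in L^{2}(U)$, as desired.

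\smallskip

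\noindent The main obstacle is the apparent circularity: one would like to apply the Poincar\'e inequality directly to $f$, but we have no a priori bound on $\|f\|_{L^{2}(U)}$, and in two dimensions finite Dirichlet energy does not by itself imply local square-integrability up to the boundary. The truncation $f\mapsto f_{M}$ is what restores admissibility in $W^{1,2}(U)$, while anchoring the mean on the \emph{interior} disk $D$ (rather than on $U$ itself) is what lets the $L^{2}_{loc}(\Omega)$ hypothesis control $(f_{M})_{D}$ uniformly in $M$. The piecewise-smooth boundary hypothesis enters only to ensure that $U$ is a Lipschitz domain so that the Poincar\'e--Wirtinger inequality is available with a finite constant.
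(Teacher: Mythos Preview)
Your proof is correct and follows essentially the same route as the paper: localize near a boundary point to a small Lipschitz subdomain $U=\Omega\cap D_r(x_0)$, fix an interior disk, and invoke a Poincar\'e-type inequality anchored on that disk. The paper simply asserts the inequality $\int_U f^2 \le C\int_U |\nabla f|^2 + C\int_K f^2$ for $f\in W^{1,2}_{loc}(U)$ and reads off the conclusion, whereas your truncation-plus-Fatou argument is a more careful justification of precisely that step.
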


\begin{proof}
Fix a point $x\in \partial \Omega $ and consider the domain $U=\Omega \cap
D_{r}\left( x\right) $ where $r>0$ is sufficiently small. It suffices to
verify that 
\begin{equation}
f\in L_{loc}^{2}\left( \Omega \right) ,\ \ \nabla f\in L^{2}\left( \Omega
\right) \ \Rightarrow \ f\in L^{2}\left( U\right) .  \label{fff}
\end{equation}%
Choose a little disk $K$ inside $U.$ For any function $f\in
W_{loc}^{1,2}\left( U\right) $ we have the following Poincar\'{e} type
inequality:\label{rem: reference to this Poincare type inequality}%
\begin{equation}
\int_{U}f^{2}dx\leq C\int_{U}\left\vert \nabla f\right\vert
^{2}dx+C\int_{K}f^{2}dx  \label{ukf}
\end{equation}%
where $C=C\left( K,U\right) .$ Since the right hand side of (\ref{ukf}) is
finite by hypotheses, it follows that $f\in L^{2}\left( U\right) $, which
was to be proved.
\end{proof}

Lemma \ref{LemL2loc} can be used to extend functions from $\mathcal{F}%
_{V,\Omega }$ to $\mathcal{F}_{V,\Omega ^{\prime }}$ where $\Omega ^{\prime
} $ is a larger domain. Any potential $V$ in a domain $\Omega $ can be
extended to a larger domain $\Omega ^{\prime }\ $by setting $V=0$ outside $%
\Omega $. We will refer to such an extension as a trivial one.

Let us give two examples, which will be frequently used in the next
sections. In all cases we assume that $V$ is trivially extended from $\Omega 
$ to $\Omega ^{\prime }.$

\begin{example}
\RM\label{ExL}Let $\Omega $ be a rectangle and let $L$ be one of its sides.
Merging $\Omega $ with its image under the axial symmetry around $L$, we
obtain a larger rectangle $\Omega ^{\prime }$. Any function $f$ on $\Omega $
can be extended to $\Omega ^{\prime }$ using push-forward under the axial
symmetry. We claim that if $f\in \mathcal{F}_{V,\Omega }$ then the extended
function $f$ belongs to $\mathcal{F}_{V,\Omega ^{\prime }}.$ By Lemma \ref%
{LemL2loc} we have $f\in L^{2}\left( \Omega \right) $ and, hence, $f\in
W^{1,2}\left( \Omega \right) $. It is well-known that if a $W^{1,2}$
function extends by axial symmetry then the resulting function is again from 
$W^{1,2}$, which implies that $f\in \mathcal{F}_{V,\Omega ^{\prime }}.$
\end{example}

\begin{example}
\RM\label{ExC}Let $\Omega $ be a sector of a disk $D_{r}\left( x_{0}\right) $
and let $C$ be a circular part of $\partial U$. Let us merge $\Omega $ with
its image under the inversion in $C$ and denote the resulting wedge by $%
\Omega ^{\prime }$. Extend any function $f$ from $\Omega $ to $\Omega
^{\prime }$ using push-forward under the inversion. Let us show that if $%
f\in \mathcal{F}_{V,\Omega }$ then the extended function $f$ belongs to $%
\mathcal{F}_{V,\Omega ^{\prime }}.$ Set $U=\Omega \setminus \overline{%
D_{\varepsilon }\left( x_{0}\right) }$ with some $\varepsilon >0$ so that $U$
is away from the center of inversion. Let $U^{\prime }$ be obtained by
merging $U$ with its image under inversion. By Lemma \ref{LemL2loc}, any
function $f\in \mathcal{F}_{V,\Omega }$ belongs to $L^{2}\left( U\right) $
and, hence, to $W^{1,2}\left( U\right) .$ Since $U^{\prime }$ is bounded,
the extended function $f$ belongs also to $W^{1,2}\left( U^{\prime }\right) $%
, which implies that $f\in W_{loc}^{1,2}\left( \Omega ^{\prime }\right) .$
By the conformal invariance of the Dirichlet integral we have%
\begin{equation*}
\int_{\Omega ^{\prime }\setminus \Omega }\left\vert \nabla f\right\vert
^{2}dx=\int_{\Omega }\left\vert \nabla f\right\vert ^{2}dx,
\end{equation*}%
which implies that $\int_{\Omega ^{\prime }}\left\vert \nabla f\right\vert
^{2}dx<\infty $ and, hence, $f\in \mathcal{F}_{V,\Omega ^{\prime }}.$

\label{Sechalf}Let $H_{+}=\left\{ \left( x_{1},x_{2}\right) \in \mathbb{R}%
^{2}:x_{2}>0\right\} $ be an upper half-plane.
\end{example}

\begin{lemma}
\label{LemH+}For any potential $V$ in $H_{+}$, we have%
\begin{equation}
\func{Neg}\left( V,H_{+}\right) \leq \func{Neg}\left( 2V,\mathbb{R}%
^{2}\right) ,  \label{NegH+}
\end{equation}%
assuming that $V$ is trivially extended from $H_{+}$ to $\mathbb{R}^{2}$.
\end{lemma}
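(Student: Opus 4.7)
The natural plan is to use even reflection across the $x_1$-axis and then invoke Lemma \ref{LemL}(a). Given any $f\in\mathcal{F}_{V,H_{+}}$, define its symmetric extension $\widetilde{f}:\mathbb{R}^{2}\to\mathbb{R}$ by
\[
\widetilde{f}(x_{1},x_{2})=f(x_{1},|x_{2}|).
\]
This gives a linear injective map $\mathcal{L}:\mathcal{F}_{V,H_{+}}\to L^{2}_{loc}(\mathbb{R}^{2})$, since recovering $f$ from $\widetilde{f}$ is just restriction to $H_{+}$.

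Next I would verify that $\widetilde{f}\in\mathcal{F}_{2V,\mathbb{R}^{2}}$. Applying the axial-symmetry extension argument of Example \ref{ExL} locally to each rectangle abutting the $x_{1}$-axis (using Lemma \ref{LemL2loc} to get $f\in L^{2}_{loc}(\overline{H_{+}})$ across the boundary), one concludes $\widetilde{f}\in W^{1,2}_{loc}(\mathbb{R}^{2})$ with $|\nabla\widetilde{f}|\in L^{2}$; the bulk identity
\[
\int_{\mathbb{R}^{2}}|\nabla\widetilde{f}|^{2}dx=2\int_{H_{+}}|\nabla f|^{2}dx
\]
follows because $\widetilde{f}$ is the mirror image of $f$ on the lower half-plane and the axis has measure zero. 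Since $V$ has been extended trivially (so $V\equiv0$ outside $H_{+}$),
\[
\int_{\mathbb{R}^{2}}V\widetilde{f}^{2}dx=\int_{H_{+}}Vf^{2}dx<\infty,
\]
which confirms $\widetilde{f}\in\mathcal{F}_{2V,\mathbb{R}^{2}}$.

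Combining the two identities,
\[
\mathcal{E}_{2V,\mathbb{R}^{2}}(\widetilde{f})=\int_{\mathbb{R}^{2}}|\nabla\widetilde{f}|^{2}dx-2\int_{\mathbb{R}^{2}}V\widetilde{f}^{2}dx=2\int_{H_{+}}|\nabla f|^{2}dx-2\int_{H_{+}}Vf^{2}dx=2\,\mathcal{E}_{V,H_{+}}(f).
\]
Hence $\mathcal{E}_{V,H_{+}}(f)\leq 0$ forces $\mathcal{E}_{2V,\mathbb{R}^{2}}(\widetilde{f})\leq 0$, and Lemma \ref{LemL}(a) applied to $\mathcal{L}$ yields (\ref{NegH+}).

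The only real subtlety is the $W^{1,2}$-regularity of $\widetilde{f}$ across the reflection axis; this is precisely the content packaged in Example \ref{ExL}, and applies here after localizing to strips along $\{x_{2}=0\}$. Everything else is a direct computation plus one appeal to the monotonicity principle of Lemma \ref{LemL}.
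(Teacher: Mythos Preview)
Your proof is correct and follows essentially the same approach as the paper's own proof: even reflection across the $x_1$-axis, the local $W^{1,2}$-regularity across the axis via Lemma~\ref{LemL2loc} and Example~\ref{ExL}, the two integral identities, and the final appeal to Lemma~\ref{LemL}. The paper invokes Lemma~\ref{LemL} without specifying part~(a) or~(b); your explicit computation of $\mathcal{E}_{2V,\mathbb{R}^{2}}(\widetilde{f})=2\,\mathcal{E}_{V,H_{+}}(f)$ makes the application of part~(a) transparent, but the content is identical.
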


\begin{proof}
Any function $f\in \mathcal{F}_{V,H_{+}}$ can be extended to a function $f$
on $\mathbb{R}^{2}$ by the axial symmetry around the axis $x_{1}$. Since by
Lemma \ref{LemL2loc} $f\in L^{2}\left( U\right) $ for any bounded open
subset $U$ of $H_{+}$, in particular, for any rectangle $U$ attached to $%
\partial H_{+}$, we obtain as in Example \ref{ExL} that $f\in
W_{loc}^{1,2}\left( \mathbb{R}^{2}\right) $. Since also 
\begin{equation*}
\int_{\mathbb{R}^{2}}\left\vert \nabla f\right\vert
^{2}dx=2\int_{H_{+}}\left\vert \nabla f\right\vert ^{2}dx
\end{equation*}%
and 
\begin{equation*}
\int_{\mathbb{R}^{2}}Vf^{2}dx=\int_{H_{+}}Vf^{2}dx,
\end{equation*}%
we see that $f\in \mathcal{F}_{V,\mathbb{R}^{2}}$, and the estimate (\ref%
{NegH+}) follows by Lemma \ref{LemL}.
\end{proof}

Let $D_{r}=D_{r}\left( 0\right) $ be an open disk of radius $r$ centered at
the origin.

\begin{lemma}
\label{Lem2D}For any potential $V$ in a disk $D_{r}$, 
\begin{equation}
\func{Neg}\left( V,\mathbb{R}^{2}\right) \leq \func{Neg}\left(
2V,D_{r}\right) ,  \label{NegDr}
\end{equation}%
assuming that $V$ is trivially extended from $D$ to $\mathbb{R}^{2}$.
\end{lemma}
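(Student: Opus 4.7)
The plan is to use the restriction map in analogy with the axial-symmetric extension argument of Lemma \ref{LemH+}, but reversed: rather than extending outward, one restricts inward. Define the linear map $\mathcal{L}: \mathcal{F}_{V,\mathbb{R}^{2}}\to\mathcal{F}_{2V,D_{r}}$ by $\mathcal{L}(f)=f|_{D_{r}}$. The first step is to verify that the restriction indeed lies in $\mathcal{F}_{2V,D_{r}}$, which is immediate from the corresponding properties on $\mathbb{R}^{2}$: local $L^{2}$-integrability on $D_{r}$ follows from $f\in L_{loc}^{2}(\mathbb{R}^{2})$, while $\int_{D_{r}}|\nabla f|^{2}dx$ and $\int_{D_{r}}2Vf^{2}dx$ are each dominated by the ambient finite quantities.

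The energy step crucially uses that $V$ vanishes outside $D_{r}$ after trivial extension. The hypothesis $\mathcal{E}_{V,\mathbb{R}^{2}}(f)\leq 0$ unfolds to
\begin{equation*}
\int_{\mathbb{R}^{2}}|\nabla f|^{2}dx\leq \int_{\mathbb{R}^{2}}Vf^{2}dx=\int_{D_{r}}Vf^{2}dx,
\end{equation*}
so dropping the part of the gradient integral coming from $\mathbb{R}^{2}\setminus D_{r}$ yields the stronger-than-needed bound $\int_{D_{r}}|\nabla f|^{2}dx\leq \int_{D_{r}}Vf^{2}dx$. Consequently
\begin{equation*}
\mathcal{E}_{2V,D_{r}}(\mathcal{L}(f))=\int_{D_{r}}|\nabla f|^{2}dx-2\int_{D_{r}}Vf^{2}dx\leq -\int_{D_{r}}Vf^{2}dx\leq 0.
\end{equation*}

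To invoke Lemma \ref{LemL}(a) I need $\mathcal{L}$ to be injective on any subspace $\mathcal{V}\prec\mathcal{F}_{V,\mathbb{R}^{2}}$ with $\mathcal{E}_{V,\mathbb{R}^{2}}\leq 0$; this is precisely where the argument is delicate, since restriction is of course not globally injective. The energy inequality saves us: if $f|_{D_{r}}=0$ for some $f\in\mathcal{V}$ then $\int_{D_{r}}Vf^{2}dx=0$, which forces $\int_{\mathbb{R}^{2}}|\nabla f|^{2}dx=0$, so $f$ is a.e.\ constant on $\mathbb{R}^{2}$; since $f$ vanishes on the set $D_{r}$ of positive measure, that constant must be $0$, giving $f\equiv 0$.

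Applying Lemma \ref{LemL}(a) to $\mathcal{L}$ (or, more precisely, repeating its one-line proof with injectivity only on the relevant subspaces) then yields $\func{Neg}(V,\mathbb{R}^{2})\leq \func{Neg}(2V,D_{r})$. The argument is essentially routine; the only genuine subtlety is the injectivity check, which is handled automatically by the energy condition combined with the vanishing of $V$ outside $D_{r}$.
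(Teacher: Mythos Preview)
Your proof is correct and, in fact, yields the sharper bound $\func{Neg}(V,\mathbb{R}^{2})\leq\func{Neg}(V,D_{r})$ without the factor $2$: the chain $\int_{D_{r}}|\nabla f|^{2}\leq\int_{\mathbb{R}^{2}}|\nabla f|^{2}\leq\int_{D_{r}}Vf^{2}$ already gives $\mathcal{E}_{V,D_{r}}(f|_{D_{r}})\leq 0$. Your handling of injectivity is also exactly right: inspecting the proof of Lemma~\ref{LemL}(a), one sees that only injectivity of $\mathcal{L}$ on each finite-dimensional subspace $\mathcal{V}$ with $\mathcal{E}_{V,\mathbb{R}^{2}}\leq 0$ is needed, and your ``constant forced to zero'' argument delivers precisely this.

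The paper's argument runs in the opposite direction: it extends $f\in\mathcal{F}_{V,D_{r}}$ to $\mathbb{R}^{2}$ by inversion in $\partial D_{r}$ and records $\int_{\mathbb{R}^{2}}|\nabla f|^{2}=2\int_{D_{r}}|\nabla f|^{2}$ and $\int_{\mathbb{R}^{2}}Vf^{2}=\int_{D_{r}}Vf^{2}$. Feeding these relations into Lemma~\ref{LemL} yields $\func{Neg}(V,D_{r})\leq\func{Neg}(2V,\mathbb{R}^{2})$, which is the \emph{reverse} of the stated inequality; the proof as written appears to contain a slip, perhaps from adapting the template of Lemma~\ref{LemH+} without swapping the roles of source and target. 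Your restriction map is the natural way to produce a map $\mathcal{F}_{V,\mathbb{R}^{2}}\to\mathcal{F}_{2V,D_{r}}$ in the required direction, is more elementary than the inversion construction, and as a bonus renders the factor $2$ superfluous.
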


\begin{proof}
Any function $f\in \mathcal{F}_{V,D_{r}}$ can be extended to a function $%
f\in \mathcal{F}_{V,\mathbb{R}^{2}}$ using inversion in the circle $\left\{
\left\vert x\right\vert =r\right\} $ as in Example \ref{ExC}. Then we have%
\begin{equation*}
\int_{\mathbb{R}^{2}}\left\vert \nabla f\right\vert
^{2}dx=2\int_{D_{r}}\left\vert \nabla f\right\vert ^{2}dx
\end{equation*}%
and 
\begin{equation*}
\int_{\mathbb{R}^{2}}Vf^{2}dx=\int_{D_{r}}Vf^{2}dx,
\end{equation*}%
which implies (\ref{NegDr}) by Lemma \ref{LemL}.
\end{proof}

A more complicated result analogous to Lemmas \ref{LemH+} and \ref{Lem2D}
will be considered in Section \ref{SecRec}.

\subsection{One negative eigenvalue in a disc}

Let $D=\left\{ \left\vert x\right\vert <1\right\} $ be the open unit disk in 
$\mathbb{R}^{2}$.

\begin{lemma}
\label{LemNeg=1}For any $p>1$ there is $\varepsilon >0$ such that, for any
potential $V$ in $D$, 
\begin{equation}
\left\Vert V\right\Vert _{L^{p}\left( D\right) }\leq \varepsilon \
\Rightarrow \ \func{Neg}\left( V,D\right) =1.  \label{eD}
\end{equation}
\end{lemma}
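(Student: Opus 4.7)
The plan is to prove $\func{Neg}(V,D) = 1$ by showing the two inequalities separately. The lower bound $\func{Neg}(V,D) \geq 1$ has already been established in Section \ref{SecGen} (since $V \in L^p(D) \subset L^1(D)$ on the bounded domain $D$, the constant function $1$ lies in $\mathcal{F}_{V,D}$ and yields $\mathcal{E}_{V,D}(1) \leq 0$). So the work is in the upper bound $\func{Neg}(V,D) \leq 1$, which I would prove by contradiction.

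Suppose $\func{Neg}(V,D) \geq 2$. Then there exists a two-dimensional subspace $\mathcal{V} \prec \mathcal{F}_{V,D}$ on which $\mathcal{E}_{V,D} \leq 0$. By Lemma \ref{LemL2loc}, every $f \in \mathcal{F}_{V,D}$ belongs to $L^2(D)$, hence to $W^{1,2}(D)$, so the linear functional $f \mapsto \int_D f\,dx$ is well defined on $\mathcal{V}$. Its kernel in $\mathcal{V}$ has dimension at least one, so I can pick a nonzero $f \in \mathcal{V}$ with mean zero on $D$. The goal is to show $\mathcal{E}_{V,D}(f) > 0$, which contradicts $\mathcal{E}_{V,D}(f) \leq 0$.

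For mean-zero $f \in W^{1,2}(D)$, the Poincaré inequality together with the Sobolev embedding $W^{1,2}(D) \hookrightarrow L^q(D)$ (valid in two dimensions for every finite $q$) yields a constant $C_p$, depending only on $p$, such that
\begin{equation*}
\|f\|_{L^{2p/(p-1)}(D)} \leq C_p \,\|\nabla f\|_{L^2(D)}.
\end{equation*}
Hölder's inequality with conjugate exponents $p$ and $p/(p-1)$ then gives
\begin{equation*}
\int_D V f^2\,dx \leq \|V\|_{L^p(D)}\,\|f^2\|_{L^{p/(p-1)}(D)} = \|V\|_{L^p(D)}\,\|f\|_{L^{2p/(p-1)}(D)}^2 \leq \varepsilon\, C_p^2\,\|\nabla f\|_{L^2(D)}^2.
\end{equation*}
Choosing $\varepsilon := 1/(2C_p^2)$, I obtain $\mathcal{E}_{V,D}(f) \geq \tfrac{1}{2}\|\nabla f\|_{L^2(D)}^2$. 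Since $f$ has mean zero and is nonzero, $\nabla f$ cannot vanish identically, so $\mathcal{E}_{V,D}(f) > 0$, a contradiction.

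The only delicate point is the Poincaré–Sobolev step: I need the mean-zero inequality on the specific domain $D$ and I need its use on $\mathcal{F}_{V,D}$-functions (rather than only smooth ones) to be legitimate. Both are handled once Lemma \ref{LemL2loc} places $\mathcal{F}_{V,D}$ inside $W^{1,2}(D)$; the constant $C_p$ depends only on $p$ (through the Sobolev exponent $2p/(p-1)$) and the fixed geometry of $D$, so the threshold $\varepsilon$ depends only on $p$, as required.
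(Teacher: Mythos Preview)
Your proof is correct and reaches the same conclusion as the paper, but by a more direct route. The paper's argument extends $u$ from $D$ to all of $\mathbb{R}^{2}$ by inversion in $\partial D$, multiplies by a cutoff $\varphi$ to obtain a compactly supported function $u^{\ast}$, and then applies the Gagliardo--Nirenberg--Sobolev inequality for compactly supported functions in $\mathbb{R}^{2}$ (derived there from $W^{1,1}\hookrightarrow L^{\alpha}$ by the standard substitution $f\mapsto f^{\beta}$). You bypass this extension--cutoff machinery entirely by invoking the Sobolev embedding $W^{1,2}(D)\hookrightarrow L^{2p/(p-1)}(D)$ on the bounded Lipschitz domain $D$ directly, together with the Poincar\'e inequality for mean-zero functions. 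Both arguments hinge on the same Poincar\'e step (the paper uses it in its Claim~2) and the same H\"older pairing of $V$ with $f^{2}$; the difference is only in how the $L^{2p/(p-1)}$ control of $f$ is obtained. Your version is shorter and cleaner, while the paper's version is more self-contained in that it does not cite the Sobolev embedding on domains as a black box. The use of Lemma~\ref{LemL2loc} to place $\mathcal{F}_{V,D}$ inside $W^{1,2}(D)$ is exactly right and is what makes your direct approach legitimate.
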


\begin{proof}
Extend $V$ to entire $\mathbb{R}^{2}$ by setting $V\left( x\right) =0$ for
all $\left\vert x\right\vert \geq 1$. Given a function $u\in \mathcal{F}%
_{V,D}$, extend $u$ to the entire $\mathbb{R}^{2}$ using the inversion $\Phi
\left( x\right) =\frac{x}{\left\vert x\right\vert ^{2}}$: for any $%
\left\vert x\right\vert >1$, set $u\left( x\right) =u\left( \Phi \left(
x\right) \right) $. As in Example \ref{ExC}, we have $u\in \mathcal{F}_{V,%
\mathbb{R}^{2}}$. By the conformal invariance of the Dirichlet integral, we
have%
\begin{equation}
\int_{\mathbb{R}^{2}}\left\vert \nabla u\right\vert
^{2}dx=2\int_{D}\left\vert \nabla u\right\vert ^{2}dx.  \label{2D}
\end{equation}%
Choose a cutoff function $\varphi $ such that $\varphi |_{D_{2}}\equiv 1,$ $%
\varphi |_{\mathbb{R}^{2}\setminus D_{3}}=0$ and $\varphi =\varphi \left(
\left\vert x\right\vert \right) $ is linear in $\left\vert x\right\vert $ in 
$D_{3}\setminus D_{2}$, and define a function $u^{\ast }$ by%
\begin{equation*}
u^{\ast }=u\varphi .
\end{equation*}%
Then $u^{\ast }\in W^{1,2}\left( \mathbb{R}^{2}\right) $ and $u^{\ast }$
vanishes outside $D_{3}$. Next, we prove some estimates for the function $%
u^{\ast }.$

\begin{claime}[Claim 1.]
We have%
\begin{equation}
\int_{D_{3}}\left\vert \nabla u^{\ast }\right\vert ^{2}dx\leq
4\int_{D}\left\vert \nabla u\right\vert ^{2}dx+162\int_{D}u^{2}dx.
\label{u*}
\end{equation}
\end{claime}

Indeed, since $\nabla u^{\ast }=\varphi \nabla u+u\nabla \varphi $, we have%
\begin{eqnarray*}
\int_{D_{3}}\left\vert \nabla u^{\ast }\right\vert ^{2}dx &\leq
&2\int_{D_{3}}\varphi ^{2}\left\vert \nabla u\right\vert
^{2}dx+2\int_{D_{3}}u^{2}\left\vert \nabla \varphi \right\vert ^{2}dx \\
&\leq &2\int_{\mathbb{R}^{2}}\left\vert \nabla u\right\vert
^{2}dx+2\int_{D_{3}\setminus D_{2}}u^{2}dx,
\end{eqnarray*}%
where we have used that $\left\vert \nabla \varphi \right\vert =1$ in $%
D_{3}\setminus D_{2}$ and $\nabla \varphi =0$ otherwise. Next, use the
change $y=\Phi \left( x\right) $ to map $D_{3}\setminus D_{2}$ to $%
D_{1/2}\setminus D_{1/3}$. Since $\left\vert J_{\Phi ^{-1}}\left( y\right)
\right\vert =$ $\frac{1}{\left\vert y\right\vert ^{4}}$, we obtain 
\begin{equation*}
\int_{D_{3}\setminus D_{2}}u^{2}\left( x\right) dx=\int_{D_{1/2}\setminus
D_{1/3}}u^{2}\left( y\right) \frac{1}{\left\vert y\right\vert ^{4}}dy\leq
3^{4}\int_{D}u^{2}dy.
\end{equation*}%
Combining the above estimates and using also (\ref{2D}), we obtain (\ref{u*}%
).

\begin{claime}[Claim 2.]
If $u\bot 1$ in $L^{2}\left( D\right) $ and $\mathcal{E}_{V,D}\left(
u\right) \leq 0$ then 
\begin{equation}
\int_{D_{4}}\left\vert \nabla u^{\ast }\right\vert ^{2}dx\leq
C\int_{D}Vu^{2}dx,  \label{uV}
\end{equation}%
with some absolute constant $C$.
\end{claime}

Indeed, the assumption $u\bot 1$ implies by the Poincar\'{e} inequality%
\begin{equation*}
\int_{D}u^{2}dx\leq c\int_{D}\left\vert \nabla u\right\vert ^{2}dx,
\end{equation*}%
which together with (\ref{u*}) yields%
\begin{equation*}
\int_{D_{4}}\left\vert \nabla u^{\ast }\right\vert ^{2}dx\leq \left(
4+162c\right) \int_{D}\left\vert \nabla u\right\vert ^{2}dx.
\end{equation*}%
Combining this with the hypothesis $\mathcal{E}_{V}\left( u\right) \leq 0,$
that is, 
\begin{equation}
\int_{D}\left\vert \nabla u\right\vert ^{2}dx\leq \int_{D}Vu^{2}dx,
\label{Vu1}
\end{equation}%
we obtain (\ref{uV}).

Now we prove the implication (\ref{eD}). Applying the H\"{o}lder inequality
to the right hand side of (\ref{uV}), we obtain%
\begin{eqnarray}
\int_{D}Vu^{2}dx &\leq &\left( \int_{D}V^{p}dx\right) ^{1/p}\left(
\int_{D}\left\vert u\right\vert ^{\frac{2p}{p-1}}dx\right) ^{1-1/p}  \notag
\\
&\leq &\left( \int_{D}V^{p}dx\right) ^{1/p}\left( \int_{D_{4}}\left\vert
u^{\ast }\right\vert ^{\frac{2p}{p-1}}dx\right) ^{1-1/p}.  \label{Vu2}
\end{eqnarray}%
Next, let us use Sobolev inequality for Lipschitz functions $f$ supported in 
$\overline{D_{3}}$:%
\begin{equation*}
\left( \int_{D_{3}}\left\vert f\right\vert ^{\alpha }dx\right) ^{1/\alpha
}\leq C\int_{D_{3}}\left\vert \nabla f\right\vert dx
\end{equation*}%
where $\alpha \in (1,2)$ is arbitrary and $C=C\left( \alpha \right) .$
Replacing $f$ by $f^{\beta }$ (where $\beta >1$), we obtain%
\begin{eqnarray*}
\left( \int_{D_{3}}\left\vert f\right\vert ^{\alpha \beta }dx\right)
^{1/\alpha } &\leq &C\int_{D_{3}}\left\vert \nabla f\right\vert \left\vert
f\right\vert ^{\beta -1}dx \\
&\leq &C\left( \int_{D_{3}}\left\vert \nabla f\right\vert ^{2}dx\right)
^{1/2}\left( \int_{D_{3}}\left\vert f\right\vert ^{2\left( \beta -1\right)
}dx\right) ^{1/2}.
\end{eqnarray*}%
Choosing $\beta $ to satisfy the identity $\alpha \beta =2\left( \beta
-1\right) $, that is, $\beta =\frac{2}{2-\alpha }$, we obtain%
\begin{equation}
\left( \int_{D_{3}}\left\vert f\right\vert ^{\frac{2\alpha }{2-\alpha }%
}dx\right) ^{\frac{2-\alpha }{\alpha }}\leq C\int_{D_{3}}\left\vert \nabla
f\right\vert ^{2}dx.  \label{fSob}
\end{equation}%
This inequality extends routinely to $W^{1,2}$ functions $f$ supported in $%
\overline{D_{3}}$. Applying (\ref{fSob}) with for $f=u^{\ast }$ with $\alpha
=\frac{2p}{2p-1}$ we obtain 
\begin{equation*}
\left( \int_{D_{3}}\left\vert u^{\ast }\right\vert ^{\frac{2p}{p-1}%
}dx\right) ^{1-1/p}\leq C\int_{D_{3}}\left\vert \nabla u^{\ast }\right\vert
^{2}dx,
\end{equation*}%
which together with (\ref{uV}), (\ref{Vu2}) yields%
\begin{equation}
\int_{D_{3}}\left\vert \nabla u^{\ast }\right\vert ^{2}dx\leq C\left(
\int_{D}V^{p}dx\right) ^{1/p}\int_{D_{3}}\left\vert \nabla u^{\ast
}\right\vert ^{2}dx.  \label{CDu}
\end{equation}%
Assuming that%
\begin{equation}
\left\Vert V\right\Vert _{L^{p}\left( D\right) }\leq \varepsilon :=\frac{1}{%
2C},  \label{VC}
\end{equation}%
we see that (\ref{CDu}) is only possible if $u^{\ast }=\func{const}.$ Since $%
u\bot 1$ in $L^{2}\left( D\right) $, it follows that $u\equiv 0.$

Hence, $\mathcal{E}_{V,D}\left( u\right) \leq 0$ and $u\bot 1$ imply $%
u\equiv 0,$ whence $\func{Neg}\left( V,D\right) \leq 1\ $follows.
\end{proof}

\begin{corollary}
\label{CoreOm}Let $\Omega $ be a bounded domain in $\mathbb{R}^{2}$ and $%
\Phi :D\rightarrow \Omega $ be a $C^{1}$-diffeomorphism with finite $M_{\Phi
}$ and $\sup \left\vert J_{\Phi }\right\vert $. Then there is $\varepsilon
_{\Omega }>0$ such that%
\begin{equation*}
\left\Vert V\right\Vert _{L^{p}\left( \Omega \right) }\leq \varepsilon
_{\Omega }\Rightarrow \func{Neg}\left( V,\Omega \right) =1,
\end{equation*}%
where $\varepsilon _{\Omega }$ depends on $p$, $M_{\Phi }$ and $\sup
\left\vert J_{\Phi }\right\vert $.

\label{CoreOmr}Consequently, if $\Omega $ is bilipschitz equivalent to $%
D_{r} $ then 
\begin{equation}
\left\Vert V\right\Vert _{L^{p}\left( \Omega \right) }\leq
cr^{2/p-2}\Rightarrow \func{Neg}\left( V,\Omega \right) =1,  \label{r}
\end{equation}%
where $c>0$ depends on $p$ and on the bilipschitz constant of the mapping
between $D_{r}$ and $\Omega $.
\end{corollary}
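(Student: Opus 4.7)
The plan is to transplant the problem from $\Omega$ to the unit disk $D$ via $\Phi$ and apply Lemma \ref{LemNeg=1}. Set $\Psi = \Phi^{-1} : \Omega \to D$ and use Lemma \ref{LemVW}(a) with $\widetilde{\Omega} = D$. This yields a push-forward potential
\begin{equation*}
\widetilde{V}(y) = M_\Phi \, |J_\Phi(y)| \, V(\Phi(y)) \qquad \text{on } D
\end{equation*}
together with the bound $\func{Neg}(V,\Omega) \le \func{Neg}(\widetilde{V},D)$. Since $\Omega$ is a bounded domain we also know $\func{Neg}(V,\Omega) \ge 1$, so it suffices to arrange $\func{Neg}(\widetilde{V},D) = 1$.

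For this I will estimate $\|\widetilde{V}\|_{L^p(D)}$ and invoke Lemma \ref{LemNeg=1}. Changing variables $x = \Phi(y)$ (so $dx = |J_\Phi(y)|\,dy$),
\begin{equation*}
\|\widetilde{V}\|_{L^p(D)}^p = M_\Phi^p \int_D |J_\Phi(y)|^p V(\Phi(y))^p \, dy = M_\Phi^p \int_\Omega |J_\Phi(\Phi^{-1}(x))|^{p-1} V(x)^p \, dx,
\end{equation*}
hence
\begin{equation*}
\|\widetilde{V}\|_{L^p(D)} \le M_\Phi \, \bigl(\sup |J_\Phi|\bigr)^{(p-1)/p} \, \|V\|_{L^p(\Omega)}.
\end{equation*}
Let $\varepsilon$ be the constant from Lemma \ref{LemNeg=1} and set
\begin{equation*}
\varepsilon_\Omega = \frac{\varepsilon}{M_\Phi \, (\sup |J_\Phi|)^{(p-1)/p}}.
\end{equation*}
Then $\|V\|_{L^p(\Omega)} \le \varepsilon_\Omega$ forces $\|\widetilde{V}\|_{L^p(D)} \le \varepsilon$, so $\func{Neg}(\widetilde{V},D) = 1$ and therefore $\func{Neg}(V,\Omega) = 1$.

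For the bilipschitz consequence, suppose $\Phi_1 : D_r \to \Omega$ is bilipschitz with constant $L$, and define $\Phi : D \to \Omega$ by $\Phi(y) = \Phi_1(ry)$. The chain rule gives $\Phi'(y) = r\,\Phi_1'(ry)$, so $M_\Phi(y) = \|\Phi'(y)\|^2 / |J_\Phi(y)| = M_{\Phi_1}(ry)$ is scale invariant; the bilipschitz estimates recalled in the text before Lemma \ref{LemVW} give $M_\Phi \le 8L^4$, and similarly $\sup|J_\Phi| \le 2L^2 r^2$. Plugging into the formula for $\varepsilon_\Omega$ yields $\varepsilon_\Omega \ge c\, r^{-2(p-1)/p} = c\, r^{2/p - 2}$, as claimed.

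There is no real obstacle here — the whole argument is a bookkeeping exercise in pushing a potential and a norm through a diffeomorphism, followed by a direct appeal to Lemma \ref{LemNeg=1}. The only care needed is in tracking the weights $M_\Phi$ and $|J_\Phi|$ correctly in the change-of-variables computation and in checking that the scaling $y \mapsto ry$ preserves the conformal-type quantity $M_\Phi$, which is what produces the clean exponent $r^{2/p-2}$.
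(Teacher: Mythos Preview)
Your proof is correct and follows essentially the same route as the paper: transport the potential to $D$ via Lemma~\ref{LemVW}, bound $\|\widetilde V\|_{L^p(D)}$ by a change of variables, and apply Lemma~\ref{LemNeg=1}. The only cosmetic differences are that the paper packages the change-of-variables computation as an application of Lemma~\ref{LemVW}(b) with $\widetilde W\equiv 1$, and for the bilipschitz part it first treats $\Omega=D_r$ separately and then compares a general $\Omega$ to $D_r$, whereas you compose the scaling with $\Phi_1$ directly; both arrive at the same constant $\varepsilon_\Omega$ and the same exponent $r^{2/p-2}$.
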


\begin{proof}
\label{Rem: shorten this}By Lemma \ref{LemVW}, we have%
\begin{equation*}
\func{Neg}\left( V,\Omega \right) \leq \func{Neg}(\widetilde{V},D),
\end{equation*}%
where $\widetilde{V}$ is given by (\ref{Vti}). By Lemma \ref{LemNeg=1}, 
\begin{equation*}
\Vert \widetilde{V}\Vert _{L^{p}\left( D\right) }\leq \varepsilon
\Rightarrow \func{Neg}(\widetilde{V},D)=1.
\end{equation*}%
Using the notation of Lemma \ref{LemVW}, set $\Psi =\Phi ^{-1},$ $\widetilde{%
W}\equiv 1$ and define a function $W\left( x\right) $ on $\Omega $ by (\ref%
{WW}), that is, 
\begin{equation*}
W\left( x\right) =M_{\Phi }^{p}\left\vert J_{\Psi }\left( x\right)
\right\vert ^{1-p}\leq M_{\Phi }^{p}\sup \left\vert J_{\Phi }\right\vert
^{p-1}.
\end{equation*}%
Then by (\ref{VW}) we have%
\begin{equation*}
\int_{D}\widetilde{V}\left( y\right) ^{p}dy=\int_{\Omega }V\left( x\right)
^{p}W\left( x\right) dx\leq M_{\Phi }^{p}\sup \left\vert J_{\Phi
}\right\vert ^{p-1}\int_{\Omega }V\left( x\right) ^{p}dx,
\end{equation*}%
whence%
\begin{equation*}
\Vert \widetilde{V}\Vert _{L^{p}\left( D\right) }\leq M_{\Phi }\sup
\left\vert J_{\Phi }\right\vert ^{\frac{p-1}{p}}\left\Vert V\right\Vert
_{L^{p}\left( \Omega \right) }.
\end{equation*}%
Therefore, if 
\begin{equation}
\left\Vert V\right\Vert _{L^{p}\left( \Omega \right) }\leq \varepsilon
_{\Omega }:=\frac{\varepsilon }{M_{\Phi }\sup \left\vert J_{\Phi
}\right\vert ^{\frac{p-1}{p}}},  \label{eOm}
\end{equation}%
\emph{\ }then $\Vert \widetilde{V}\Vert _{L^{p}\left( D\right) }\leq
\varepsilon ,$ which implies by the above argument $\func{Neg}\left(
V,\Omega \right) =1.$

Let $\Omega =D_{r}.$ Then, for the mapping $\Phi \left( x\right) =rx$, we
have $M_{\Phi }=1$ and $\left\vert J_{\Phi }\right\vert =r^{2}$ whence we
obtain%
\begin{equation}
\varepsilon _{D_{r}}=\varepsilon r^{2/p-2}.  \label{eDr}
\end{equation}%
More generally, assume that there is a bilipschitz mapping $\Phi
:D_{r}\rightarrow \Omega $ with a bilipschitz constant $L$. Arguing as in
the first part of the proof but using $D_{r}$ instead of $D$, we obtain
similarly to (\ref{eOm}) that $\varepsilon _{\Omega }$ can be determined by%
\begin{equation*}
\varepsilon _{\Omega }=\frac{\varepsilon _{D_{r}}}{M_{\Phi }\sup \left\vert
J_{\Phi }\right\vert ^{\frac{p-1}{p}}}\geq cr^{2/p-2},
\end{equation*}%
where $c>0$ depends on $p$ and $L$, which was to be proved.
\end{proof}

\subsection{Negative eigenvalues in a square}

Denote by $Q$ the unit square in $\mathbb{R}^{2}$, that is,%
\begin{equation*}
Q=\left\{ \left( x_{1},x_{2}\right) \in \mathbb{R}^{2}:0<x_{1}<1,0<x_{2}<1%
\right\} .
\end{equation*}

\begin{lemma}
\label{LemMain}For any $p>1$ and for any potential $V$ in $Q,$%
\begin{equation}
\func{Neg}\left( {V,Q}\right) \leq 1+C\left\Vert V\right\Vert _{L^{p}\left(
Q\right) },  \label{Neg<}
\end{equation}%
where $C$ depends only on $p$.
\end{lemma}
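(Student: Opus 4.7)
Write $\Lambda := \|V\|_{L^p(Q)}$. For a sub-rectangle $\Omega \subset Q$ of aspect ratio at most $2$, Corollary~\ref{CoreOmr} (applied with $r \asymp |\Omega|^{1/2}$) yields $\func{Neg}(V,\Omega) = 1$ as soon as the scale-invariant quantity
\[
F(\Omega) := |\Omega|^{(p-1)/p}\,\|V\|_{L^p(\Omega)}
\]
lies below a threshold $c_0 = c_0(p)$. My aim is to construct a finite partition $Q = \bigsqcup_{n=1}^{N}\Omega_n$ into such ``good'' rectangles with $N \le 1 + C\Lambda$; combined with Lemma~\ref{LemSub} this gives $\func{Neg}(V,Q) \le N \le 1 + C\Lambda$.

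The heart of the counting is H\"older's inequality applied in the form
\[
\sum_{n=1}^N F(\Omega_n) \le \Bigl(\sum_n |\Omega_n|\Bigr)^{(p-1)/p}\Bigl(\sum_n\|V\|_{L^p(\Omega_n)}^p\Bigr)^{1/p} = 1\cdot\Lambda = \Lambda,
\]
valid for any partition of $Q$. Consequently, as soon as the partition guarantees a uniform lower bound $F(\Omega_n) \ge c_1 > 0$ on every tile with at most one ``remainder'' tile exception, one obtains $N \le 1 + \Lambda/c_1 = 1 + C\Lambda$, as needed.

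To produce such a partition I would proceed by iterative bisection. Starting from $R_0 = Q$, while the current tile $R$ satisfies $F(R) > c_0$, cut $R$ in two by a line perpendicular to its longer side, with the cut position chosen via the intermediate value theorem so that the two children $R_1, R_2$ carry equal values of $F$. The same H\"older inequality applied at a single bisection gives $F(R_1) + F(R_2) \le F(R)$, so that equal $F$ forces $F(R_i) \le F(R)/2$ and the recursion terminates after $O(\log(\Lambda/c_0))$ levels along each branch. When the balancing cut lies in the ``safe'' interval (keeping both children of aspect ratio at most $2$), both children are admissible rectangles; when $V$ is so concentrated near one edge of $R$ that the balancing position is forced off-center, one falls back to the geometric midpoint and recursively handles the concentrated side.

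The main obstacle is reconciling the aspect-ratio control (needed to apply Corollary~\ref{CoreOmr} with a uniform constant) with the lower bound $F(\Omega_n) \ge c_1$ (needed for the H\"older count). Tiles produced by off-center cuts may violate $F(\Omega_n) \ge c_1$; these are the ``small'' tiles of the paper's outline, and they have to be enumerated inductively along the bisection tree in the spirit of a Calder\'on--Zygmund decomposition, but exploiting the flexibility emphasized before Lemma~\ref{LemMain} to use non-dyadic, shape-adapted tiles rather than rigid dyadic squares. Separating the final tile family into ``large'' tiles with $\|V\|_{L^p(\Omega_n)} \ge \delta$ for a threshold $\delta \asymp \Lambda^{1-1/p}$--of which there are at most $\Lambda^p/\delta^p = O(\Lambda)$ by the disjoint-sum bound $\sum_n\|V\|_{L^p(\Omega_n)}^p \le \Lambda^p$--and ``small'' tiles with $\|V\|_{L^p(\Omega_n)} < \delta$ counted inductively through the bisection structure, one finally obtains $N \le 1 + C\Lambda$.
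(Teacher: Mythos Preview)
Your overall architecture is correct—partition $Q$ into tiles with $F(\Omega)\le c_0$, then bound the tile count—and the H\"older inequality $\sum_n F(\Omega_n)\le\Lambda$ is exactly what the paper uses to count the ``medium'' tiles (those with $F$ bounded away from zero). The subadditivity $F(R_1)+F(R_2)\le F(R)$ under bisection is also right. But you have not actually carried out the step you yourself flag as the main obstacle: counting the ``small'' tiles that arise when the balancing cut would violate the aspect-ratio bound and you revert to a midpoint cut. You gesture at ``inductive enumeration along the bisection tree'' without giving the induction, and the separate threshold $\delta\asymp\Lambda^{1-1/p}$ does not help—it bounds the number of tiles with $\|V\|_{L^p(\Omega_n)}$ \emph{above} $\delta$, whereas the uncontrolled tiles are precisely those with small $\|V\|_{L^p}$; a $\Lambda$-dependent threshold is in any case foreign to the problem.

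The paper closes this gap with a $4$-way dyadic split governed by a combinatorial invariant. Each large square is quartered; if at most two of the four children are small one keeps all four, and if exactly three are small one replaces the three (together with a slice of the fourth) by a single ``step'' tile—a square with a smaller corner square removed—whose size is adjusted so that it is of medium type; the case of four small children is shown to be impossible by the choice of the two thresholds. One then verifies that the quantity $2L+3M-S$ (counts of large, medium, small tiles) never decreases under refinement, so at termination $L=0$ forces $S\le 1+3M$ and hence $N\le 1+4M$, with $M\le C\Lambda$ by your H\"older bound. The step-shaped tiles and the monotone invariant are the ingredients missing from your outline.
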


\begin{remark}
\RM Combining Lemma \ref{LemMain} with Lemma \ref{LemVW} we obtain that if
an open set $\Omega \subset \mathbb{R}^{2}$ is bilipschitz equivalent to $Q$%
, then 
\begin{equation*}
\func{Neg}\left( V,\Omega \right) \leq 1+C\left\Vert V\right\Vert
_{L^{p}\left( \Omega \right) },
\end{equation*}%
where the constant $C$ depends on $p$ and on the Lipschitz constant.
\end{remark}

\begin{proof}
It suffices to construct a partition $\mathcal{P}$ of $Q$ into a family of $%
N $ disjoint subsets such that

\begin{enumerate}
\item $\func{Neg}\left( V,\Omega \right) =1$ for any $\Omega \in \mathcal{P}%
; $

\item $N\leq 1+C\left\Vert V\right\Vert _{L^{p}\left( Q\right) }.$
\end{enumerate}

Indeed, if such a partition exists then we obtain by Lemma \ref{LemSub} that%
\begin{equation}
\func{Neg}\left( {V,Q}\right) \leq \sum_{\Omega \in \mathcal{P}}\func{Neg}%
\left( V,\Omega \right) =N,  \label{NN}
\end{equation}%
and (\ref{Neg<}) follows from the above bound of $N$.

The elements of a partition -- tiles, will be of two shapes: any tile is
either a square of the side length $l\in (0,1]$ or a \emph{step}, that is, a
set of the form $\Omega =A\setminus B$ where $A$ is a square of the side
length $l$, and $B$ is a square of the side length $\leq l/2$ that is
attached to one of corners of $A$ (see Fig. \ref{pic3}).

\FRAME{ftbphFU}{5.0704in}{1.2592in}{0pt}{\Qcb{A square and a step of size $l$%
}}{\Qlb{pic3}}{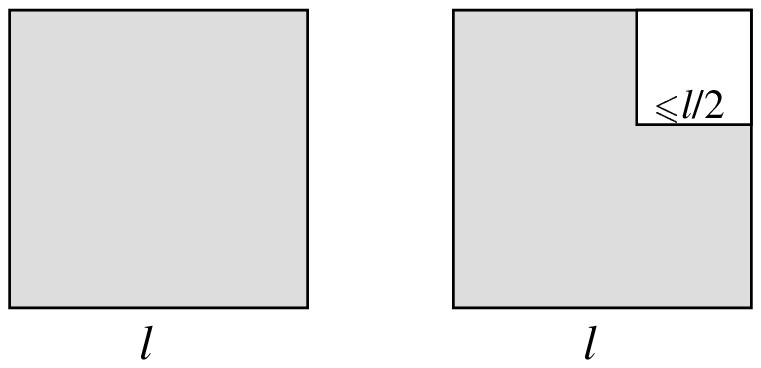}{\special{language "Scientific Word";type
"GRAPHIC";maintain-aspect-ratio TRUE;display "USEDEF";valid_file "F";width
5.0704in;height 1.2592in;depth 0pt;original-width 6.3737in;original-height
1.5238in;cropleft "0";croptop "1";cropright "1";cropbottom "0";filename
'pic3.eps';file-properties "XNPEU";}}

In the both cases we refer to $l$ as the size of $\Omega $. By Corollary \ref%
{CoreOmr}, the condition $\func{Neg}\left( V,\Omega \right) =1$ for a tile $%
\Omega $ will follow from 
\begin{equation}
\int_{\Omega }V^{p}dx\leq cl^{2-2p},  \label{cl}
\end{equation}%
with some constant $c>0$ depending only on $p$.

Apart from the shape, we will distinguish also the \emph{type} of a tile $%
\Omega \in \mathcal{P}$ of size $l$ as follows: we say that

\begin{itemize}
\item $\Omega $ is of a large type, if 
\begin{equation*}
\int_{\Omega }V^{p}dx>cl^{2-2p};
\end{equation*}

\item $\Omega $ is of a medium type if 
\begin{equation}
c^{\prime }l^{2-2p}<\int_{\Omega }V^{p}dx\leq cl^{2-2p};  \label{medium}
\end{equation}

\item $\Omega $ is of small type if%
\begin{equation}
\int_{\Omega }V^{p}dx\leq c^{\prime }l^{2-2p}.  \label{small}
\end{equation}%
Here $c$ is the constant from (\ref{cl}) and $c^{\prime }>0$ is another
constant that satisfies%
\begin{equation}
4c^{\prime }2^{2p-2}<c.  \label{c'}
\end{equation}
\end{itemize}

The construction of the partition $\mathcal{P}$ will be done by induction.
At each step $i\geq 1$ of induction we will have a partition $\mathcal{P}%
^{\left( i\right) }$ of $Q$ such that

\begin{enumerate}
\item each tile $\Omega \in \mathcal{P}^{\left( i\right) }$ is either a
square or a step;

\item If $\Omega \in \mathcal{P}^{\left( i\right) }$ is a step then $\Omega $
is of a medium type.
\end{enumerate}

At step $1$ we have just one set: $\mathcal{P}^{\left( 1\right) }=\left\{
Q\right\} $. At any step $i\geq 1$, partition $\mathcal{P}^{\left(
i+1\right) }$ is obtained from $\mathcal{P}^{\left( i\right) }$ as follows.
If $\Omega \in \mathcal{P}^{\left( i\right) }$ is small or medium then $%
\Omega $ becomes one of the elements of the partition $\mathcal{P}^{\left(
i+1\right) }.$ If $\Omega \in \mathcal{P}^{\left( i\right) }$ is large, then
it is a square, and it will be further partitioned into a few smaller tiles
that will become elements of $\mathcal{P}^{\left( i+1\right) }$. Denoting by 
$l$ the side length of the square $\Omega $, let us first split $\Omega $
into four equal squares $\Omega _{1},\Omega _{2},\Omega _{3},\Omega _{4}$ of
side length $l/2$ and consider the following cases (see Fig. \ref{pic4}).%
\FRAME{ftbphFU}{5.0695in}{1.1026in}{0pt}{\Qcb{Various possibilities of
partitioning of a square $\Omega $ (the shaded tiles are of medium or large
type, the hatched tile $\Omega _{1}$ can be of any type)}}{\Qlb{pic4}}{%
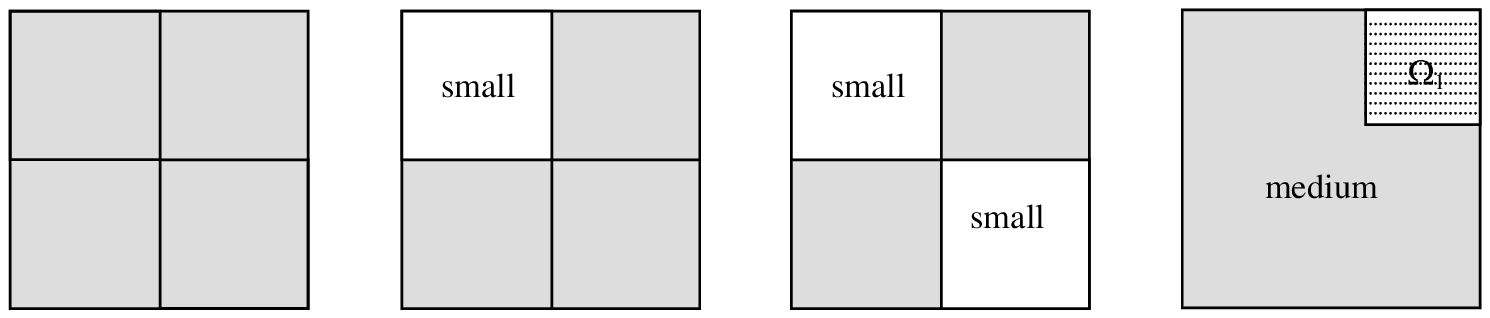}{\special{language "Scientific Word";type
"GRAPHIC";maintain-aspect-ratio TRUE;display "USEDEF";valid_file "F";width
5.0695in;height 1.1026in;depth 0pt;original-width 6.3737in;original-height
1.5394in;cropleft "0";croptop "1";cropright "1";cropbottom "0";filename
'pic4.eps';file-properties "XNPEU";}}

\begin{case}[Case 1.]
If among $\Omega _{1},...,\Omega _{4}$ the number of small type squares is
at most $2,$ then all the sets $\Omega _{1},...,\Omega _{4}$ become elements
of $\mathcal{P}^{\left( i+1\right) }$.
\end{case}

\begin{case}[Case 2.]
If among $\Omega _{1},...,\Omega _{4}$ there are exactly $3$ small type
squares, say, $\Omega _{2},\Omega _{3},\Omega _{4},$ then we have%
\begin{equation*}
\int_{\Omega \setminus \Omega _{1}}V^{p}dx=\int_{\Omega _{2}\cup \Omega
_{3}\cup \Omega _{4}}V^{p}dx\leq 3c^{\prime }\left( \frac{l}{2}\right)
^{2-2p}=3c^{\prime }2^{2p-2}l^{2-2p}<cl^{2-2p},
\end{equation*}%
where we have used (\ref{c'}). On the other hand, we have 
\begin{equation*}
\int_{\Omega }V^{p}dx>cl^{2-2p}.
\end{equation*}%
Therefore, by reducing the size of $\Omega _{1}$ (but keeping $\Omega _{1}$
attached to the corner of $\Omega $) one can achieve the equality 
\begin{equation*}
\int_{\Omega \setminus \Omega _{1}}V^{p}dx=cl^{2-2p}.
\end{equation*}%
Hence, we obtain a partition of $\Omega $ into two sets $\Omega _{1}$ and $%
\Omega \setminus \overline{\Omega _{1}}$, where the step $\Omega \setminus 
\overline{\Omega _{1}}$ is of medium type, while the square $\Omega _{1}$
can be of any type. The both sets $\Omega _{1}$ and $\Omega \setminus 
\overline{\Omega _{1}}$ become elements of $\mathcal{P}^{\left( i+1\right) }$%
.
\end{case}

\begin{case}[Case 3.]
Let us show that all $4$ squares $\Omega _{1},...,\Omega _{4}$ cannot be
small. Indeed, in this case we would have by (\ref{c'})%
\begin{equation*}
\int_{\Omega }V^{p}dx=\sum_{k=1}^{4}\int_{\Omega _{k}}V^{p}dx\leq 4c^{\prime
}\left( \frac{l}{2}\right) ^{2-2p}=\left( 4c^{\prime }2^{2p-2}\right)
l^{2-2p}<cl^{2-2p},
\end{equation*}%
which contradicts to the assumption that $\Omega $ is of large type.
\end{case}

As we see from the construction, at each step $i$ only large type squares
get partitioned further, and the size of the large type squares in $\mathcal{%
P}^{\left( i+1\right) }$ reduces at least by a factor $2.$ If the size of a
square is small enough then it is necessarily of small type, because the
right hand side of (\ref{small}) goes to $\infty $ as $l\rightarrow 0.$
Hence, the process stops after finitely many steps, and we obtain a
partition $\mathcal{P}$ where all the tiles are either of small or medium
types (see Fig. \ref{pic5}). In particular, we have $\func{Neg}\left(
V,\Omega \right) =1$ for any $\Omega \in \mathcal{P}.$\FRAME{ftbphFU}{%
4.4391in}{2.3402in}{0pt}{\Qcb{An example of a final partition $\mathcal{P}$.
The shaded tiles are of medium type, the white squares are of small type. }}{%
\Qlb{pic5}}{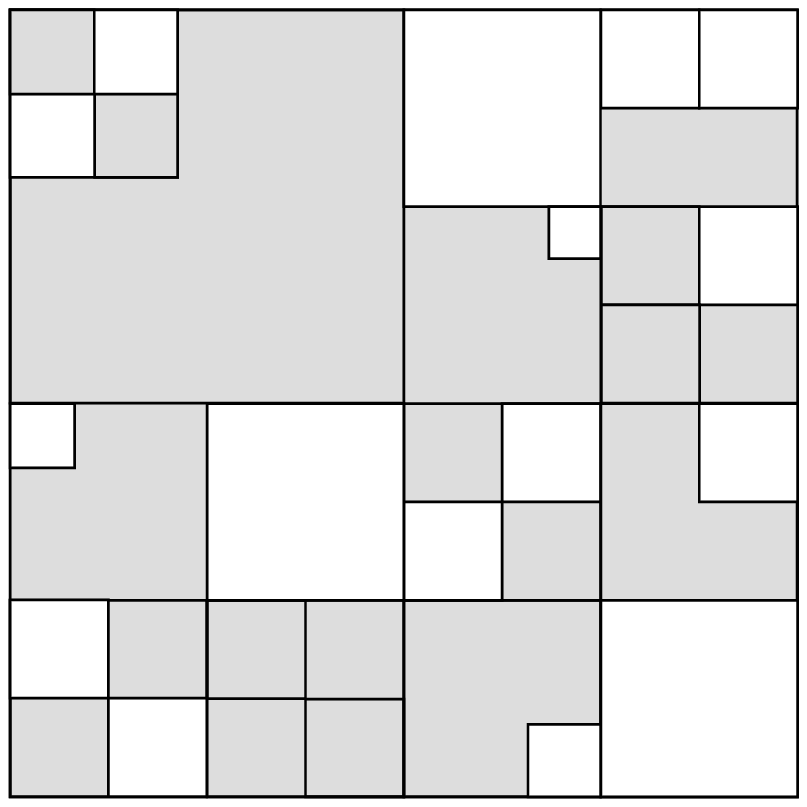}{\special{language "Scientific Word";type
"GRAPHIC";maintain-aspect-ratio TRUE;display "USEDEF";valid_file "F";width
4.4391in;height 2.3402in;depth 0pt;original-width 6.4411in;original-height
3.141in;cropleft "0";croptop "1";cropright "1";cropbottom "0";filename
'pic5.eps';file-properties "XNPEU";}}

Let $N$ be the number of tiles in $\mathcal{P}$. We need to show that 
\begin{equation}
N\leq 1+C\left\Vert V\right\Vert _{L^{p}\left( Q\right) }.  \label{N<1+V}
\end{equation}%
At each step of construction, denote by $L$ the number of large tiles, by $M$
the number of medium tiles, and by $S$ the number of small tiles. Let us
show that the quantity $2L+3M-S$ is non-decreasing during the construction.
Indeed, at each step we split one large square $\Omega $, so that by
removing this square, $L$ decreases by $1$. However, we add new tiles that
contribute to the quantity $2L+3M-S$ as follows.

\begin{enumerate}
\item If $\Omega $ is split into $s\leq 2$ small and $4-s$ medium/large
squares as in Case 1, then the value of $2L+3M-S$ has the increment at least%
\begin{equation*}
-2+2\left( 4-s\right) -s=6-3s\geq 0.
\end{equation*}

\item If $\Omega $ is split into $1$ square and $1$ step as in Case 2, then
one obtains at least $1$ medium tile and at most $1$ small tile, so that $%
2L+3M-S$ has the increment at least 
\begin{equation*}
-2+3-1=0.
\end{equation*}
\end{enumerate}

(Luckily, Case 3 cannot occur. In that case, we would have $4$ new small
squares so that $L$ and $M$ would not have increased, whereas $S$ would have
increased at least by $3$, so that no quantity of the type $C_{1}L+C_{2}M-S$
would have been monotone increasing).

Since for the partition $\mathcal{P}^{\left( 1\right) }$ we have $%
2L+3M-S\geq -1$, this inequality remains true at all steps of construction
and, in particular, it is satisfied for the final partition $\mathcal{P}$.
For the final partition we have $L=0$, whence it follows that $S\leq 1+3M$
and, hence,%
\begin{equation}
N=S+M\leq 1+4M.  \label{N4M}
\end{equation}

Let us estimate $M$. Let $\Omega _{1},...,\Omega _{M}$ be the medium type
tiles of $\mathcal{P}$ and let $l_{k}$ be the size of $\Omega _{k}$. Each $%
\Omega _{k}$ contains a square $\Omega _{k}^{\prime }\subset \Omega _{k}$ of
the size $l_{k}/2$, and all the squares $\left\{ \Omega _{k}^{\prime
}\right\} _{k=1}^{M}$ are disjoint, which implies that%
\begin{equation}
\sum_{k=1}^{M}l_{k}^{2}\leq 4.  \label{l4}
\end{equation}%
Using the H\"{o}lder inequality and (\ref{l4}), we obtain%
\begin{equation*}
M=\sum_{k=1}^{M}l_{k}^{\frac{2}{p^{\prime }}}l_{k}^{-\frac{2}{p^{\prime }}%
}\leq \left( \sum_{k=1}^{M}l_{k}^{2}\right) ^{1/p^{\prime }}\left(
\sum_{k=1}^{M}l_{k}^{-\frac{2p}{p^{\prime }}}\right) ^{1/p}\leq
4^{1/p^{\prime }}\left( \sum_{k=1}^{M}l_{k}^{2-2p}\right) ^{1/p}.
\end{equation*}%
Since by (\ref{medium}) $c^{\prime }l_{k}^{2-2p}<\int_{\Omega _{k}}V^{p}dx$,
it follows that 
\begin{equation*}
M\leq C\left( \sum_{k=1}^{M}\int_{\Omega _{k}}V^{p}dx\right) ^{1/p}\leq
C\left( \int_{Q}V^{p}dx\right) ^{1/p}.
\end{equation*}%
Combining this with $N\leq 1+4M$, we obtain $N\leq 1+C\left\Vert
V\right\Vert _{L^{p}\left( Q\right) }$, thus finishing the proof.
\end{proof}

\section{Negative eigenvalues and Green operator}

\setcounter{equation}{0}\label{SecR2one}

\subsection{Green operator in $\mathbb{R}^{2}$}

We start with the following statement.

\begin{lemma}
\label{LemV0}There exists non-negative non-zero function $V_{0}\in
C_{0}^{\infty }\left( \mathbb{R}^{2}\right) $ such that $\func{Neg}\left(
V_{0}\right) =1.$
\end{lemma}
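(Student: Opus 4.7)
The plan is to combine two tools already available in the paper: the extension/doubling argument (Lemma \ref{Lem2D}) and the small-$L^p$ threshold for $\func{Neg}=1$ on a disk (Corollary \ref{CoreOmr}). The basic idea is that any sufficiently small bump function should yield $\func{Neg}=1$, and ``small'' here means small $L^p$-norm on a compactly-containing disk.

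Concretely, I would fix any non-negative non-zero $\varphi\in C_0^\infty(\mathbb{R}^2)$ with $\func{supp}\varphi\subset D_r$ for some $r>0$, and consider $V_0=\alpha\varphi$ for a small parameter $\alpha>0$ to be chosen. Since $V_0$ is trivially extended from $D_r$ to $\mathbb{R}^2$ (it already vanishes outside $D_r$), Lemma \ref{Lem2D} applies and gives
\begin{equation*}
\func{Neg}(V_0,\mathbb{R}^2)\leq \func{Neg}(2V_0,D_r).
\end{equation*}
Next, Corollary \ref{CoreOmr} provides a constant $c=c(p)>0$ such that if $\|2V_0\|_{L^p(D_r)}\leq c\,r^{2/p-2}$, then $\func{Neg}(2V_0,D_r)=1$. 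Since $\|2V_0\|_{L^p(D_r)}=2\alpha\|\varphi\|_{L^p(D_r)}$, choosing
\begin{equation*}
\alpha \leq \frac{c\,r^{2/p-2}}{2\|\varphi\|_{L^p(D_r)}}
\end{equation*}
secures the required smallness. Combining the two estimates yields $\func{Neg}(V_0,\mathbb{R}^2)\leq 1$.

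Finally, the reverse inequality $\func{Neg}(V_0,\mathbb{R}^2)\geq 1$ was already established in the general discussion following the definition in Section \ref{SecGen} (any non-trivial potential has at least one non-positive eigenvalue, either by testing with $\mathbf{1}$ when $V_0\in L^1$, or with the truncated tent functions $f_n$ otherwise). Therefore $\func{Neg}(V_0)=1$, and $V_0=\alpha\varphi$ is the desired function. There is no real obstacle here; the only thing to be careful about is to verify that Corollary \ref{CoreOmr} is applied with the potential $2V_0$ rather than $V_0$, matching the factor appearing in Lemma \ref{Lem2D}.
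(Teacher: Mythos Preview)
Your proof is correct and follows essentially the same approach as the paper: use Lemma~\ref{Lem2D} to reduce to a disk, then invoke the small-$L^p$ criterion (the paper cites Lemma~\ref{LemNeg=1} directly for the unit disk, while you cite its immediate consequence Corollary~\ref{CoreOmr} for $D_r$, which is the same thing). The explicit scaling by $\alpha$ and the reminder about $\func{Neg}\geq 1$ are fine additions.
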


\begin{proof}
Choose $V_{0}$ to be supported in the unit disk $D$ and such that $%
\left\Vert 2V_{0}\right\Vert _{L^{p}\left( D\right) }$ is small enough as in
Lemma \ref{LemNeg=1}, so that $\func{Neg}\left( 2V_{0},D\right) =1.$ By
Lemma \ref{Lem2D} we have $\func{Neg}\left( V_{0},\mathbb{R}^{2}\right) \leq 
\func{Neg}\left( 2V_{0},D\right) ,$ whence the claim follows.
\end{proof}

From now on let us fix a potential $V_{0}$ as in Lemma \ref{LemV0}. We can
always assume that $V_{0}$ is spherically symmetric. Consider the quadratic
form $\mathcal{E}_{0}$%
\begin{equation*}
\mathcal{E}_{0}\left( u\right) :=\int_{\mathbb{R}^{2}}\left\vert \nabla
u\right\vert ^{2}dx+\int_{\mathbb{R}^{2}}V_{0}u^{2}dx=\mathcal{E}%
_{-V_{0}}\left( u\right) ,
\end{equation*}%
defined on the space 
\begin{equation*}
\mathcal{F}_{0}=\left\{ u\in L_{loc}^{2}\left( \mathbb{R}^{2}\right) :\int_{%
\mathbb{R}^{2}}\left\vert \nabla u\right\vert ^{2}dx<\infty \right\} =%
\mathcal{F}_{V_{0}}.
\end{equation*}%
Since $V_{0}$ is bounded and has compact support, the condition $\int_{%
\mathbb{R}^{2}}V_{0}u^{2}dx<\infty $ is satisfied for any $u\in L_{loc}^{2}.$
Note also that $\mathcal{F}_{V}\subset \mathcal{F}_{0}$ for any potential $V$%
.

\begin{lemma}
\label{LemVV0}If, for all $u\in \mathcal{F}_{0}$,%
\begin{equation}
\mathcal{E}_{0}\left( u\right) \geq 2\int_{\mathbb{R}^{2}}Vu^{2}dx,
\label{2V0}
\end{equation}%
then $\func{Neg}\left( V\right) =1.$
\end{lemma}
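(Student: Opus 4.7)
The plan is to prove $\func{Neg}(V) = 1$ by combining the already-noted lower bound $\func{Neg}(V) \geq 1$ with an upper bound $\func{Neg}(V) \leq 1$ obtained by contradiction, using $V_0$ as a comparison potential.

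First I would observe that, since $V_0 \in C_0^\infty(\mathbb{R}^2)$ is bounded with compact support, the condition $\int V_0 u^2\,dx < \infty$ is automatic for every $u \in L^2_{\mathrm{loc}}(\mathbb{R}^2)$. Consequently $\mathcal{F}_V \subset \mathcal{F}_0 = \mathcal{F}_{V_0}$ for any potential $V$, and the hypothesis $\mathcal{E}_0(u) \geq 2\int Vu^2\,dx$ may be applied to any element of $\mathcal{F}_V$.

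Next, suppose for contradiction that $\func{Neg}(V) \geq 2$. Then by definition there exists a two-dimensional subspace $\mathcal{V} \prec \mathcal{F}_V$ on which $\mathcal{E}_V \leq 0$, i.e.\ $\int |\nabla u|^2 \,dx \leq \int V u^2\,dx$ for every $u \in \mathcal{V}$. I would combine this inequality with the standing hypothesis: for $u \in \mathcal{V}$,
\begin{equation*}
\int |\nabla u|^2\,dx + \int V_0 u^2\,dx \;=\; \mathcal{E}_0(u) \;\geq\; 2\int V u^2\,dx \;\geq\; 2\int |\nabla u|^2\,dx,
\end{equation*}
so that $\int V_0 u^2\,dx \geq \int |\nabla u|^2\,dx$, i.e.\ $\mathcal{E}_{V_0}(u) \leq 0$ on all of $\mathcal{V}$. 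Since $\mathcal{V} \subset \mathcal{F}_V \subset \mathcal{F}_{V_0}$, this exhibits a two-dimensional subspace witnessing $\func{Neg}(V_0) \geq 2$, contradicting the choice of $V_0$ from Lemma \ref{LemV0}.

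Hence $\func{Neg}(V) \leq 1$. Combined with the general lower bound $\func{Neg}(V) \geq 1$ established earlier in Section~\ref{SecGen} (using the constant test function $1 \in \mathcal{F}_V$, or an exhausting family of cutoff functions when $V \notin L^1$), we conclude $\func{Neg}(V) = 1$. There is no essential obstacle here: the whole argument is a short Morse-index comparison, and the only point requiring a moment's care is checking that the two-dimensional subspace produced from $\mathcal{V}$ genuinely lies inside $\mathcal{F}_{V_0}$, which follows immediately from the boundedness and compact support of $V_0$.
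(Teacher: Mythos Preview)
Your proof is correct and follows essentially the same approach as the paper: both derive, from $\mathcal{E}_V(u)\le 0$ together with the hypothesis, the implication $\mathcal{E}_{V_0}(u)\le 0$, and then use $\func{Neg}(V_0)=1$ to bound $\func{Neg}(V)$. The only cosmetic difference is that the paper invokes Lemma~\ref{LemL}$(a)$ (with $\mathcal{L}$ the inclusion $\mathcal{F}_V\hookrightarrow\mathcal{F}_{V_0}$) to conclude $\func{Neg}(V)\le\func{Neg}(V_0)$ directly, whereas you unfold that lemma by hand as a contradiction argument with a two-dimensional subspace.
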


\begin{proof}
If $\mathcal{E}_{V}\left( u\right) \leq 0$ that is, if%
\begin{equation*}
\int_{\mathbb{R}^{2}}\left\vert \nabla u\right\vert ^{2}dx\leq \int_{\mathbb{%
R}^{2}}Vu^{2}dx,
\end{equation*}%
then, substituting this into the right hand side of (\ref{2V0}), we obtain 
\begin{equation*}
\int_{\mathbb{R}^{2}}\left\vert \nabla u\right\vert ^{2}dx+\int_{\mathbb{R}%
^{2}}V_{0}u^{2}dx\geq 2\int_{\mathbb{R}^{2}}\left\vert \nabla u\right\vert
^{2}dx,
\end{equation*}%
whence%
\begin{equation*}
\int_{\mathbb{R}^{2}}\left\vert \nabla u\right\vert ^{2}dx\leq \int_{\mathbb{%
R}^{2}}V_{0}u^{2}dx,
\end{equation*}%
that is, $\mathcal{E}_{V_{0}}\left( u\right) \leq 0$. By Lemma \ref{LemL}
this implies $\func{Neg}\left( V\right) \leq \func{Neg}\left( V_{0}\right) $%
, whence the claim follows.
\end{proof}

Lemma \ref{LemVV0} provides the following method of proving that $\func{Neg}%
\left( V\right) =1$: it suffices to prove the inequality (\ref{2V0}) for all 
$u\in \mathcal{F}_{0}.$ For the latter, we will use the Green function of
the operator 
\begin{equation*}
H_{0}=-\Delta +V_{0}.
\end{equation*}%
It was shown in \cite[Example 10.14]{GrigW} that the operator $H_{0}$ has a
symmetric positive Green function $g\left( x,y\right) $ that satisfies the
following estimate 
\begin{equation}
g\left( x,y\right) \simeq \ln \langle y\rangle +\frac{\ln \langle y\rangle }{%
\ln \langle x\rangle }\ln _{+}\frac{1}{\left\vert x-y\right\vert }\ \ \ 
\text{if\ \ }\left\vert y\right\vert \leq \left\vert x\right\vert ,
\label{gH0}
\end{equation}%
and a symmetric estimate if $\left\vert y\right\vert \geq \left\vert
x\right\vert $, where we use the notation 
\begin{equation*}
\left\langle x\right\rangle =e+\left\vert x\right\vert .
\end{equation*}%
It follows from (\ref{gH0}) that, for all $x,y\in \mathbb{R}^{2}$,%
\begin{equation}
g\left( x,y\right) \simeq \ln \left\langle x\right\rangle \wedge \ln
\left\langle y\right\rangle +\ln _{+}\frac{1}{\left\vert x-y\right\vert },
\label{g<}
\end{equation}%
where $a\wedge b:=\min \left( a,b\right) .$ Here we have used the fact that $%
\left\langle x\right\rangle \simeq \left\langle y\right\rangle $ provided $%
\left\vert x-y\right\vert <1$; note that the latter is equivalent to $\ln
_{+}\frac{1}{\left\vert x-y\right\vert }>0.$

For comparison, let us recall that the operator $-\Delta $ in $\mathbb{R}%
^{2} $ has no positive Green function, so that adding a small perturbation $%
V_{0}$ changes this property.

Fix a potential $V$ on $\mathbb{R}^{2}$, consider a measure $\nu $ on $%
\mathbb{R}^{2}$ given by%
\begin{equation*}
d\nu =V\left( x\right) dx,
\end{equation*}%
and the integral operator $G_{V}$ in $L^{2}\left( \nu \right) =L^{2}\left( 
\mathbb{R}^{2},\nu \right) $ that acts by the rule%
\begin{equation*}
G_{V}f\left( x\right) =\int_{\mathbb{R}^{2}}g\left( x,y\right) f\left(
y\right) d\nu \left( y\right) .
\end{equation*}%
Denote by $\left\Vert G_{V}\right\Vert $ the norm of the operator $G$ from $%
L^{2}\left( \nu \right) $ to $L^{2}\left( \nu \right) $ (if $G_{V}$ does not
map $L^{2}\left( \nu \right) $ into itself then set $\left\Vert
G_{V}\right\Vert =\infty $).

\begin{lemma}
\label{LemGf}Assume that 
\begin{equation}
\frac{1}{V}\in L_{loc}^{1}.  \label{1/V}
\end{equation}%
Then following inequality holds for all $u\in \mathcal{F}_{0}$:%
\begin{equation}
\mathcal{E}_{0}\left( u\right) \geq \frac{1}{\left\Vert G_{V}\right\Vert }%
\int_{\mathbb{R}^{2}}Vu^{2}dx.  \label{E0>}
\end{equation}
\end{lemma}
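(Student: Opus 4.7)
The plan is to interpret $G_{V}$ as essentially $H_{0}^{-1}$ followed by multiplication against the measure $Vdx$, so that $w:=G_{V}u$ is a weak solution of $H_{0}w=Vu$. Since $\mathcal{E}_{0}$ is a genuine positive inner product on $\mathcal{F}_{0}$ (because $V_{0}\geq 0$), this lets me run a Cauchy--Schwarz bootstrap that converts the operator-norm bound $\|G_{V}\|$ on $L^{2}(\nu)$ into the form-boundedness statement $\int Vu^{2}dx\leq \|G_{V}\|\,\mathcal{E}_{0}(u)$, which is just (\ref{E0>}) rewritten.

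\textbf{Key steps.} I would first reduce to the case $u\in\mathcal{F}_{0}\cap L^{2}(\nu)$; otherwise both sides of (\ref{E0>}) are trivially compatible after a truncation argument (see below). The first substantive step is the Green identity
\[
\mathcal{E}_{0}(w,v)=\int_{\mathbb{R}^{2}}Vuv\,dx\qquad\text{for every }v\in\mathcal{F}_{0},
\]
where $w=G_{V}u$. This follows from the fact that $g(\cdot,y)$ is the kernel of $H_{0}^{-1}$: one checks it first for $v\in C_{0}^{\infty}$ using $(-\Delta+V_{0})g(\cdot,y)=\delta_{y}$, and simultaneously concludes that $w\in\mathcal{F}_{0}$, using the two-sided estimate (\ref{g<}) to control the Dirichlet integral of $w$; density of $C_{0}^{\infty}$ in $\mathcal{F}_{0}$ with respect to $\mathcal{E}_{0}$ extends the identity to general $v$. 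Plugging $v=u$ gives
\[
\mathcal{E}_{0}(w,u)=\int Vu^{2}dx,
\]
while plugging $v=w$ and invoking the operator norm on $L^{2}(\nu)$ gives
\[
\mathcal{E}_{0}(w)=\int Vu\,w\,dx=\langle u,G_{V}u\rangle_{L^{2}(\nu)}\leq \|G_{V}\|\int Vu^{2}dx.
\]
Since $\mathcal{E}_{0}$ is a positive bilinear form, Cauchy--Schwarz yields
\[
\int Vu^{2}dx=\mathcal{E}_{0}(w,u)\leq\sqrt{\mathcal{E}_{0}(u)\mathcal{E}_{0}(w)}\leq\sqrt{\|G_{V}\|\,\mathcal{E}_{0}(u)\int Vu^{2}dx}.
\]
Squaring and dividing by $\int Vu^{2}dx$ (which I may assume positive and finite) gives (\ref{E0>}). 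The case $\int Vu^{2}dx=\infty$ is handled by applying the estimate to the truncations $u_{n}=((u\wedge n)\vee(-n))\mathbf{1}_{\{|x|<n\}}$, which lie in $\mathcal{F}_{0}\cap L^{2}(\nu)$, and then letting $n\to\infty$ using monotone convergence on the right and $\mathcal{E}_{0}(u_{n})\leq\mathcal{E}_{0}(u)$ on the left; the hypothesis $1/V\in L^{1}_{loc}$ enters here to guarantee, via Cauchy--Schwarz $\int_{K}|u|dx\leq(\int_{K}Vu^{2}dx)^{1/2}(\int_{K}\frac{1}{V}dx)^{1/2}$, that membership in $L^{2}(\nu)$ locally implies $L^{1}_{loc}$-control needed for the Green identity.

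\textbf{Main obstacle.} The delicate point is the Green identity $\mathcal{E}_{0}(G_{V}u,v)=\int Vuv\,dx$ together with $G_{V}u\in\mathcal{F}_{0}$. The integrand $g(x,y)u(y)V(y)$ need not be absolutely integrable a priori, and the logarithmic growth of $g(x,y)$ in the first term of (\ref{g<}) means one must carefully control the Dirichlet integral of $w$. I expect to handle this by first working with $u$ of compact support and bounded $V$, where everything is classical, then using a double approximation (truncating $u$ in space and truncating $V$ from above) combined with $1/V\in L^{1}_{loc}$ and the sharp bounds on $g$ to pass to the limit. Once the identity is in hand, the Cauchy--Schwarz bootstrap is short and essentially algebraic.
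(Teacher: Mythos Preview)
Your route is correct in outline but runs \emph{dual} to the paper's. The paper does not form $w=G_{V}u$; instead, for $u\in C_{0}^{\infty}$ it sets $f=\frac{1}{V}H_{0}u$, so that $u=G_{V}f$, and then computes $\mathcal{E}_{0}(u)=(f,G_{V}f)_{L^{2}(\nu)}$ and $\int Vu^{2}dx=(G_{V}f,G_{V}f)_{L^{2}(\nu)}$. The inequality is then the purely operator-theoretic fact $(G_{V}f,G_{V}f)\leq\|G_{V}\|(f,G_{V}f)$, obtained from Cauchy--Schwarz for the semi-inner product $(f,h)\mapsto (G_{V}f,h)_{L^{2}(\nu)}$ with $h=G_{V}f$. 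The hypothesis $1/V\in L^{1}_{loc}$ is used exactly once, to show $f=\frac{1}{V}H_{0}u\in L^{2}(\nu)$ for $u\in C_{0}^{\infty}$. This direction sidesteps your ``main obstacle'' entirely: there is no need to prove $G_{V}u\in\mathcal{F}_{0}$ or a Green identity for rough data, because $f$ has compact support and everything is local. The price is a three-stage approximation afterwards ($C_{0}^{\infty}\to$ compact support $\to$ bounded $\to$ general $u\in\mathcal{F}_{0}$). Your version trades that for the Green-identity approximation you sketch, which is workable but genuinely more delicate: for instance, even the finiteness of $w(x)=\int g(x,y)V(y)u(y)\,dy$ is not automatic from $u\in L^{2}(\nu)$, since the long-range part $\ln\langle x\rangle\wedge\ln\langle y\rangle$ of $g$ requires a weighted $L^{1}$ bound on $V|u|$.

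One concrete fix: the truncation $u_{n}=((u\wedge n)\vee(-n))\mathbf{1}_{\{|x|<n\}}$ does \emph{not} satisfy $\mathcal{E}_{0}(u_{n})\leq\mathcal{E}_{0}(u)$, because the hard spatial cutoff $\mathbf{1}_{\{|x|<n\}}$ creates a jump and throws $u_{n}$ out of $W^{1,2}_{loc}$. The paper separates the two truncations: first reduce a bounded $u$ to compact support by multiplying by a smooth cutoff $\varphi_{n}$ with $\int|\nabla\varphi_{n}|^{2}\to 0$ (this is where parabolicity of $\mathbb{R}^{2}$ enters), and only then pass from general to bounded $u$ via $(u\wedge n)\vee(-n)$ alone.
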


\begin{proof}
If $\left\Vert G_{V}\right\Vert =\infty $ then (\ref{E0>}) is trivially
satisfied, so assume that $\left\Vert G_{V}\right\Vert <\infty .$ Consider
first the case when $u\in C_{0}^{\infty }\left( \mathbb{R}^{2}\right) $. Set 
$f=\frac{1}{V}H_{0}u$ so that $H_{0}u=fV.$ Then function $u$ can be
recovered from $f$ using the Green operator $G=G_{V}$ as follows:%
\begin{equation*}
u\left( x\right) =\int_{\mathbb{R}^{2}}g\left( x,y\right) \left( fV\right)
\left( y\right) dy=Gf\left( x\right) .
\end{equation*}%
Observe that $f\in L^{2}\left( \nu \right) $ because by (\ref{1/V}) 
\begin{equation*}
\int_{\mathbb{R}^{2}}f^{2}d\nu =\int_{\mathbb{R}^{2}}\frac{\left(
H_{0}u\right) ^{2}}{V}dx\leq \sup \left\vert H_{0}u\right\vert ^{2}\int_{%
\limfunc{supp}u}\frac{1}{V}dx<\infty .
\end{equation*}%
It follows that%
\begin{equation}
\mathcal{E}_{0}\left( u\right) =\left( H_{0}u,u\right) _{L^{2}\left(
dx\right) }=\left( fV,Gf\right) _{L^{2}\left( dx\right) }=\left( f,Gf\right)
_{L^{2}\left( \nu \right) }  \label{5}
\end{equation}%
and%
\begin{equation*}
\int_{\mathbb{R}^{2}}Vu^{2}dx=\left( u,u\right) _{L^{2}\left( \nu \right)
}=\left( Gf,Gf\right) _{L^{2}\left( \nu \right) }.
\end{equation*}%
Inequality (\ref{E0>}) will follows if we prove that, for all $f\in
L^{2}(\nu )$, 
\begin{equation}
\left( f,Gf\right) \geq \frac{1}{\left\Vert G\right\Vert }\left(
Gf,Gf\right) ,  \label{fG}
\end{equation}%
where the both inner products are in $L^{2}\left( \nu \right) .$

Recall that $G$ is a bounded symmetric (hence, self-adjoint) operator in $%
L^{2}\left( \nu \right) $. Observe that $G$ is non-negative definite.
Indeed, if $f\in C_{0}^{\infty }\left( \mathbb{R}^{2}\right) $ then, setting 
$u=Gf,$ we obtain the identities (\ref{5}) so that%
\begin{equation*}
\left( f,Gf\right) =\mathcal{E}_{0}\left( u\right) \geq 0.
\end{equation*}%
Then $\left( f,Gf\right) \geq 0$ follows from the fact that $C_{0}^{\infty
}\left( \mathbb{R}^{2}\right) $ is dense in $L^{2}\left( \nu \right) $.

Now, let us prove (\ref{fG}). For non-negative definite self-adjoint
operators the following inequality holds, for all $f,h\in L^{2}(\nu )$:%
\begin{equation*}
\left( Gf,h\right) ^{2}\leq \left( Gf,f\right) \left( Gh,h\right) .
\end{equation*}%
Setting $h=Gf,$ we obtain%
\begin{equation*}
\left( Gf,Gf\right) ^{2}\leq \left( Gf,f\right) \left\Vert G\right\Vert
\left\Vert h\right\Vert ^{2}=\left\Vert G\right\Vert \left( Gf,f\right)
\left( Gf,Gf\right) .
\end{equation*}%
Dividing by $\left( Gf,Gf\right) $, we obtain (\ref{fG}).

Hence, we have proved (\ref{E0>}) for $u\in C_{0}^{\infty }\left( \mathbb{R}%
^{2}\right) .$ Let us extend this inequality to all $u\in \mathcal{F}_{0}$.
Assume first that $u\in \mathcal{F}_{0}$ has a compact support. Then it
follows that $u\in W^{1,2}\left( \mathbb{R}^{2}\right) .$ Approximating $u$
in $W^{1,2}$ by a sequence $\left\{ u_{n}\right\} \subset C_{0}^{\infty
}\left( \mathbb{R}^{2}\right) $, applying (\ref{E0>}) for each $u_{n}$ and
passing to the limit using Fatou's lemma, we obtain (\ref{E0>}) for $u$.

Let us now prove (\ref{E0>}) for the case when the function $u\in \mathcal{F}%
_{0}$ is essentially bounded. There is a sequence of non-negative Lipschitz
functions $\varphi _{n}$ on $\mathbb{R}^{2}$ with compact supports such that 
$\varphi _{n}\uparrow 1$ as $n\rightarrow \infty $ and 
\begin{equation}
\int_{\mathbb{R}^{2}}\left\vert \nabla \varphi _{n}\right\vert
^{2}dx\rightarrow 0.  \label{fin0}
\end{equation}%
For example, one can take $\varphi _{n}$ as in (\ref{fiab}) with $a_{n}=n$
and $b_{n}=n^{2}$, that is,%
\begin{equation}
\varphi _{n}\left( x\right) =\min \left( 1,\frac{1}{\ln n}\ln _{+}\frac{n^{2}%
}{\left\vert x\right\vert }\right) .  \label{finR2}
\end{equation}%
Clearly, $\varphi _{n}\in \mathcal{F}_{0}$. By (\ref{Efi}) we have 
\begin{equation*}
\int_{\mathbb{R}^{2}}\left\vert \nabla \varphi _{n}\right\vert ^{2}dx=\frac{%
2\pi }{\ln n}\rightarrow 0\ \text{as }n\rightarrow \infty .
\end{equation*}%
Since (\ref{E0>}) holds for the functions $u_{n}=u\varphi _{n}$ with compact
support, it suffices to show that passing to the limit as $n\rightarrow
\infty $, we obtain (\ref{E0>}) for the function $u$. The terms $\int
V_{0}u_{n}^{2}dx$ and $\int Vu_{n}^{2}dx$ are obviously survive under the
monotone limit. We are left to verify that 
\begin{equation}
\int_{\mathbb{R}^{2}}\left\vert \nabla u_{n}\right\vert ^{2}dx\rightarrow
\int_{\mathbb{R}^{2}}\left\vert \nabla u\right\vert ^{2}dx.  \label{fin}
\end{equation}%
We have%
\begin{eqnarray}
&&\int_{\mathbb{R}^{2}}\left\vert \nabla \left( u\varphi _{n}\right)
\right\vert ^{2}dx  \notag \\
&=&\int_{\mathbb{R}^{2}}\left\vert \nabla u\right\vert ^{2}\varphi
_{n}^{2}dx+2\int_{\mathbb{R}^{2}}\langle \nabla u,\nabla \varphi _{n}\rangle
u\varphi _{n}dx+\int_{\mathbb{R}^{2}}u^{2}\left\vert \nabla \varphi
_{n}\right\vert ^{2}dx.  \label{1=3}
\end{eqnarray}%
The first term in the right hand side of (\ref{1=3}) converges to $\int_{%
\mathbb{R}^{2}}\left\vert \nabla u\right\vert ^{2}dx$. For the third term we
have by (\ref{fin0}) 
\begin{equation*}
\int_{\mathbb{R}^{2}}u^{2}\left\vert \nabla \varphi _{n}\right\vert
^{2}dx\leq \left\Vert u\right\Vert _{L^{\infty }}^{2}\int_{\mathbb{R}%
^{2}}\left\vert \nabla \varphi _{n}\right\vert ^{2}dx\rightarrow 0\ \text{as 
}n\rightarrow \infty .
\end{equation*}%
Similarly, the middle term converges to $0$ as $n\rightarrow \infty $ by%
\begin{equation*}
\left\vert \int_{\mathbb{R}^{2}}\langle \nabla u,\nabla \varphi _{n}\rangle
u\varphi _{n}dx\right\vert \leq \left( \int_{\mathbb{R}^{2}}\left\vert
\nabla u\right\vert ^{2}\varphi _{n}^{2}dx\right) ^{1/2}\left( \int_{\mathbb{%
R}^{2}}u^{2}\left\vert \nabla \varphi _{n}\right\vert ^{2}dx\right)
\rightarrow 0,
\end{equation*}%
which proves (\ref{fin}).

Finally, for a general function $u\in \mathcal{F}_{0}$, consider an
approximating sequence 
\begin{equation*}
u_{n}=\max \left( \min \left( u,n\right) ,-n\right) .
\end{equation*}%
The function $u_{n}$ is bounded so that (\ref{E0>}) holds for $u_{n}$.
Letting $n\rightarrow \infty $, we obtain (\ref{E0>}) for the function $u$.
\end{proof}

\begin{corollary}
\label{CorR2}Under the hypothesis \emph{(\ref{1/V})}, 
\begin{equation}
\left\Vert G_{V}\right\Vert \leq \frac{1}{2}\Rightarrow \func{Neg}\left( {V}%
\right) =1.  \label{bV1}
\end{equation}
\end{corollary}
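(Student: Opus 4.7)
The plan is to simply chain together Lemma \ref{LemGf} and Lemma \ref{LemVV0}, which have been set up precisely for this purpose. Assume the hypothesis $1/V \in L^{1}_{loc}$ and $\|G_{V}\| \leq 1/2$. First, I would apply Lemma \ref{LemGf} to get the inequality
\begin{equation*}
\mathcal{E}_{0}(u) \geq \frac{1}{\|G_{V}\|}\int_{\mathbb{R}^{2}} V u^{2}\, dx \geq 2 \int_{\mathbb{R}^{2}} V u^{2}\, dx
\end{equation*}
valid for every $u \in \mathcal{F}_{0}$, where the second inequality uses $\|G_{V}\| \leq 1/2$.

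Next, this is exactly the hypothesis of Lemma \ref{LemVV0}, so I invoke that lemma to conclude $\func{Neg}(V) \leq 1$. Combined with the general fact, established earlier in Section \ref{SecGen}, that $\func{Neg}(V,\Omega) \geq 1$ for any potential on any open set, we obtain $\func{Neg}(V) = 1$.

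There is no real obstacle here: the entire analytic work (the Green function estimate, the reduction via the Cauchy-Schwarz inequality for non-negative self-adjoint operators, and the comparison with the reference potential $V_{0}$) has already been done in Lemmas \ref{LemV0}, \ref{LemVV0}, and \ref{LemGf}. The corollary is a one-line combination, and the only thing to be careful about is that the hypothesis $1/V \in L^{1}_{loc}$ is correctly carried over so that Lemma \ref{LemGf} applies.
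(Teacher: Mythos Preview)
Your proof is correct and matches the paper's argument exactly: combine Lemma \ref{LemGf} (which requires the hypothesis $1/V\in L^{1}_{loc}$) with Lemma \ref{LemVV0}. The only redundancy is that Lemma \ref{LemVV0} already concludes $\func{Neg}(V)=1$ directly, so the separate appeal to $\func{Neg}(V)\geq 1$ is unnecessary.
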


\begin{proof}
Indeed, (\ref{bV1}) implies (\ref{2V0}), whence $\func{Neg}\left( V\right)
=1 $ holds by Lemma \ref{LemVV0}.
\end{proof}

\subsection{Green operator in a strip}

Consider a strip 
\begin{equation*}
S=\left\{ \left( x_{1},x_{2}\right) \in \mathbb{R}^{2}:x_{1}\in \mathbb{R},\
0<x_{2}<\pi \right\}
\end{equation*}%
and a potential $V$ on $S$. The analytic function $\Psi \left( z\right)
=e^{z}$ provides a biholomorphic mapping from $S$ onto the upper half-plane $%
H_{+}$. Set $\Phi =\Psi ^{-1}$ so that $\Phi \left( z\right) =\ln z$.
Consider the function 
\begin{equation}
\gamma \left( z,w\right) =g\left( \Psi \left( z\right) ,\Psi \left( w\right)
\right) ,  \label{gadef}
\end{equation}%
where $z,w\in S$ and $g\left( x,y\right) $ is the Green function from
Section \ref{SecR2one}. Consider also the corresponding integral operator%
\begin{equation}
\Gamma _{V}f\left( z\right) =\int_{S}\gamma \left( z,\cdot \right) f\left(
\cdot \right) d\nu ,  \label{Gadef}
\end{equation}%
where measure $\nu $ is defined as above by $d\nu =V\left( x\right) dx$.
Denote by $\left\Vert \Gamma _{V}\right\Vert $ the norm of $\Gamma _{V}$ in $%
L^{2}\left( S,\nu \right) $.

\begin{lemma}
\label{Lem1/8}Let 
\begin{equation}
\frac{1}{V}\in L_{loc}^{1}\left( S\right) .  \label{1/VS}
\end{equation}%
Then 
\begin{equation*}
\left\Vert \Gamma _{V}\right\Vert \leq \frac{1}{8}\Rightarrow \func{Neg}%
\left( V,S\right) =1.
\end{equation*}
\end{lemma}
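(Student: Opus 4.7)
The plan is to reduce the statement to Corollary \ref{CorR2} by pushing the problem through the conformal map $\Psi(z)=e^{z}$ from $S$ onto $H_{+}$ and then extending the pushed-forward potential axially across $\partial H_{+}$.

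First, I would push $V$ forward from $S$ to $H_{+}$ via $\Psi$, obtaining the potential $\widetilde{V}(y)=|y|^{-2}V(\ln y)$. By Remark \ref{Remcon}, $\func{Neg}(V,S)=\func{Neg}(\widetilde{V},H_{+})$. A direct change of variables $w=\Phi(\eta)$ in the definition of $\Gamma_{V}$ identifies $\Gamma_{V}$ (acting on $L^{2}(S,V\,dz)$) with the operator $A$ acting on $L^{2}(H_{+},\widetilde{V}\,dy)$ with kernel $g(x,y)|_{H_{+}\times H_{+}}$; the unitary $f\mapsto f\circ\Phi$ intertwines them, so $\|\Gamma_{V}\|=\|A\|$.

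Next, let $\widehat{V}(x_{1},x_{2})=\widetilde{V}(x_{1},|x_{2}|)$ be the axial symmetric extension of $\widetilde{V}$ to all of $\mathbb{R}^{2}$. The hypothesis $1/V\in L_{loc}^{1}(S)$ passes, via the change of variables and the reflection, to $1/\widehat{V}\in L_{loc}^{1}(\mathbb{R}^{2})$, so Corollary \ref{CorR2} will be applicable. Because $V_{0}$ is spherically symmetric, the Green function is invariant under the axial reflection $T$ across $\partial H_{+}$: $g(Tx,Ty)=g(x,y)$. Using the unitary decomposition $L^{2}(\mathbb{R}^{2},\widehat{V})\cong L^{2}(H_{+},\widetilde{V})\oplus L^{2}(H_{+},\widetilde{V})$ given by $h\mapsto(h|_{H_{+}},h\circ T|_{H_{+}})$ turns $G_{\widehat{V}}$ into the self-adjoint block operator
\begin{equation*}
G_{\widehat{V}}\cong\begin{pmatrix} A & B \\ B & A \end{pmatrix},
\end{equation*}
where $B$ has kernel $g(x,Ty)$ on $H_{+}\times H_{+}$. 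The reflection principle (available because $V_{0}$ is $T$-symmetric) identifies $g(x,y)-g(x,Ty)$ with the Dirichlet Green function of $-\Delta+V_{0}$ on $H_{+}$, which is non-negative by the maximum principle; hence $0\leq B\leq A$ as operators. Diagonalizing the $2\times 2$ block by $\frac{1}{\sqrt{2}}\bigl(\begin{smallmatrix}I&I\\ I&-I\end{smallmatrix}\bigr)$ gives $\|G_{\widehat{V}}\|=\max(\|A+B\|,\|A-B\|)\leq 2\|A\|=2\|\Gamma_{V}\|$.

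To conclude, I would invoke Lemma \ref{LemH+} applied to $\widetilde{V}$ (trivially extended) to get $\func{Neg}(\widetilde{V},H_{+})\leq\func{Neg}(2\widetilde{V},\mathbb{R}^{2})$, and then use the pointwise inequality $2\widetilde{V}\leq 2\widehat{V}$ together with monotonicity of $\func{Neg}$ in the potential to obtain $\func{Neg}(2\widetilde{V},\mathbb{R}^{2})\leq\func{Neg}(2\widehat{V},\mathbb{R}^{2})$. Corollary \ref{CorR2} applied to $2\widehat{V}$ states that $\|G_{2\widehat{V}}\|\leq 1/2$ implies $\func{Neg}(2\widehat{V})=1$. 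Since the scaling of the measure yields $\|G_{2\widehat{V}}\|=2\|G_{\widehat{V}}\|\leq 4\|\Gamma_{V}\|$, the hypothesis $\|\Gamma_{V}\|\leq 1/8$ gives $\|G_{2\widehat{V}}\|\leq 1/2$, and hence $\func{Neg}(V,S)\leq 1$; combined with the always-valid $\func{Neg}(V,S)\geq 1$, equality follows.

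The main obstacle is the block-matrix norm bound $\|G_{\widehat{V}}\|\leq 2\|\Gamma_{V}\|$, which crucially requires both the $T$-invariance of $g$ (from spherical symmetry of $V_{0}$) and the pointwise inequality $g(x,Ty)\leq g(x,y)$ supplied by the reflection principle. A secondary technical point is the monotonicity $V_{1}\leq V_{2}\Rightarrow\func{Neg}(V_{1})\leq\func{Neg}(V_{2})$; this can either be invoked as a standard approximation fact, or bypassed by applying Lemma \ref{LemL}(b) directly to the axial extension $f\mapsto f(x_{1},|x_{2}|)$ with $c_{1}=c_{2}=2$, which gives the sharper reduction $\func{Neg}(\widetilde{V},H_{+})\leq\func{Neg}(\widehat{V},\mathbb{R}^{2})$ and leads to the same conclusion.
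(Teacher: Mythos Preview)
Your proposal is correct and follows essentially the same route as the paper: push $V$ to $H_{+}$ via $\Psi(z)=e^{z}$, identify $\Gamma_{V}$ with the restriction $G_{\widetilde V}|_{H_{+}}$ by unitary equivalence, extend the potential to $\mathbb{R}^{2}$ by axial symmetry, compare norms via the symmetry to get $\|G_{2\widehat V}\|\leq 4\|\Gamma_{V}\|$, and conclude with Corollary~\ref{CorR2}. The paper states the key norm inequality $\|G_{\widetilde V}\|\leq 2\|G_{\widetilde V}\|_{+}$ (your $\|G_{\widehat V}\|\leq 2\|A\|$) with only the hint ``using the symmetry of the potential and of the Green function''; your block-matrix/diagonalisation argument is a legitimate way to fill this in, and in fact you do not even need the reflection principle: once $G_{\widehat V}\geq 0$ (proved in Lemma~\ref{LemGf}), the diagonal form $A\pm B\geq 0$ already gives $\|B\|\leq\|A\|$ and hence $\|A\pm B\|\leq 2\|A\|$. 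Your remark that monotonicity of $\func{Neg}$ in the potential is delicate is well taken (the paper itself flags this in Proposition~\ref{Pab}); your bypass via Lemma~\ref{LemL}(b) applied to the symmetric extension $f\mapsto f(x_{1},|x_{2}|)$ is clean and in fact yields the slightly sharper $\func{Neg}(\widetilde V,H_{+})\leq\func{Neg}(\widehat V,\mathbb{R}^{2})$.
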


\begin{proof}
Consider the potential $\widetilde{V}$ on the half-plane $H_{+}=\left\{
x_{2}>0\right\} $ given by 
\begin{equation*}
\widetilde{V}\left( x\right) =V\left( \Phi \left( x\right) \right)
\left\vert \Phi ^{\prime }\left( x\right) \right\vert ^{2},
\end{equation*}%
for which we have by Lemma \ref{LemVW} that%
\begin{equation}
\func{Neg}\left( V,S\right) \leq \func{Neg}(\widetilde{V},H_{+}).
\label{NSNH}
\end{equation}%
Let us extend $\widetilde{V}$ from $H_{+}$ to $\mathbb{R}^{2}$ by symmetry
in the axis $x_{1}$. By Lemma \ref{LemH+} we have%
\begin{equation*}
\func{Neg}(\widetilde{V},H_{+})\leq \func{Neg}(2\widetilde{V},\mathbb{R}%
^{2}).
\end{equation*}%
Consider the operator $G_{\widetilde{V}}$ that acts in $L^{2}\left( \mathbb{R%
}^{2},\widetilde{\nu }\right) $ where $d\widetilde{\nu }=\widetilde{V}dx,$
and $G_{2\widetilde{V}}$ that acts in $L^{2}\left( \mathbb{R}^{2},2%
\widetilde{\nu }\right) $. It is easy to see that 
\begin{equation}
\left\Vert G_{2\widetilde{V}}\right\Vert =2\left\Vert G_{\widetilde{V}%
}\right\Vert ,  \label{gv0}
\end{equation}%
Denote by $\left\Vert G_{\widetilde{V}}\right\Vert _{+}$ the norm of the
operator $G_{\widetilde{V}}$ acting in $L^{2}\left( H_{+},\widetilde{\nu }%
\right) .$ Using the symmetry of the potential $\widetilde{V}$ in the axis $%
x_{1}$ and that of the Green function $g\left( x,y\right) $, one can easily
show that\label{rem: details about axial subsymmetry of the Green function}%
\begin{equation}
\left\Vert G_{\widetilde{V}}\right\Vert \leq 2\left\Vert G_{\widetilde{V}%
}\right\Vert _{+}.  \label{gv1}
\end{equation}%
Let us verify that%
\begin{equation}
\left\Vert G_{\widetilde{V}}\right\Vert _{+}=\left\Vert \Gamma
_{V}\right\Vert .  \label{gv2}
\end{equation}%
In fact, the operators $\Gamma _{V}$ in $L^{2}\left( S,\nu \right) $ and $G_{%
\widetilde{V}}$ in $L^{2}\left( H_{+},\widetilde{\nu }\right) $ are unitary
equivalent. Indeed, consider a mapping $f\mapsto \widetilde{f}$ from $%
L^{2}\left( S,\nu \right) $ to $L^{2}\left( H_{+},\widetilde{\nu }\right) $
defined by%
\begin{equation*}
\widetilde{f}\left( x\right) =f\left( \Phi \left( x\right) \right) .
\end{equation*}%
Then we have%
\begin{eqnarray*}
\Vert \widetilde{f}\Vert _{L^{2}\left( H_{+},\widetilde{\nu }\right) }^{2}
&=&\int_{H_{+}}\widetilde{f}^{2}\left( x\right) \widetilde{V}\left( x\right)
dx=\int_{H_{+}}f\left( \Phi \left( x\right) \right) ^{2}V\left( \Phi \left(
x\right) \right) \left\vert \Phi ^{\prime }\left( x\right) \right\vert ^{2}dx
\\
&=&\int_{S}f\left( z\right) ^{2}V\left( z\right) dz=\left\Vert f\right\Vert
_{L^{2}\left( S,\nu \right) }^{2},
\end{eqnarray*}%
so that this mapping is unitary. Next, we have, for any $x\in H_{+}$,%
\begin{eqnarray*}
\widetilde{\Gamma _{V}f}\left( x\right) &=&\Gamma _{V}f\left( \Phi \left(
x\right) \right) =\int_{S}\gamma \left( \Phi \left( x\right) ,w\right)
f\left( w\right) V\left( w\right) dw \\
&=&\int_{H_{+}}\gamma \left( \Phi \left( x\right) ,\Phi \left( y\right)
\right) f\left( \Phi \left( y\right) \right) V\left( \Phi \left( y\right)
\right) \left\vert \Phi ^{\prime }\left( y\right) \right\vert ^{2}dy \\
&=&\int_{H_{+}}g\left( x,y\right) \widetilde{f}\left( y\right) \widetilde{V}%
\left( y\right) dy \\
&=&G_{\widetilde{V}}\widetilde{f}\left( x\right) ,
\end{eqnarray*}%
that is, $\widetilde{\Gamma _{V}f}=G_{\widetilde{V}}\widetilde{f}$ which
implies the unitary equivalence of $\Gamma _{V}$ and $G_{\widetilde{V}}$. %
\label{rem: details}

Combining (\ref{gv0})-(\ref{gv2}), we conclude that 
\begin{equation*}
\left\Vert G_{2\widetilde{V}}\right\Vert \leq 4\left\Vert \Gamma
_{V}\right\Vert \leq \frac{1}{2}.
\end{equation*}%
Since $\frac{1}{\widetilde{V}}\in L_{loc}^{1}$, we have by Corollary \ref%
{CorR2} that $\func{Neg}(2\widetilde{V},\mathbb{R}^{2})=1.$
\end{proof}

In the next lemma, we prove an upper bound for the Green kernel $\gamma $.

\begin{lemma}
\label{Lemga<}For all $x,y\in S$, we have%
\begin{equation}
\gamma \left( x,y\right) \leq C\left( 1+\left\vert x_{1}\right\vert \wedge
\left\vert y_{1}\right\vert \right) +C\ln _{+}\frac{1}{\left\vert
x-y\right\vert }  \label{ga<}
\end{equation}%
with an absolute constant $C$.
\end{lemma}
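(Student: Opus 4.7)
\textbf{Proof plan for Lemma \ref{Lemga<}.}

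The starting point is the two-sided estimate \eqref{g<} for the Green function of $H_0$, which gives
$$
\gamma(x,y) = g(\Psi(x),\Psi(y)) \leq C\bigl(\ln\langle \Psi(x)\rangle \wedge \ln\langle \Psi(y)\rangle\bigr) + C\ln_+\frac{1}{|\Psi(x)-\Psi(y)|}.
$$
Since $|\Psi(z)|=e^{z_1}$, we have $\ln\langle \Psi(z)\rangle = \ln(e+e^{z_1}) \leq 1 + (z_1)_+ \leq 1+|z_1|$. Taking the minimum in $x$ and $y$ gives the first term of the desired bound: $\ln\langle\Psi(x)\rangle\wedge\ln\langle\Psi(y)\rangle \leq C(1+|x_1|\wedge|y_1|)$. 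The work is therefore to control $\ln_+\tfrac{1}{|\Psi(x)-\Psi(y)|}$.

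Assume without loss of generality that $x_1\leq y_1$, and set $w=y-x=(w_1,w_2)$ with $w_1\geq 0$ and $w_2\in(-\pi,\pi)$. Factor $|\Psi(x)-\Psi(y)|=e^{x_1}|e^w-1|$ and use the identity
$$
|e^w-1|^2=(e^{w_1}-1)^2 + 4e^{w_1}\sin^2(w_2/2),
$$
which will yield two complementary lower bounds on $|e^w-1|$. First, since $e^{w_1}-1\geq w_1$ for $w_1\geq 0$, $e^{w_1}\geq 1$, and $|\sin(w_2/2)|\geq |w_2|/\pi$ on $|w_2|\leq \pi$, we obtain
$$
|e^w-1|\geq \tfrac{2}{\pi}|w| = \tfrac{2}{\pi}|x-y|.
$$
Second, dropping the $\sin^2$ term gives $|e^w-1|\geq e^{w_1}-1$, so whenever $w_1\geq 1$ we have $|e^w-1|\geq (1-e^{-1})e^{w_1}$ and hence $|\Psi(x)-\Psi(y)|\geq (1-e^{-1})e^{y_1}$.

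Now split into two cases. If $w_1\leq 1$, the linear bound gives $|\Psi(x)-\Psi(y)|\geq \tfrac{2e^{x_1}}{\pi}|x-y|$, so
$$
\ln_+\frac{1}{|\Psi(x)-\Psi(y)|} \leq C + (-x_1)_+ + \ln_+\frac{1}{|x-y|};
$$
since $|x_1-y_1|=w_1\leq 1$ forces $\bigl||x_1|-|y_1|\bigr|\leq 1$, we get $(-x_1)_+\leq |x_1|\leq 1+|x_1|\wedge|y_1|$, which is what we need. If instead $w_1>1$, the exponential bound gives
$$
\ln_+\frac{1}{|\Psi(x)-\Psi(y)|} \leq C + (-y_1)_+;
$$
and $(-y_1)_+\leq |x_1|\wedge|y_1|$ because either $y_1\geq 0$ (and the term vanishes) or both $x_1\leq y_1<0$ and $|y_1|=|x_1|\wedge|y_1|$.

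The argument is essentially a careful calculation and presents no serious obstacle; the only real point to get right is to split into the two ranges of $w_1$ and to apply the appropriate lower bound in each. The linear bound $|e^w-1|\gtrsim |w|$ produces the $\ln_+\tfrac{1}{|x-y|}$ term in the target estimate, while the exponential bound $|e^w-1|\gtrsim e^{w_1}$ is what absorbs the case when $x$ and $y$ are far apart in the horizontal direction, giving the correction $|x_1|\wedge|y_1|$.
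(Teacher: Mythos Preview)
Your proof is correct and follows essentially the same route as the paper's: both start from \eqref{g<}, handle the first term via $\ln\langle e^z\rangle\leq C(1+|z_1|)$, and then reduce the second term to a lower bound on $|e^w-1|$ with $w=y-x$. The only difference is in how that lower bound is obtained: the paper proves $|1-e^z|\geq c\min(1,|z|)$ by a Taylor expansion for $|z|\leq 1$ and a compactness argument for $|z|>1$, and then applies this symmetrically in $x$ and $y$; you instead use the explicit identity $|e^w-1|^2=(e^{w_1}-1)^2+4e^{w_1}\sin^2(w_2/2)$ and split on $w_1\lessgtr 1$, which is a bit more hands-on but yields the same conclusion.
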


\begin{proof}
By (\ref{g<}) and (\ref{gadef}) we have%
\begin{equation*}
\gamma \left( x,y\right) \leq C\ln \left\langle e^{x}\right\rangle \wedge
\ln \left\langle e^{y}\right\rangle +C\ln _{+}\frac{1}{\left\vert
e^{x}-e^{y}\right\vert },
\end{equation*}%
where in the expressions $e^{x},e^{y}$ we regards $x,y$ are complex numbers.
Observe that 
\begin{equation}
\ln \left\langle e^{x}\right\rangle =\ln \left( e+\left\vert
e^{x}\right\vert \right) =\ln \left( e+e^{x_{1}}\right) \leq e+\left\vert
x_{1}\right\vert .  \label{ga1}
\end{equation}%
Let us show that%
\begin{equation}
\ln _{+}\frac{1}{\left\vert e^{x}-e^{y}\right\vert }\leq C+\left\vert
x\right\vert \wedge \left\vert y\right\vert +\ln _{+}\frac{1}{\left\vert
x-y\right\vert },  \label{ga2}
\end{equation}%
with some absolute constant $C$. Indeed, by symmetry between $x,y$, it
suffices to prove that 
\begin{equation}
\left\vert e^{x}-e^{y}\right\vert \geq ce^{-\left\vert x\right\vert }\min
\left( 1,\left\vert x-y\right\vert \right)  \label{ezu}
\end{equation}%
for all $x,y\in S$ and for some positive constant $c$. Indeed, setting $%
z=y-x $ we see that (\ref{ezu}) is equivalent to 
\begin{equation*}
\left\vert 1-e^{z}\right\vert \geq c\min \left( 1,\left\vert z\right\vert
\right) ,
\end{equation*}%
and the latter is true for $\left\vert z\right\vert \leq 1$ because 
\begin{equation*}
\left\vert 1-e^{z}\right\vert =\left\vert z+\sum_{k=2}^{\infty }\frac{z^{k}}{%
k!}\right\vert \geq \left\vert z\right\vert -\left\vert z\right\vert
\sum_{k=2}^{\infty }\frac{1}{k!}=\left( 3-e\right) \left\vert z\right\vert ,
\end{equation*}%
and for $\left\vert z\right\vert >1$ because the set $\left\{ z\in
S:\left\vert z\right\vert >1\right\} $ is separated from the only point $z=0$
in $\overline{S}$ where $e^{z}=1.$

Combining (\ref{ga1}), (\ref{ga2}) and noticing that $\left\vert
x\right\vert \leq \pi +\left\vert x_{1}\right\vert $, we obtain (\ref{ga<}).
\end{proof}

\section{Estimates of the norms of some integral operators}

\label{SecNorm}\setcounter{equation}{0}In this section we introduce tools
for estimating the norm of the operator $\Gamma _{V}$ from the previous
section. We start with an one-dimensional case that contains already all
difficulties.

\begin{lemma}
\label{Lemaln}Let $\mu $ be a Radon measure on $\mathbb{R}$ and consider the
following operator acting on $L^{2}\left( \mathbb{R},\mu \right) $:%
\begin{equation*}
Tf\left( x\right) =\int_{\mathbb{R}}\left( 1+\left\vert x\right\vert \wedge
\left\vert y\right\vert \right) f\left( y\right) d\mu \left( y\right) .
\end{equation*}%
For any $n\in \mathbb{Z}$, set%
\begin{equation*}
I_{n}=\left[ 2^{n-1},2^{n}\right] \ \text{for}\ n>0,\ \ I_{0}=\left[ -1,1%
\right] ,\ \ I_{n}=\left[ -2^{\left\vert n\right\vert },-2^{\left\vert
n\right\vert -1}\right] \ \text{for\ }n<0
\end{equation*}%
and%
\begin{equation}
\alpha _{n}=2^{\left\vert n\right\vert }\mu \left( I_{n}\right) .
\label{aln}
\end{equation}%
Then the following estimate holds:%
\begin{equation*}
\left\Vert T\right\Vert \leq 64\sup_{n\in \mathbb{Z}}\alpha _{n}.
\end{equation*}
\end{lemma}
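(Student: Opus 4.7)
The plan is to apply Schur's test to the symmetric kernel $K(x,y) = 1 + |x|\wedge|y|$ with the weight $h(x) = \sqrt{1+|x|}$. Since $K$ is symmetric, it suffices to show
\[
\int_{\mathbb{R}} K(x,y)\, h(y)\, d\mu(y) \leq 64\, S\, h(x) \quad \text{for } \mu\text{-a.e. } x,
\]
where $S = \sup_n \alpha_n$; this yields $\|T\| \leq 64 S$ at once. The weight $h$ is chosen because $h(x) \simeq 2^{|m|/2}$ on $I_m$, and the exponent $1/2$ is precisely the power of $(1+|x|)$ that balances the two halves of the dyadic decomposition of the integral; any other exponent leaves one side dominating and causes the bound to fail.

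To verify the Schur inequality, fix $x \in I_m$ and split $\int K h\, d\mu = \sum_n \int_{I_n} K h\, d\mu$. Three elementary bounds on $I_n$ drive everything: $h(y) \leq \sqrt{2}\cdot 2^{|n|/2}$; $\mu(I_n) = \alpha_n\cdot 2^{-|n|} \leq S\cdot 2^{-|n|}$ by the definition of $\alpha_n$; and $K(x,y) \leq 1 + 2^{\min(|m|,|n|)} \leq 2\cdot 2^{\min(|m|,|n|)}$ since $|x| \leq 2^{|m|}$ and $|y| \leq 2^{|n|}$. Combining,
\[
\int K h\, d\mu \;\leq\; 2\sqrt{2}\, S \sum_{n \in \mathbb{Z}} 2^{\min(|m|,|n|) - |n|/2}.
\]
The series splits at $|n| = |m|$: the inner sum $\sum_{|n|\leq|m|} 2^{|n|/2}$ is geometric and $\lesssim 2^{|m|/2}$, while the tail $2^{|m|}\sum_{|n|>|m|} 2^{-|n|/2}$ is again geometric and $\lesssim 2^{|m|/2}$. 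Combining with the lower bound $h(x) \geq 2^{|m|/2}/\sqrt{2}$ on $I_m$ (which also holds for $m=0$) delivers the desired inequality.

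The entire argument is a short Schur-test computation with no analytic subtlety. The only real obstacle is constant-tracking: to reach the explicit target $64$ rather than an unspecified absolute constant, one must carefully account for the doubling coming from summing over both signs of $n$, use the sharper inequality $K \leq 1 + 2^{\min(|m|,|n|)}$ rather than its crude variant when $\min(|m|,|n|) = 0$, and handle the special interval $I_0 = [-1,1]$ where $|n|$ collapses. A careful accounting of the two geometric series, whose closed forms involve the factor $1/(1 - 2^{-1/2}) = 2 + \sqrt{2}$, yields a final constant of roughly $47$, which sits comfortably below the target $64$.
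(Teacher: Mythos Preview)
Your Schur-test argument is correct and self-contained; the constant you obtain, $24+16\sqrt{2}\approx 46.6$, indeed sits below $64$. The only places one should be slightly careful are (i) the intervals $I_n$ overlap at endpoints, so $\sum_n\int_{I_n}\geq \int_{\mathbb{R}}$ rather than equality, but since you are proving an upper bound this works in your favor; and (ii) Schur's test requires the weight to be positive $\mu$-a.e., which is immediate here since $h\geq 1$.

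Your route is genuinely different from the paper's. The paper first splits $T=T_1+T_2$ according to whether $|y|\leq|x|$ or $|y|\geq|x|$, observes that $T_1$ and $T_2$ are mutually adjoint so $\|T\|\leq 2\|T_1\|$, and after two further symmetry reductions arrives at a one-sided operator $T_5 f(x)=\int_0^{x_+}(1+y)f(y)\,d\mu(y)$ on $\mathbb{R}_+$. It then bounds $\|T_5\|$ by appealing to a discrete weighted Hardy inequality of Bennett, obtaining $\|T_5\|\leq 8\alpha$ and hence $\|T\|\leq 64\alpha$. Your approach bypasses both the adjoint splitting and the external Hardy inequality: the choice $h(x)=\sqrt{1+|x|}$ effectively encodes the same balance that the Hardy inequality provides, and the two geometric tails replace the Hardy machinery. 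What the paper's approach buys is a transparent connection to Hardy-type inequalities, which is thematically relevant elsewhere; what your approach buys is a shorter, fully elementary proof with a slightly sharper constant and no citation needed.
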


\begin{proof}
Let us represent the operator $T$ as the sum $T=T_{1}+T_{2}$ where%
\begin{eqnarray*}
T_{1}f\left( x\right) &=&\int_{\left\{ \left\vert y\right\vert \leq
\left\vert x\right\vert \right\} }\left( 1+\left\vert y\right\vert \right)
f\left( y\right) d\mu \left( y\right) \\
T_{2}f\left( x\right) &=&\int_{\left\{ \left\vert y\right\vert \geq
\left\vert x\right\vert \right\} }\left( 1+\left\vert x\right\vert \right)
f\left( y\right) d\mu \left( y\right) .
\end{eqnarray*}%
These operators are clearly adjoint in $L^{2}\left( \mathbb{R},\mu \right) $
which implies that $\left\Vert T_{1}\right\Vert =\left\Vert T_{2}\right\Vert
.$ Hence, $\left\Vert T\right\Vert \leq 2\left\Vert T_{1}\right\Vert .$ The
operator $T_{1}$ can be further split into the sum $T_{1}=T_{3}+T_{4}$ where%
\begin{eqnarray*}
T_{3}f\left( x\right) &=&\int_{0}^{\left\vert x\right\vert }\left(
1+\left\vert y\right\vert \right) f\left( y\right) d\mu \left( y\right) \\
T_{4}f\left( x\right) &=&\int_{-\left\vert x\right\vert }^{0}\left(
1+\left\vert y\right\vert \right) f\left( y\right) d\mu \left( y\right) .
\end{eqnarray*}%
We will estimate $\left\Vert T_{3}\right\Vert $ via $\alpha =\sup \alpha
_{n} $, and by symmetry $\left\Vert T_{4}\right\Vert $ could be estimated in
the same way. The operator $T_{3}$ splits further into the sum $%
T_{3}=T_{5}+T_{6} $ where%
\begin{eqnarray*}
T_{5}f\left( x\right) &=&\int_{0}^{x_{+}}\left( 1+y\right) f\left( y\right)
d\mu \left( y\right) \\
T_{6}f\left( x\right) &=&\int_{0}^{x_{-}}\left( 1+y\right) f\left( y\right)
d\mu \left( y\right) .
\end{eqnarray*}%
Clearly, we have $\left\Vert T_{5}\right\Vert =\left\Vert T_{6}\right\Vert $
and, hence, $\left\Vert T_{3}\right\Vert \leq 2\left\Vert T_{5}\right\Vert .$
Since $T_{5}f\left( x\right) $ vanishes for $x\leq 0$, it suffices to
estimate $\left\Vert T_{5}\right\Vert $ in the space $L^{2}\left( \mathbb{R}%
_{+},\mu \right) .$

In what follows we redefine $I_{0}$ to be $I_{0}=\left[ 0,1\right] $, which
only reduces $\alpha _{0}$ and improves the estimates. Fix a non-negative
function $f\in L^{2}\left( \mathbb{R}_{+},\mu \right) $ and set for any
non-negative integer $n$ 
\begin{equation*}
w_{n}=\frac{1}{\sqrt{\alpha _{n}}}\int_{I_{n}}fd\mu .
\end{equation*}%
For any $x\in I_{n}$ we have%
\begin{equation*}
T_{5}f\left( x\right) \leq \int_{0}^{2^{n}}\left( 1+y\right) f\left(
y\right) d\mu \left( y\right) \leq \sum_{k=0}^{n}\left( 1+2^{k}\right)
\int_{I_{k}}fd\mu \leq \sum_{k=0}^{n}2^{k+1}w_{k}\sqrt{\alpha _{k}}.
\end{equation*}%
It follows that%
\begin{eqnarray*}
\left\Vert T_{5}f\right\Vert _{L^{2}\left( \mathbb{R}_{+},\mu \right) }^{2}
&=&\sum_{n=0}^{\infty }\int_{I_{n}}\left( T_{5}f\left( x\right) \right)
^{2}d\mu \left( x\right)  \\
&\leq &\sum_{n=0}^{\infty }\left( \sum_{k=0}^{n}2^{k+1}w_{k}\sqrt{\alpha _{k}%
}\right) ^{2}\mu \left( I_{n}\right)  \\
&=&4\sum_{n=0}^{\infty }\left( \sum_{k=0}^{n}2^{k}w_{k}\sqrt{\alpha _{k}}%
\right) ^{2}\frac{\alpha _{n}}{2^{n}}.
\end{eqnarray*}%
Using $\alpha _{n}\leq \alpha $, we obtain%
\begin{equation}
\left\Vert T_{5}f\right\Vert _{L^{2}\left( \mathbb{R}_{+},\mu \right)
}^{2}\leq 4\alpha ^{2}\sum_{n=0}^{\infty }\frac{1}{2^{n}}\left(
\sum_{k=0}^{n}2^{k}w_{k}\right) ^{2}.  \label{T5f}
\end{equation}%
On the other hand, we have%
\begin{eqnarray}
\left\Vert f\right\Vert _{L^{2}\left( \mathbb{R}_{+},\mu \right) }^{2}
&=&\sum_{n=0}^{\infty }\int_{I_{n}}f^{2}d\mu \geq \sum_{n=0}^{\infty }\frac{1%
}{\mu \left( I_{n}\right) }\left( \int_{I_{n}}fd\mu \right) ^{2}  \notag \\
&=&\sum_{n=0}^{\infty }\frac{2^{n}}{\alpha _{n}}w_{n}^{2}\alpha
_{n}=\sum_{n=0}^{\infty }2^{n}w_{n}^{2}.  \label{f25}
\end{eqnarray}%
Let us prove that%
\begin{equation}
\sum_{n=0}^{\infty }\frac{1}{2^{n}}\left( \sum_{k=0}^{n}2^{k}w_{k}\right)
^{2}\leq 16\sum_{n=0}^{\infty }2^{n}w_{n}^{2}.  \label{wHardy}
\end{equation}%
This is nothing other than a discrete weighted Hardy inequality. By \cite%
{Bennett}, if, for some $r>s\geq 1$ and for non-negative sequences $\left\{
u_{k}\right\} _{k=0}^{N},\left\{ v_{k}\right\} _{k=0}^{N}$, the following
inequality is satisfied%
\begin{equation}
\sum_{n=0}^{m}u_{n}\left( \sum_{k=0}^{n}v_{k}\right) ^{r}\leq \left(
\sum_{k=0}^{m}v_{k}\right) ^{s}\ \ \text{for }m=0,...,N,  \label{uvk}
\end{equation}%
then, for all non-negative sequences $\left\{ w_{k}\right\} _{k=0}^{N}$,%
\begin{equation}
\sum_{n=0}^{N}u_{n}\left( \sum_{k=0}^{n}v_{k}w_{k}\right) ^{r}\leq \left( 
\frac{r}{r-s}\right) ^{r}\left( \sum_{k=0}^{N}v_{k}w_{k}^{r/s}\right) ^{s}.
\label{rsr}
\end{equation}%
We apply this result with $r=2$, $s=1,$ $v_{n}=2^{k}$ and $u_{n}=2^{-n-2}.$
Then (\ref{uvk}) holds because 
\begin{equation*}
\sum_{n=0}^{m}2^{-n-2}\left( \sum_{k=0}^{n}2^{k}\right)
^{2}=\sum_{n=0}^{m}2^{-n-2}\left( 2^{n+1}-1\right) ^{2}\leq
\sum_{n=0}^{m}2^{n},
\end{equation*}%
and (\ref{rsr}) yields%
\begin{equation*}
\sum_{n=0}^{N}\frac{1}{2^{n+2}}\left( \sum_{k=0}^{n}2^{k}w_{k}\right)
^{2}\leq 4\sum_{k=0}^{N}2^{k}w_{k}^{2},
\end{equation*}%
which is equivalent to (\ref{wHardy}). The latter together with (\ref{T5f})
and (\ref{f25}) implies that $\left\Vert T_{5}\right\Vert \leq 8\alpha .$ It
follows that $\left\Vert T_{3}\right\Vert \leq 16\alpha $. As $T_{4}$ admits
the same estimate, we obtain $\left\Vert T_{1}\right\Vert \leq 32\alpha ,$
whence $\left\Vert T\right\Vert \leq 64\alpha ,$ which was to be proved.
\end{proof}

Consider the strip 
\begin{equation*}
S=\left\{ \left( x_{1},x_{2}\right) \in \mathbb{R}^{2}:0<x_{2}<\pi \right\}
\end{equation*}%
and its partition into rectangles $S_{n},$ $n\in \mathbb{Z}$, defined by%
\begin{equation}
S_{n}=\left\{ 
\begin{array}{ll}
\left\{ x\in S:2^{n-1}<x_{1}<2^{n}\right\} , & n>0, \\ 
\left\{ x\in S:-1<x_{1}<1\right\} , & n=0, \\ 
\left\{ x\in S:-2^{\left\vert n\right\vert }<x_{1}<-2^{\left\vert
n\right\vert -1}\right\} , & n<0.%
\end{array}%
\right.  \label{Sn}
\end{equation}

\begin{lemma}
\label{Leman}Let $\nu $ be a Radon measure on the strip $S$, absolutely
continuous with respect to the Lebesgue measure. For any $n\in \mathbb{Z}$,
set 
\begin{equation*}
a_{n}=2^{\left\vert n\right\vert }\nu \left( S_{n}\right) .
\end{equation*}%
Then the following integral operator 
\begin{equation*}
Tf\left( x\right) =\int_{S}\left( 1+\left\vert x_{1}\right\vert \wedge
\left\vert y_{1}\right\vert \right) f\left( y\right) d\nu \left( y\right)
\end{equation*}%
admits the following norm estimate in $L^{2}\left( S,\nu \right) $:%
\begin{equation}
\left\Vert T\right\Vert \leq 64\sup_{n\in \mathbb{Z}}a_{n}.  \label{Tan}
\end{equation}
\end{lemma}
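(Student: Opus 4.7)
The kernel $K(x,y) = 1 + |x_1| \wedge |y_1|$ depends only on the first coordinates, so this is essentially a one-dimensional estimate, and the plan is to reduce it directly to Lemma \ref{Lemaln}.

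First I would push forward $\nu$ under the projection $\pi: S \to \mathbb{R}$, $\pi(x_1, x_2) = x_1$, to obtain a Radon measure $\mu = \pi_* \nu$ on $\mathbb{R}$. Since $\nu$ is absolutely continuous with density $V$, one has $d\mu(y_1) = W(y_1) dy_1$ with $W(y_1) = \int_0^\pi V(y_1, y_2) dy_2$, and by construction $\mu(I_n) = \nu(S_n)$ for the intervals $I_n$ of Lemma \ref{Lemaln}. Consequently the numbers $\alpha_n$ attached to $\mu$ by (\ref{aln}) coincide with the numbers $a_n$ of the present lemma.

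Next, for $f \in L^2(S, \nu)$ I would define the vertical conditional average
\[
H(y_1) = \frac{1}{W(y_1)} \int_0^\pi f(y_1, y_2) V(y_1, y_2) dy_2
\]
(setting $H(y_1) = 0$ where $W(y_1) = 0$). Since $K$ depends only on the first coordinates, Fubini gives $Tf(x) = T_1 H(x_1)$, where $T_1$ denotes the one-dimensional operator of Lemma \ref{Lemaln} acting on $L^2(\mathbb{R}, \mu)$. Fiberwise Cauchy--Schwarz yields $|H(y_1)|^2 W(y_1) \leq \int_0^\pi f(y_1, y_2)^2 V(y_1, y_2) dy_2$, hence $\|H\|_{L^2(\mathbb{R}, \mu)} \leq \|f\|_{L^2(S, \nu)}$, while the identity $\|Tf\|_{L^2(S, \nu)} = \|T_1 H\|_{L^2(\mathbb{R}, \mu)}$ follows from $Tf$ being a function of $x_1$ alone together with $W(x_1) dx_1 = d\mu(x_1)$. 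Applying Lemma \ref{Lemaln} then gives $\|Tf\|_{L^2(S,\nu)} \leq 64 \sup_n a_n \cdot \|f\|_{L^2(S,\nu)}$, which is (\ref{Tan}). There is no genuine obstacle beyond this bookkeeping; the content of the lemma is simply that $K$ is effectively one-dimensional and conditional averaging is an $L^2$-contraction.
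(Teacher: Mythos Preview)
Your proof is correct and is essentially the same as the paper's: the paper also pushes $\nu$ forward to $\mu$ on $\mathbb{R}$, constructs your conditional average $H$ (denoted $f'$ there, obtained abstractly as the Radon--Nikodym derivative $d\mu_f/d\mu$), and uses the same Cauchy--Schwarz contraction $\|f'\|_{L^2(\mu)}\le\|f\|_{L^2(\nu)}$ together with $Tf(x)=T'f'(x_1)$ to reduce to Lemma~\ref{Lemaln}. Your explicit formula for $H$ in terms of the density $V$ is a slightly cleaner presentation of the same idea.
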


\begin{proof}
Introduce a Radon measure $\mu $ on $\mathbb{R}$ by%
\begin{equation*}
\mu \left( A\right) =\nu \left( A\times \left( 0,\pi \right) \right) .
\end{equation*}%
For the quantities $\alpha _{n},$ defined for measure $\mu $ by (\ref{aln}),
we obviously have the identity $\alpha _{n}=a_{n}$. The estimate (\ref{Tan})
will follow from Lemma \ref{Lemaln} if we prove that $\left\Vert
T\right\Vert \leq \left\Vert T^{\prime }\right\Vert $ where $T^{\prime }$ is
the following operator in $L^{2}\left( \mathbb{R},\mu \right) $:%
\begin{equation*}
T^{\prime }f\left( t\right) =\int_{\mathbb{R}}\left( 1+t\wedge s\right)
f\left( s\right) d\mu \left( s\right) .
\end{equation*}%
It suffices to prove that, for any non-zero bounded function $f\in
L^{2}\left( S,\nu \right) $, there is a function $f^{\prime }\in L^{2}\left( 
\mathbb{R},\mu \right) $ such that%
\begin{equation*}
\frac{\left\Vert Tf\right\Vert }{\left\Vert f\right\Vert }\leq \frac{%
\left\Vert T^{\prime }f^{\prime }\right\Vert }{\left\Vert f^{\prime
}\right\Vert },
\end{equation*}%
where the norms are taken in the appropriate spaces. For any measurable set $%
A\subset \mathbb{R}$, set%
\begin{equation*}
\mu _{f}\left( A\right) =\int_{A\times \left( 0,\pi \right) }fd\nu .
\end{equation*}%
Since $f$ is bounded, measure $\mu _{f}$ is absolutely continuous with
respect to $\mu $, so that there exists a function $f^{\prime }$ such that $%
d\mu _{f}=f^{\prime }d\mu .$ In the same way, there is a function $f^{\prime
\prime }$ such that $d\mu _{f^{2}}=f^{\prime \prime }d\mu $. Since%
\begin{equation*}
\left( \int_{A\times \left( 0,\pi \right) }fd\nu \right) ^{2}\leq \left(
\int_{A\times \left( 0,\pi \right) }f^{2}d\nu \right) \mu \left( A\right) ,
\end{equation*}%
it follows that%
\begin{equation*}
\mu _{f}\left( A\right) ^{2}\leq \mu _{f^{2}}\left( A\right) \mu \left(
A\right)
\end{equation*}%
whence%
\begin{equation*}
\left( f^{\prime }\right) ^{2}\leq f^{\prime \prime }.
\end{equation*}%
It follows that 
\begin{equation*}
\left\Vert f^{\prime }\right\Vert _{L^{2}\left( \mathbb{R},\mu \right)
}^{2}=\int_{\mathbb{R}}\left( f^{\prime }\right) ^{2}d\mu \leq \int_{\mathbb{%
R}}f^{\prime \prime }d\mu =\int_{\mathbb{R}}\mu _{f^{2}}=\int_{S}f^{2}d\nu
=\left\Vert f^{2}\right\Vert _{L^{2}\left( S,\nu \right) }^{2}.
\end{equation*}

It remains to show that $\left\Vert Tf\right\Vert =\left\Vert T^{\prime
}f^{\prime }\right\Vert $. Using the notation $\tau \left( t,s\right)
=1+t\wedge s,$ we have\label{rem: details of Fubini}%
\begin{eqnarray*}
Tf\left( x\right) &=&\int_{S}\tau \left( x_{1},y_{1}\right) f\left( y\right)
d\nu \left( y\right) =\int_{\mathbb{R}}\tau \left( x_{1},y_{1}\right) d\mu
_{f}\left( y_{1}\right) \\
&=&\int_{\mathbb{R}}\tau \left( x_{1},y_{1}\right) f^{\prime }\left(
y_{1}\right) d\mu \left( y_{1}\right) =T^{\prime }f^{\prime }\left(
x_{1}\right) .
\end{eqnarray*}%
It follows that%
\begin{equation*}
\left\Vert Tf\right\Vert _{L^{2}\left( S,\nu \right) }^{2}=\int_{S}\left(
Tf\left( x\right) \right) ^{2}d\nu =\int_{\mathbb{R}}\left( T^{\prime
}f^{\prime }\left( x_{1}\right) \right) ^{2}d\mu =\left\Vert T^{\prime
}f^{\prime }\right\Vert _{L^{2}\left( \mathbb{R},\mu \right) }^{2},
\end{equation*}%
which finishes the proof.
\end{proof}

\section{Estimating the number of negative eigenvalues in a strip}

\label{SecW}\setcounter{equation}{0}Let $V$ be a potential in the strip 
\begin{equation*}
S=\left\{ \left( x_{1},x_{2}\right) \in \mathbb{R}^{2}:x_{1}\in \mathbb{R},\
0<x_{2}<\pi \right\} .
\end{equation*}
For any $n\in \mathbb{Z}$ set%
\begin{equation}
a_{n}\left( V\right) =\int_{S_{n}}\left( 1+\left\vert x_{1}\right\vert
\right) V\left( x\right) dx,  \label{anV}
\end{equation}%
where $S_{n}$ is defined by (\ref{Sn}). It is easy to see that%
\begin{equation}
2^{\left\vert n\right\vert -1}\int_{S_{n}}V\left( x\right) dx\leq
a_{n}\left( V\right) \leq 2^{\left\vert n\right\vert +1}\int_{S_{n}}V\left(
x\right) dx.  \label{an2n}
\end{equation}%
Fix $p>1$ and set also%
\begin{equation}
b_{n}\left( V\right) =\left( \int_{S\cap \left\{ n<x_{1}<n+1\right\}
}V^{p}\left( x\right) dx\right) ^{1/p}.  \label{bnV}
\end{equation}

\subsection{Condition for one negative eigenvalue}

\label{SecOneStrip}

\begin{lemma}
\label{LemGaV}The operator $\Gamma _{V}$ defined by \emph{(\ref{gadef})-(\ref%
{Gadef})} admits the following norm estimate in $L^{2}\left( S,Vdx\right) $:%
\begin{equation}
\left\Vert \Gamma _{V}\right\Vert \leq C\sup_{n\in \mathbb{Z}}a_{n}\left(
V\right) +C\sup_{x\in S}\int_{S}\ln _{+}\frac{1}{\left\vert x-y\right\vert }%
V\left( y\right) dy.  \label{normGaV}
\end{equation}%
where $C$ is an absolute constant. Consequently,%
\begin{equation}
\left\Vert \Gamma _{V}\right\Vert \leq C\sup_{n\in \mathbb{Z}}a_{n}\left(
V\right) +C_{p}\sup_{n\in Z}b_{n}\left( V\right) ,  \label{normGab}
\end{equation}%
where $C_{p}$ depends on $p$.
\end{lemma}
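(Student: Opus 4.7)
The natural approach is to split the kernel $\gamma(x,y)$ of $\Gamma_V$ according to the pointwise bound
\[
\gamma(x,y)\le C(1+|x_1|\wedge|y_1|)+C\ln_+\frac{1}{|x-y|}
\]
from Lemma~\ref{Lemga<}, so that $\Gamma_V$ is dominated (as an operator on $L^2(S,Vdx)$ with a non-negative kernel) by $T_1+T_2$, where
\[
T_1 f(x)=C\!\int_S(1+|x_1|\wedge|y_1|)f(y)V(y)\,dy,\qquad T_2 f(x)=C\!\int_S\ln_+\tfrac{1}{|x-y|}\,f(y)V(y)\,dy.
\]
Both kernels are non-negative, so it suffices to bound $\|T_1\|$ and $\|T_2\|$ separately in $L^2(S,Vdx)$.

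\medskip
\textbf{Bounding $T_1$.} The operator $T_1$ is, up to the constant $C$, exactly the operator studied in Lemma~\ref{Leman} with $d\nu=V(y)dy$. Since the quantities $a_n$ appearing there coincide with $a_n(V)$ defined in \eqref{anV} (indeed, $(1+|x_1|)\asymp 2^{|n|}$ on $S_n$ for $n\ne 0$, with a trivial bound for $n=0$), Lemma~\ref{Leman} yields directly
\[
\|T_1\|\le C\sup_{n\in\mathbb{Z}}a_n(V).
\]

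\medskip
\textbf{Bounding $T_2$.} Because the kernel $K(x,y):=C\ln_+\tfrac{1}{|x-y|}V(y)$ is non-negative and the kernel of $T_2$ with respect to the measure $Vdx$ (namely $C\ln_+\tfrac{1}{|x-y|}$) is symmetric in $x,y$, Schur's test applied with the weight $w\equiv 1$ gives
\[
\|T_2\|\le C\sup_{x\in S}\int_S\ln_+\tfrac{1}{|x-y|}\,V(y)\,dy,
\]
which is exactly the second term in \eqref{normGaV}. Combining the two estimates gives \eqref{normGaV}.

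\medskip
\textbf{From \eqref{normGaV} to \eqref{normGab}.} Fix $x\in S$ and observe that $\ln_+\tfrac{1}{|x-y|}$ vanishes for $|x-y|\ge 1$. If $n$ is the integer with $n\le x_1<n+1$, then the unit disk around $x$ meets at most three of the vertical slabs $T_k:=S\cap\{k<y_1<k+1\}$ (for $k=n-1,n,n+1$). On each such slab, Hölder's inequality with exponents $p,p'=p/(p-1)$ gives
\[
\int_{T_k}\ln_+\tfrac{1}{|x-y|}V(y)\,dy\le\Bigl(\int_{T_k}V^p\,dy\Bigr)^{1/p}\Bigl(\int_{|x-y|<1}\bigl(\ln_+\tfrac{1}{|x-y|}\bigr)^{p'}dy\Bigr)^{1/p'}=C_p\,b_k(V),
\]
because the second factor is a constant depending only on $p$ (the integral of $(\ln_+(1/r))^{p'}$ over the unit disk is finite). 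Summing over the at most three relevant values of $k$ and taking the supremum over $x$ yields
\[
\sup_{x\in S}\int_S\ln_+\tfrac{1}{|x-y|}V(y)\,dy\le C_p\sup_{n\in\mathbb{Z}}b_n(V),
\]
which combined with \eqref{normGaV} proves \eqref{normGab}. The only mildly delicate point is the bookkeeping of how many vertical slabs the unit disk around $x$ intersects; everything else is a routine application of Schur's test, Hölder's inequality, and Lemma~\ref{Leman}.
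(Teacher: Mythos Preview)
Your proof is correct and follows essentially the same route as the paper: split the kernel via Lemma~\ref{Lemga<}, bound the first piece by Lemma~\ref{Leman} (using $2^{|n|}\nu(S_n)\asymp a_n(V)$ from \eqref{an2n}), bound the second piece by the Schur/trivial bound $\sup_x\int K(x,\cdot)\,d\nu$, and then pass to \eqref{normGab} via H\"older and the observation that a unit disk meets at most three slabs $Q_n$. The only cosmetic difference is that the paper calls the Schur bound ``trivial'' rather than naming it, and uses the phrasing ``covered by at most $3$ rectangles $Q_n$'' for the slab count.
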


\begin{proof}
Consider measure $\nu $ in $S$ given by $d\nu =Vdx.$ By (\ref{ga<}) we have,
for any non-negative function $f$ on $S$,%
\begin{equation}
\Gamma _{V}f\left( x\right) \leq C\int_{S}\left( 1+\left\vert
x_{1}\right\vert \wedge \left\vert y_{1}\right\vert \right) f\left( y\right)
d\nu \left( y\right) +C\int_{S}\ln _{+}\frac{1}{\left\vert x-y\right\vert }%
f\left( y\right) d\nu \left( y\right) .  \label{G12}
\end{equation}%
By Lemma \ref{Leman}, the norm of the first integral operator in (\ref{G12})
is bounded by 
\begin{equation*}
64\sup_{n}2^{\left\vert n\right\vert }\nu \left( S_{n}\right) \leq
128\sup_{n}a_{n}\left( V\right) ,
\end{equation*}%
where we have used (\ref{an2n}). The norm of the second integral operator in
(\ref{G12}) is trivially bounded by%
\begin{equation*}
\sup_{x\in S}\int_{S}\ln _{+}\frac{1}{\left\vert x-y\right\vert }d\nu \left(
y\right) ,
\end{equation*}%
whence (\ref{normGaV}) follows.

For the second part, we have by the H\"{o}lder inequality%
\begin{eqnarray*}
\int_{S}\ln _{+}\frac{1}{\left\vert x-y\right\vert }V\left( y\right) dy
&\leq &\left( \int_{D_{1}\left( x\right) }\left( \ln _{+}\frac{1}{\left\vert
x-y\right\vert }\right) ^{p^{\prime }}dy\right) ^{1/p^{\prime }} \\
&&\times \left( \int_{D_{1}\left( x\right) \cap S}V^{p}\left( y\right)
dy\right) ^{1/p},
\end{eqnarray*}%
where $p^{\prime }$ is the H\"{o}lder conjugate to $p$ and $D_{1}\left(
x\right) $ is the disk of radius $1$ centered at $x$. The first integral is
equal to a finite constant depending only on $p$, but independent of $x$.
Since $D_{1}\left( x\right) \cap S$ is covered by at most $3$ rectangles $%
Q_{n}$, the second integral is bounded by $3\sup_{n}b_{n}\left( V\right) .$
Substituting into (\ref{normGaV}), we obtain (\ref{normGab}).
\end{proof}

\begin{remark}
\RM Although the norm of the first integral operator in (\ref{G12}) can be
trivially bounded by%
\begin{equation*}
\sup_{x\in S}\int_{S}\left( 1+\left\vert x_{1}\right\vert \wedge \left\vert
y_{1}\right\vert \right) d\nu \left( y\right) ,
\end{equation*}%
this estimate is weaker than the one by Lemma \ref{Leman} and is certainly
not good enough for our purposes.
\end{remark}

\begin{proposition}
\label{Pab}\label{Corab}There is a constant $c>0$ such that 
\begin{equation}
\sup_{n}a_{n}\left( V\right) \leq c\ \ \text{and\ \ \ }\sup_{n}b_{n}\left(
V\right) \leq c\ \ \Rightarrow \ \func{Neg}\left( V,S\right) =1.  \label{cc}
\end{equation}
\end{proposition}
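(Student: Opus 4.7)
The plan is to combine Lemma~\ref{LemGaV} with Lemma~\ref{Lem1/8} in a direct way. Let $C$ and $C_{p}$ be the constants from (\ref{normGab}). If I choose $c>0$ so small that $(C+C_{p})c\leq 1/8$, then the hypotheses $\sup_{n}a_{n}(V)\leq c$ and $\sup_{n}b_{n}(V)\leq c$ imply $\Vert\Gamma_{V}\Vert\leq 1/8$ by Lemma~\ref{LemGaV}. Assuming for the moment that the technical condition $1/V\in L^{1}_{loc}(S)$ holds, Lemma~\ref{Lem1/8} gives $\func{Neg}(V,S)=1$ at once, matched by the universal lower bound $\func{Neg}(V,S)\geq 1$ proved in Section~\ref{SecGen}.

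The only genuine issue is to lift the hypothesis $1/V\in L^{1}_{loc}(S)$. To do this I would approximate $V$ from above by $V_{\varepsilon}:=V+\varepsilon\rho$, where $\rho(x):=(1+x_{1}^{2})^{-1}$ is continuous and strictly positive on $S$. Then $1/V_{\varepsilon}\leq 1/(\varepsilon\rho)$ is locally bounded, so in particular in $L^{1}_{loc}(S)$. A direct calculation using the $\pi$-wide vertical dimension of $S$ and the estimate $(1+|x_{1}|)(1+x_{1}^{2})^{-1}\leq 2/|x_{1}|$ for $|x_{1}|\geq 1$ shows that $\sup_{n}a_{n}(\rho)<\infty$, and similarly a computation on the dyadic rectangles of width $1$ shows $\sup_{n}b_{n}(\rho)<\infty$. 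Since
\[
a_{n}(V_{\varepsilon})=a_{n}(V)+\varepsilon a_{n}(\rho),\qquad b_{n}(V_{\varepsilon})\leq b_{n}(V)+\varepsilon b_{n}(\rho),
\]
halving the threshold $c$ if necessary and then choosing $\varepsilon$ small enough in terms of $\sup_{n}a_{n}(\rho)$ and $\sup_{n}b_{n}(\rho)$, we get $\sup_{n}a_{n}(V_{\varepsilon})\leq c$ and $\sup_{n}b_{n}(V_{\varepsilon})\leq c$, and the first paragraph applies to $V_{\varepsilon}$ to give $\func{Neg}(V_{\varepsilon},S)=1$.

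The main obstacle is the last comparison step: transferring $\func{Neg}(V_{\varepsilon},S)=1$ back to $\func{Neg}(V,S)=1$. Although $V\leq V_{\varepsilon}$ pointwise, the form domains satisfy $\mathcal{F}_{V_{\varepsilon}}\subsetneq\mathcal{F}_{V}$ in general, because a test function $f\in\mathcal{F}_{V}$ need not satisfy $\int_{S}\rho f^{2}\,dx<\infty$; hence Lemma~\ref{LemL} does not apply immediately. I would bypass this by going through the bounded-test-function framework of Section~\ref{SecNegb}: reduce from $\func{Neg}$ to $\func{Neg}^{b}$ using Lemma~\ref{Lem2V}, after which test functions may be taken bounded and compactly supported by truncation $(f\wedge n)\vee(-n)$ against a cutoff $\chi_{R}$, and for such $f$ the extra term $\varepsilon\int_{S}\rho f^{2}\,dx$ is finite and tends to $0$ as $\varepsilon\to 0$. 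This preserves the sign of the energy form on a subspace of the required dimension and yields $\func{Neg}(V,S)\leq\func{Neg}(V_{\varepsilon},S)=1$, completing the proof. The quantitative control of this truncation-and-limit argument (ensuring that non-positivity of $\mathcal{E}_{V}$ on a subspace passes to non-positivity of $\mathcal{E}_{V_{\varepsilon}}$ on a subspace of the same dimension) is where the real technical care is needed.
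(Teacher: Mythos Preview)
Your approach is essentially the paper's: perturb $V$ to $V_{\varepsilon}=V+\varepsilon\rho$ with $\rho>0$ so that $1/V_{\varepsilon}\in L^{1}_{loc}$, apply Lemmas~\ref{LemGaV} and~\ref{Lem1/8} to the perturbed potential, and use the bounded-test-function device (Section~\ref{SecNegb}, Lemma~\ref{Lem2V}) to descend back to $V$. The paper uses $\rho(x)=e^{-|x|}$; your $\rho(x)=(1+x_{1}^{2})^{-1}$ works equally well since it too lies in $L^{1}(S)$ and has $\sup_{n}a_{n}(\rho),\sup_{n}b_{n}(\rho)<\infty$.

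Your final paragraph, however, is overcomplicated and slightly misrouted. No truncation, no cutoff $\chi_{R}$, and no limit $\varepsilon\to 0$ are needed. The clean chain is this: since $\rho\in L^{1}(S)$, every \emph{bounded} $f$ satisfies $\int_{S}\rho f^{2}\,dx\leq\|f\|_{\infty}^{2}\int_{S}\rho\,dx<\infty$, so $\mathcal{F}^{b}_{2V,S}=\mathcal{F}^{b}_{2V_{\varepsilon},S}$ outright. On this common domain $\mathcal{E}_{2V_{\varepsilon}}\leq\mathcal{E}_{2V}$ pointwise, whence $\func{Neg}^{b}(2V,S)\leq\func{Neg}^{b}(2V_{\varepsilon},S)\leq\func{Neg}(2V_{\varepsilon},S)$. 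Now apply your first paragraph to $2V_{\varepsilon}$ (not to $V_{\varepsilon}$; you need the factor~$2$ because Lemma~\ref{Lem2V} reads $\func{Neg}^{b}(2V,\Omega)=1\Rightarrow\func{Neg}(V,\Omega)=1$), choosing $c$ and $\varepsilon$ small enough that $\sup_{n}a_{n}(2V_{\varepsilon})$ and $\sup_{n}b_{n}(2V_{\varepsilon})$ meet the threshold. This gives $\func{Neg}(2V_{\varepsilon},S)=1$ for one fixed small $\varepsilon$, hence $\func{Neg}^{b}(2V,S)=1$, and Lemma~\ref{Lem2V} finishes. Your sentence ``$\func{Neg}(V,S)\leq\func{Neg}(V_{\varepsilon},S)$'' is not what you actually prove or need.
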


\begin{proof}
Assume first that $\frac{1}{V}\in L_{loc}^{1}\left( S\right) $. By Lemma \ref%
{Lem1/8} it suffices to show that $\left\Vert \Gamma _{V}\right\Vert \leq 
\frac{1}{8}$. Assuming that the constant $c$ in (\ref{cc}) is small enough,
we obtain from (\ref{normGab}) that indeed $\left\Vert \Gamma
_{V}\right\Vert \leq \frac{1}{8}$ and, hence, $\func{Neg}\left( V,S\right)
=1.$

Consider now a general potential $V$. In this case consider bit larger
potential 
\begin{equation*}
V^{\prime }=V+\varepsilon e^{-\left\vert x\right\vert },
\end{equation*}%
where $\varepsilon >0$. Clearly, $\frac{1}{V^{\prime }}\in L_{loc}^{1}$
while $a_{n}\left( V^{\prime }\right) $ and $b_{n}\left( V^{\prime }\right) $
are still small enough provided $\varepsilon $ is chosen sufficiently small.
Assuming that the constant $c$ in (\ref{cc}) is small enough, we obtain by
the first part of the proof that 
\begin{equation}
\func{Neg}\left( 2V^{\prime },S\right) =1.  \label{2V'}
\end{equation}

We would like to deduce from (\ref{2V'}) that $\func{Neg}\left( V,S\right) =1
$. Since in general $\func{Neg}\left( V,S\right) $ is not monotone with
respect to $V$, we have to use an additional argument. We use the counting
function $\func{Neg}^{b}\left( V,S\right) $ based on bounded test functions
(cf. Section \ref{SecNegb}). 

Observe first that%
\begin{equation}
\func{Neg}^{b}\left( 2V,S\right) \leq \func{Neg}^{b}\left( 2V^{\prime
},S\right) .  \label{VV'}
\end{equation}%
Since $\mathcal{E}_{2V,S}\leq \mathcal{E}_{2V^{\prime },S}$, (\ref{VV'})
will follow from the identity of the spaces $\mathcal{F}_{2V,S}^{b}$ and $%
\mathcal{F}_{2V^{\prime },S}^{b}$, where the latter amounts to%
\begin{equation*}
\int_{\Omega }Vf^{2}dx<\infty \ \Leftrightarrow \ \int_{\Omega }V^{\prime
}f^{2}dx<\infty .
\end{equation*}%
The implication $\Leftarrow $ here is trivial, while the opposite direction $%
\Rightarrow $ follows from%
\begin{equation*}
\int_{\Omega }V^{\prime }f^{2}dx=\int_{\Omega }Vf^{2}dx+\varepsilon
\int_{\Omega }f^{2}e^{-\left\vert x\right\vert }dx
\end{equation*}%
and the finiteness of the last integral, which is true by the boundedness of
the test function $f$. 

Since 
\begin{equation*}
\func{Neg}^{b}\left( 2V^{\prime },S\right) \leq \func{Neg}\left( 2V^{\prime
},S\right) ,
\end{equation*}%
combining this with (\ref{2V'}) and (\ref{VV'}) we obtain   
\begin{equation*}
\func{Neg}^{b}\left( 2V,S\right) =1.
\end{equation*}%
Finally, we conclude by Lemma  \ref{Lem2V} that $\func{Neg}\left( V,S\right)
=1.$
\end{proof}

\subsection{Extension of functions from a rectangle to a strip}

\label{SecRec}For all $\alpha \in \lbrack -\infty ,+\infty ),$ $\beta \in
(-\infty ,+\infty ]$ such that $\alpha <\beta $, denote by $P_{\alpha ,\beta
}$ the rectangle%
\begin{equation*}
P_{\alpha ,\beta }=\left\{ \left( x_{1},x_{2}\right) \in \mathbb{R}%
^{2}:\,\alpha <x_{1}<\beta ,\ \ 0<x_{2}<\pi \right\} .
\end{equation*}

\begin{lemma}
\label{LemPab}For any potential $V$ in a rectangle $P_{\alpha ,\beta }$ with 
$\beta -\alpha \geq 1$, we have%
\begin{equation}
\func{Neg}\left( V,P_{\alpha ,\beta }\right) \leq \func{Neg}\left(
17V,S\right) ,  \label{5V}
\end{equation}%
assuming that $V$ is extended to $S$ by setting $V=0$ outside $P_{\alpha
,\beta }$.
\end{lemma}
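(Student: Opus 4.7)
The plan is to construct a linear injective extension operator $\mathcal{L}\colon \mathcal{F}_{V,P_{\alpha,\beta}} \to \mathcal{F}_{V,S}$ with $(\mathcal{L}f)|_{P_{\alpha,\beta}} = f$ and a universal Dirichlet bound $\int_S |\nabla \mathcal{L}f|^2 \leq 17 \int_{P_{\alpha,\beta}} |\nabla f|^2$. Since $V$ vanishes outside $P_{\alpha,\beta}$, the restriction property forces $\int_S V(\mathcal{L}f)^2 = \int_{P_{\alpha,\beta}} Vf^2$, so Lemma \ref{LemL}$(b)$ applied with $c_1=17$ and $c_2=1$ yields (\ref{5V}).

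For the construction, the cases $P_{\alpha,\beta}=S$ and those with a single infinite endpoint are handled separately: the former is trivial, and the latter by a single axial reflection across the unique finite boundary (which doubles the Dirichlet integral, as in Example \ref{ExL}). From now on assume $\alpha,\beta$ are finite, so $L:=\beta-\alpha\geq 1$. By Lemma \ref{LemL2loc}, any $f\in \mathcal{F}_{V,P_{\alpha,\beta}}$ belongs to $L^2(P_{\alpha,\beta})\cap W^{1,2}(P_{\alpha,\beta})$. Write $f=\bar f + g$, where $\bar f=(\pi L)^{-1}\int_{P_{\alpha,\beta}} f\, dx$ is the mean and $g$ has zero mean. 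Extend $g$ by successive even reflections across $x_1=\alpha$ and $x_1=\beta$ to $g^R\in W^{1,2}(P')$ on the tripled rectangle $P':=P_{2\alpha-\beta,\,2\beta-\alpha}$; standard reflection gives $\int_{P'}|\nabla g^R|^2 = 3\int_{P_{\alpha,\beta}}|\nabla g|^2$ and $\int_{P'}(g^R)^2 = 3\int_{P_{\alpha,\beta}} g^2$. Pick a Lipschitz cutoff $\varphi(x_1)$ with $\varphi=1$ on $[\alpha,\beta]$, $\varphi=0$ outside $(2\alpha-\beta,\,2\beta-\alpha)$, linear on the two transition strips so that $|\nabla\varphi|\leq 1/L$. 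Finally set $\mathcal{L}f := \bar f + \varphi g^R$, extended by the constant $\bar f$ beyond the support of $\varphi g^R$; linearity, injectivity (since $\mathcal{L}f|_{P_{\alpha,\beta}}=f$), and membership in $\mathcal{F}_{V,S}$ are all immediate.

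The quantitative heart of the argument is the gradient estimate. Using $|a+b|^2\leq 2|a|^2+2|b|^2$,
\begin{equation*}
\int_S |\nabla \mathcal{L}f|^2 = \int_{P'}|\nabla(\varphi g^R)|^2 \leq 2\int_{P'}\varphi^2|\nabla g^R|^2 + 2\int_{P'}(g^R)^2|\nabla\varphi|^2 \leq 6\int_{P_{\alpha,\beta}}|\nabla g|^2 + \frac{4}{L^2}\int_{P_{\alpha,\beta}} g^2.
\end{equation*}
The Neumann--Poincar\'e inequality on the rectangle $P_{\alpha,\beta}$ of dimensions $L\times\pi$ (smallest nonzero eigenvalue $\min(\pi^2/L^2,1)$) gives $\int_{P_{\alpha,\beta}} g^2\leq \max(L^2/\pi^2,1)\int_{P_{\alpha,\beta}}|\nabla g|^2$, and combined with $L\geq 1$ this yields $\frac{4}{L^2}\int_{P_{\alpha,\beta}} g^2 \leq 4\int_{P_{\alpha,\beta}}|\nabla g|^2$. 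Hence $\int_S|\nabla \mathcal{L}f|^2 \leq 10\int_{P_{\alpha,\beta}}|\nabla f|^2$, comfortably inside the budget of $17$.

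The main obstacle is conceptual rather than computational: the naive extension $\tilde f = \varphi f^R$ fails on constant functions, since then $\int(f^R)^2|\nabla\varphi|^2$ is not controlled by $\int|\nabla f|^2=0$. The mean-subtraction rescues matters because the constant part $\bar f$ is extended to $S$ with no gradient cost, while the zero-mean part $g$ benefits from the Neumann--Poincar\'e inequality, whose $L^2$-scaling exactly cancels the $1/L^2$ generated by the cutoff slope. The hypothesis $L\geq 1$ is essential for keeping this Poincar\'e/cutoff tradeoff uniform across all rectangle shapes; without it, a very thin rectangle would overwhelm the cutoff term.
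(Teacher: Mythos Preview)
Your proof is correct and takes a genuinely different route from the paper's. The paper extends $u$ from $P_{\alpha,\beta}$ to $S$ purely by geometric means: it first reflects $u$ four times across the vertical side $x_1=\alpha$ to fill a rectangle $P'$ of length $4(\beta-\alpha)>\pi$, then uses a conformal inversion (as in Example~\ref{ExC}) to push $u$ from a sector of a disk out to the rest of the half-strip $\{x_1<\alpha\}$; the conformal invariance of the Dirichlet integral gives $\int_{\{x_1<\alpha\}}|\nabla u|^2\le 8\int_P|\nabla u|^2$, and symmetrically on the right, for a total factor $1+8+8=17$. Because reflections and inversions send constants to constants, the paper never needs to isolate the mean.

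Your approach instead subtracts the mean, reflects only once on each side, multiplies by a Lipschitz cutoff, and controls the resulting $L^2$ error via the Neumann--Poincar\'e inequality on $P_{\alpha,\beta}$; the hypothesis $\beta-\alpha\ge 1$ is exactly what makes the Poincar\'e constant and the cutoff slope balance uniformly. This is more elementary (no conformal maps), yields a sharper constant ($10$ rather than $17$), and would generalize more readily to settings without a rich supply of conformal maps. The paper's method, by contrast, is in keeping with the conformal techniques used throughout and avoids any appeal to spectral information about the rectangle.
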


\begin{proof}
By Lemma \ref{LemL} it suffices to show that any function $u\in \mathcal{F}%
_{V,P}$ can be extended to a function $u\in \mathcal{F}_{V,S}$ so that%
\begin{equation}
\int_{S}\left\vert \nabla u\right\vert ^{2}dx\leq 17\int_{P}\left\vert
\nabla u\right\vert ^{2}dx.  \label{u5}
\end{equation}%
Assume first that both $\alpha ,\beta $ are finite. Attach to $P$ from each
side one rectangle, say $P^{\prime }$ from the left and $P^{\prime \prime }$
from the right, each having the length $4\left( \beta -\alpha \right) $ (to
ensure that the latter is $>\pi $). Extend function $u$ to $P^{\prime }$ by
applying four times symmetries in the vertical sides (cf. Example \ref{ExL}%
). Then we have%
\begin{equation*}
\int_{P^{\prime }}\left\vert \nabla u\right\vert ^{2}dx=4\int_{P}\left\vert
\nabla u\right\vert ^{2}dx.
\end{equation*}%
\FRAME{ftbhFU}{5.7009in}{1.3629in}{0pt}{\Qcb{Extension of function $u$ from $%
P$ to $S$.}}{\Qlb{pic2}}{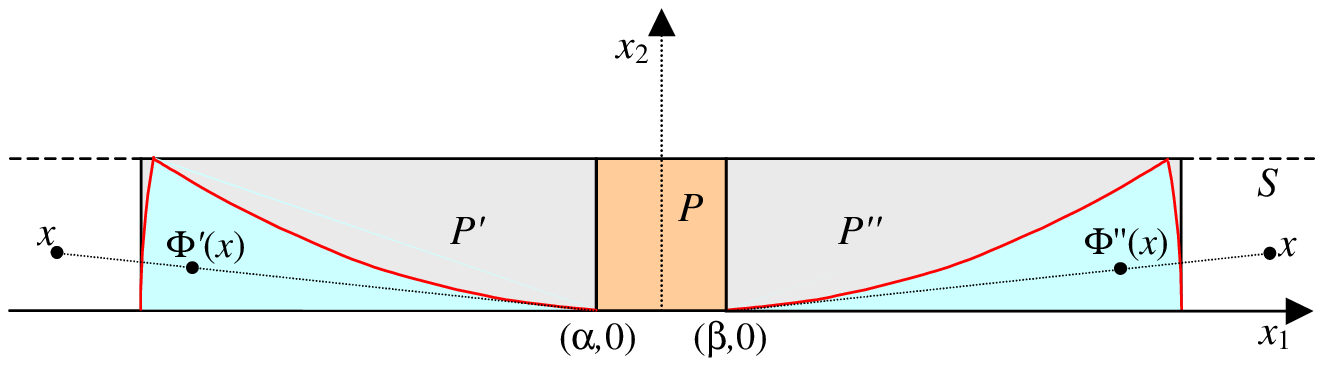}{\special{language "Scientific Word";type
"GRAPHIC";maintain-aspect-ratio TRUE;display "USEDEF";valid_file "F";width
5.7009in;height 1.3629in;depth 0pt;original-width 6.9574in;original-height
1.6769in;cropleft "0";croptop "1";cropright "1";cropbottom "0";filename
'pic2.eps';file-properties "XNPEU";}}

Then slightly reduce $P^{\prime }$ by taking its intersections with the disk
of radius $\beta -\alpha $ centered at $\left( \alpha ,0\right) $ (cf. Fig. %
\ref{pic2}). Now we extend $u$ from $P^{\prime }$ to the left by using the
inversion $\Phi ^{\prime }$ at the point $\left( \alpha ,0\right) $ in the
circle of radius $\beta -\alpha $ centered at $\left( \alpha ,0\right) $
(cf. Example \ref{ExC}). By the conformal invariance of the Dirichlet
integral, we have%
\begin{equation*}
\int_{S\cap \left\{ x_{1}<\alpha \right\} }\left\vert \nabla u\right\vert
^{2}\leq 8\int_{P}\left\vert \nabla u\right\vert ^{2}dx.
\end{equation*}%
Extending $u$ in the same way to the right of $P$, we obtain (\ref{u5}). The
case when one of the endpoints $\alpha ,\beta $ is at infinity is treated
similarly.
\end{proof}

\subsection{Sparse potentials}

\label{SecSparse}

\begin{definition}
\RM We say that a potential $V$ in $S$ is \emph{sparse} if%
\begin{equation}
\sup_{n}b_{n}\left( V\right) <c_{0},  \label{sparse}
\end{equation}%
where $c_{0}$ is a small enough positive constant, depending only on $p.$ We
say that a potential $V$ is sparse in a domain $\Omega \subset S$ if its
trivial extension to $S$ is sparse.
\end{definition}

Let us choose $c_{0}$ smaller that the constant $c$ from (\ref{cc}). It
follows from Proposition \ref{Pab} that, for a sparse potential,%
\begin{equation*}
\sup_{n}a_{n}\left( V\right) \leq c\ \Rightarrow \func{Neg}\left( V,S\right)
=1.
\end{equation*}%
Consider some estimates for $\func{Neg}\left( V,\Omega \right) $ for sparse
potentials.

\begin{corollary}
\label{CorPr}Let $V$ be a sparse potential on a rectangle $P_{\alpha ,\beta
} $ with $\beta -\alpha \geq 1$. Then%
\begin{equation}
\left( \beta -\alpha \right) \int_{P_{\alpha ,\beta }}V\left( x\right)
dx\leq c\Rightarrow \func{Neg}\left( V,P_{\alpha ,\beta }\right) =1,
\label{PabV}
\end{equation}%
where $c$ is a positive constant depending only on $p$.
\end{corollary}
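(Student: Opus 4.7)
The plan is to recenter $P_{\alpha,\beta}$ at the origin via translation, extend $V$ by zero to the strip $S$, and apply Proposition \ref{Pab} through Lemma \ref{LemPab}. The hypothesis $(\beta-\alpha)\int_{P}V\,dx\leq c$ is tailored precisely to control $\sup_{n}a_{n}$ of the extended potential once the rectangle is centered, while sparseness controls $\sup_{n}b_{n}$.

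First I would apply the horizontal translation $\tau(x)=x-\bigl(\tfrac{\alpha+\beta}{2},0\bigr)$, which sends $P_{\alpha,\beta}$ onto $P_{-\ell/2,\ell/2}$ with $\ell=\beta-\alpha\geq 1$. Since $\tau$ is an isometry (in particular conformal with $M_{\tau}=1$ and $|J_{\tau}|=1$), Remark \ref{Remcon} gives $\func{Neg}(V,P_{\alpha,\beta})=\func{Neg}(\tilde V,P_{-\ell/2,\ell/2})$, where $\tilde V(y)=V\bigl(y+(\tfrac{\alpha+\beta}{2},0)\bigr)$. After extending $\tilde V$ trivially to $S$, any integer-indexed unit slice $\{n<y_{1}<n+1\}$ pulls back to a shifted unit slice that is covered by at most two original integer slices, so $b_{n}(\tilde V)\leq 2^{1/p}\sup_{k}b_{k}(V)\leq 2^{1/p}c_{0}$. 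Moreover $\int_{P_{-\ell/2,\ell/2}}\tilde V\,dy=\int_{P_{\alpha,\beta}}V\,dx$.

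Next, by Lemma \ref{LemPab}, $\func{Neg}(\tilde V,P_{-\ell/2,\ell/2})\leq\func{Neg}(17\tilde V,S)$, and I verify the hypotheses of Proposition \ref{Pab} for $17\tilde V$. For the $b_{n}$-bound: $b_{n}(17\tilde V)=17\,b_{n}(\tilde V)\leq 17\cdot 2^{1/p}c_{0}$, which is at most the constant $c_{P}$ from Proposition \ref{Pab} provided the sparseness constant $c_{0}$ (which depends only on $p$) is chosen small enough. For the $a_{n}$-bound: since $\tilde V$ is supported in $P_{-\ell/2,\ell/2}$ where $|y_{1}|\leq\ell/2$, and using $1+\ell/2\leq 2\ell$ for $\ell\geq 1$,
$$a_{n}(17\tilde V)\leq 17(1+\ell/2)\int_{P_{-\ell/2,\ell/2}}\tilde V\,dy\leq 34\,\ell\int_{P_{\alpha,\beta}}V\,dx\leq 34\,c,$$
which is $\leq c_{P}$ by choosing the constant $c$ in the hypothesis to satisfy $c\leq c_{P}/34$. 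Proposition \ref{Pab} then gives $\func{Neg}(17\tilde V,S)=1$, hence $\func{Neg}(V,P_{\alpha,\beta})\leq 1$; combined with the trivial lower bound $\func{Neg}\geq 1$ established after (\ref{negdef}), this yields equality.

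The argument presents no substantial obstacle beyond constant-bookkeeping — Proposition \ref{Pab} and Lemma \ref{LemPab} do all the analytic work. The only conceptual point is the translation reduction: without recentering, the naive bound $\sup_{x\in P}|x_{1}|\leq\max(|\alpha|,|\beta|)$ is useless when $P$ sits far from the origin, whereas after recentering it becomes $\ell/2$, which is exactly the factor the weight $(\beta-\alpha)$ in the hypothesis is designed to absorb.
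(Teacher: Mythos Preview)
Your argument is correct and follows the same route as the paper: translate, extend trivially to $S$, bound $\sup_n a_n$ by the quantity $(\beta-\alpha)\int_P V$, invoke Proposition~\ref{Pab}, and conclude via Lemma~\ref{LemPab}. The only cosmetic difference is that the paper translates so that $\alpha=0$ and then uses the dyadic bound $a_n(V)\leq 2^{n+1}\int_{S_n}V\leq 4\beta\int_{P_{0,\beta}}V$, whereas you center the rectangle and bound $(1+|y_1|)\leq 2\ell$ directly on the support; both yield the same conclusion with comparable constants.
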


\begin{proof}
By shifting $P_{\alpha .\beta }$ and $V$ along the axis $x_{1}$, we can
assume that $\alpha =0,$ so that $\beta \geq 1$. Let $m$ be a non-negative
integer such that $2^{m-1}<\beta \leq 2^{m}$ (cf. Fig. \ref{pic9}).\FRAME{%
ftbphFU}{5.7009in}{1.2868in}{0pt}{\Qcb{Rectangle $P_{0,\protect\beta }$ is
covered by the sequence $S_{n},0\leq n\leq m$}}{\Qlb{pic9}}{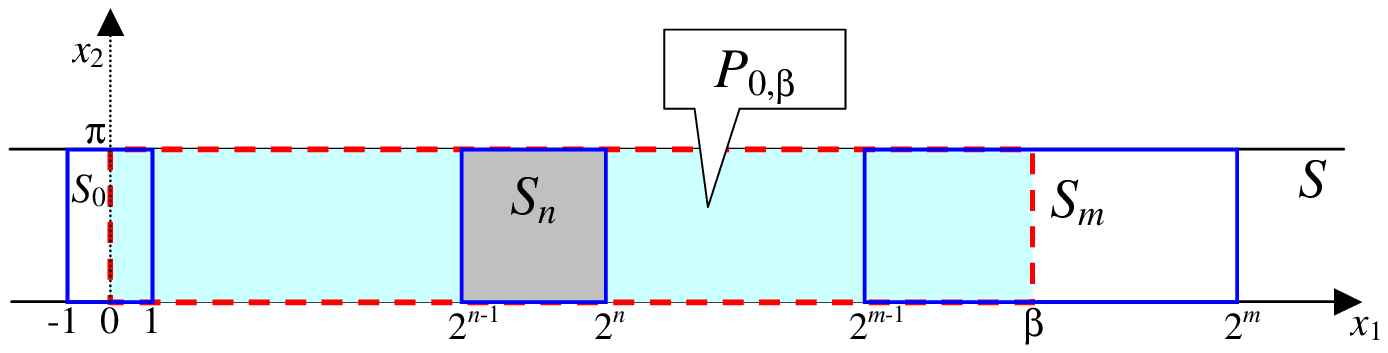}{%
\special{language "Scientific Word";type "GRAPHIC";maintain-aspect-ratio
TRUE;display "USEDEF";valid_file "F";width 5.7009in;height 1.2868in;depth
0pt;original-width 6.3027in;original-height 1.3984in;cropleft "0";croptop
"1";cropright "1";cropbottom "0";filename 'pic9.eps';file-properties
"XNPEU";}}

Then $a_{n}\left( V\right) =0$ for $n<0$ and for $n\geq m+1$. For $0\leq
n\leq m$ we have by (\ref{an2n})%
\begin{equation}
a_{n}\left( V\right) \leq 2^{n+1}\int_{S_{n}}V\left( x\right) dx\leq
2^{m+1}\int_{P_{0,\beta }}V\left( x\right) dx\leq 4\beta \int_{P_{0,\beta
}}V\left( x\right) dx.  \label{anV<}
\end{equation}%
The hypotheses (\ref{PabV}) with small enough $c$ and (\ref{anV<}) imply
that $a_{n}\left( 17V\right) $ are sufficiently small for all $n\in \mathbb{Z%
}$. By Proposition \ref{Pab} we obtain $\func{Neg}\left( 17V,S\right) =1,$
and by Lemma \ref{LemPab} $\func{Neg}\left( V,P_{0,\beta }\right) =1.$
\end{proof}

The next statement is the main technical lemma about sparse potentials.

\begin{lemma}
\label{LemLS}\label{PBS}Let $V$ be a sparse potential in a rectangle $%
P_{\alpha ,\beta }$ with $\beta -\alpha \geq 1$. Then%
\begin{equation}
\func{Neg}\left( V,P_{\alpha ,\beta }\right) \leq 1+C\left( \left( \beta
-\alpha \right) \int_{P_{\alpha ,\beta }}V\left( x\right) dx\right) ^{1/2},
\label{x1}
\end{equation}%
where the constant $C$ depends only on $p.$ In particular, for any $n\in 
\mathbb{Z},$%
\begin{equation}
\func{Neg}\left( V,S_{n}\right) \leq 1+C\sqrt{a_{n}\left( V\right) }.
\label{ran}
\end{equation}
\end{lemma}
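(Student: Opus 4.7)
My plan is to partition $[\alpha,\beta]$ into sub-intervals so that the corresponding rectangles $P_k^{*}$ each satisfy $\func{Neg}(V,P_k^{*})=1$, to bound the number $N$ of pieces by $1+C\bigl((\beta-\alpha)\int_{P}V\bigr)^{1/2}$ via Cauchy--Schwarz, and then to conclude by the subadditivity Lemma~\ref{LemSub}. The companion estimate (\ref{ran}) for $S_n$ will follow directly from (\ref{x1}) applied to $P_{\alpha,\beta}=S_n$, since on $S_n$ we have $1+|x_1|\simeq 2^{|n|}$ and hence $(\beta-\alpha)\int_{S_n}V\,dx\simeq a_n(V)$ by (\ref{anV}).

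The first step is to fix the sparsity threshold $c_0$ small enough to guarantee two properties: (A) by H\"older's inequality together with sparsity, $\int_{P_{s,s+1}}V\,dx\le c/4$ on every unit-length sub-rectangle of $S$, where $c$ is the constant from Corollary~\ref{CorPr}; and (B) for every rectangle $R\subset S$ of length in $[1,3]$, the bilipschitz version of Lemma~\ref{LemMain} (see the remark following it) yields $\func{Neg}(V,R)\le 1+Cc_{0}<2$, hence $=1$ by integer-valuedness. The case $\beta-\alpha\le 2$ is then handled immediately by~(B). From now on I assume $\beta-\alpha>2$.

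For the main case, I construct the partition greedily. Setting $x_0=\alpha$ and $G_k(t):=t\int_{P_{x_k,x_k+t}}V\,dx$ (continuous, nondecreasing, with $G_k(1)<c/2$ by (A)), I take $x_{k+1}=\beta$ if $G_k(\beta-x_k)\le c/2$, and otherwise set $x_{k+1}=x_k+t_k^{*}$, where $t_k^{*}>1$ is the unique solution of $G_k(t_k^{*})=c/2$. Each non-terminal piece has $l_k:=x_{k+1}-x_k\ge 1$ and $l_kI_k=c/2$ with $I_k:=\int_{P_{k}^{*}}V\,dx$, so Corollary~\ref{CorPr} gives $\func{Neg}(V,P_k^{*})=1$. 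The terminal piece has $l_{N-1}I_{N-1}\le c/2$ but possibly $l_{N-1}<1$, which is the only situation in which Corollary~\ref{CorPr} is not directly applicable; this is the main technical obstacle.

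To handle a short terminal piece I split into two subcases. If $l_{N-2}\ge 2$, I move the last interior split point from $x_{N-1}$ to $\beta-1$: the shortened second-to-last piece $[x_{N-2},\beta-1]\times(0,\pi)$ lies inside the old $P_{N-2}^{*}$ (since $l_{N-1}<1$) and retains $l\cdot I\le l_{N-2}I_{N-2}=c/2$, while the new last piece $[\beta-1,\beta]\times(0,\pi)$ has length $1$ with $\int V\le c/4$ by (A), so Corollary~\ref{CorPr} applies to both. If instead $l_{N-2}<2$, I merge the last two pieces into a single rectangle of length in $(1,3)$, which is covered by (B). In either subcase, one obtains a partition with at most $N$ rectangles, each satisfying $\func{Neg}(V,\cdot)=1$. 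The final step is Cauchy--Schwarz: summing over the non-terminal pieces,
\[
(N-1)\sqrt{c/2}=\sum_{k=0}^{N-2}\sqrt{l_kI_k}\le\Big(\sum_{k}l_k\Big)^{1/2}\Big(\sum_{k}I_k\Big)^{1/2}\le\Big((\beta-\alpha)\int_{P_{\alpha,\beta}}V\,dx\Big)^{1/2},
\]
giving $N\le 1+C\bigl((\beta-\alpha)\int V\bigr)^{1/2}$, which combined with Lemma~\ref{LemSub} yields (\ref{x1}).
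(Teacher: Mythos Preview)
Your proof is correct and follows essentially the same strategy as the paper: a greedy partition of $P_{\alpha,\beta}$ into rectangles satisfying $l_kI_k\approx c$, Corollary~\ref{CorPr} to force $\func{Neg}=1$ on each piece, and Cauchy--Schwarz to bound the number of pieces. The only noteworthy difference is in the treatment of the terminal piece: the paper avoids the short-endpiece issue altogether by first invoking Lemma~\ref{LemPab} to pass to the full strip $S$ (where the last rectangle may overshoot $\beta$ by setting $r_N=\beta+1$, and a single H\"older--sparsity contradiction shows all $l_k\ge 1$), whereas you stay inside $P_{\alpha,\beta}$ and handle the possible short terminal rectangle by your two explicit subcases (move the last cut, or merge and appeal to the bilipschitz Lemma~\ref{LemMain}). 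Both routes are valid; the paper's is slightly cleaner, yours is slightly more self-contained.
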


\begin{proof}
Without loss of generality set $\alpha =0.$ Set also%
\begin{equation*}
J=\int_{P_{0,\beta }}V\left( x\right) dx
\end{equation*}%
and recall that, by Corollary \ref{CorPr}, if $\beta J\leq c$ for
sufficiently small $c$ then $\func{Neg}\left( V,P_{0,\beta }\right) =1.$
Hence, in this case (\ref{x1}) is trivially satisfied, and we assume in the
sequel that $\beta J>c$.

Due to Lemma \ref{LemPab}, it suffices to prove the estimate%
\begin{equation*}
\func{Neg}\left( V,S\right) \leq C\left( \beta J\right) ^{1/2},
\end{equation*}%
assuming that $V$ vanishes outside $P_{0,\beta }$. Consider a sequence of
reals $\left\{ r_{k}\right\} _{k=0}^{N}$ such that%
\begin{equation*}
0=r_{0}<r_{1}<...<r_{N-1}<\beta \leq r_{N}
\end{equation*}%
and the corresponding sequence of rectangles%
\begin{equation*}
R_{k}:=P_{r_{k-1},r_{k}}=\left\{ \left( x_{1},x_{2}\right)
:r_{k-1}<x_{1}<r_{k},\ \ 0<x_{2}<\pi \right\}
\end{equation*}%
where $k=1,...,N$, that covers $P_{0,\beta }$ (see Fig. \ref{pic8}). \FRAME{%
ftbphFU}{5.7009in}{1.2868in}{0pt}{\Qcb{The sequence $\left\{ R_{k}\right\}
_{k=1}^{N}$ of rectangles covering $P_{0,\protect\beta }$}}{\Qlb{pic8}}{%
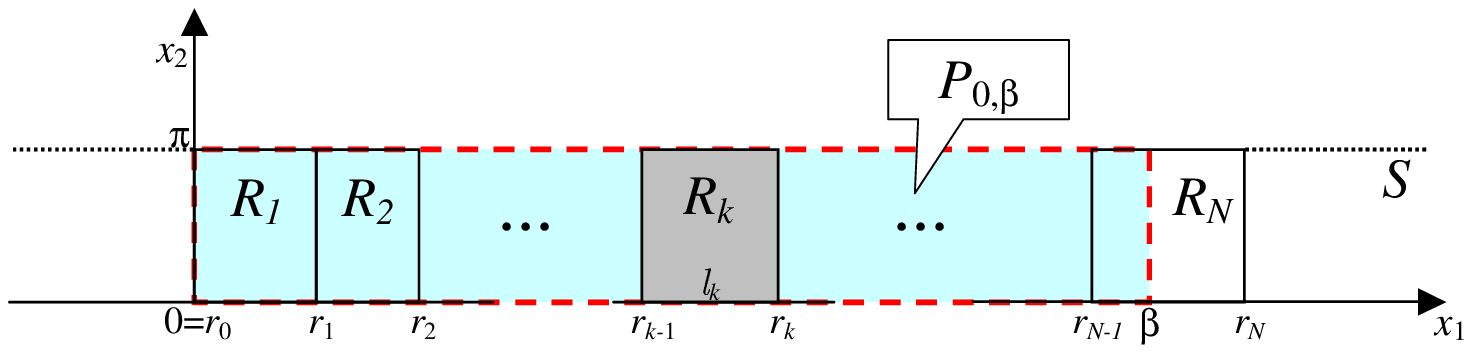}{\special{language "Scientific Word";type
"GRAPHIC";maintain-aspect-ratio TRUE;display "USEDEF";valid_file "F";width
5.7009in;height 1.2868in;depth 0pt;original-width 6.3027in;original-height
1.3984in;cropleft "0";croptop "1";cropright "1";cropbottom "0";filename
'pic8.eps';file-properties "XNPEU";}}

Denote $l_{k}=r_{k}-r_{k-1}$ and 
\begin{equation*}
J_{k}=\int_{R_{k}}V\left( x\right) dx.
\end{equation*}%
By Corollary \ref{CorPr}, if 
\begin{equation}
l_{k}\geq 1\ \text{and\ \ }l_{k}J_{k}\leq c  \label{LJ}
\end{equation}%
then 
\begin{equation*}
\func{Neg}\left( V,R_{k}\right) =1.
\end{equation*}

It is easy to construct the sequence $\left\{ r_{k}\right\} $ inductively so
that both conditions in (\ref{LJ}) are satisfied for all $k=1,...,N$ (where $%
N$ is yet to be determined). If $r_{k-1}$ is already defined and is smaller
than $\beta $ then choose $r_{k}>r_{k-1}$ to satisfy the identity%
\begin{equation}
l_{k}J_{k}=c.  \label{rV}
\end{equation}%
If such $r_{k}$ does not exist then set $r_{k}=\beta +1$; in this case, we
have%
\begin{equation*}
l_{k}J_{k}<c.
\end{equation*}%
Let us show that in the both cases $l_{k}=r_{k}-r_{k-1}\geq 1$. Indeed, if $%
l_{k}<1$ then $r_{k}<\beta +1$ so that (\ref{rV}) is satisfied. Using the H%
\"{o}lder inequality, (\ref{rV}) and $l_{k}<1$, we obtain%
\begin{equation}
\left( \int_{R_{k}}V^{p}dx\right) ^{1/p}\geq \frac{1}{\left( \pi
l_{k}\right) ^{1/p^{\prime }}}\int_{R_{k}}Vdx=\frac{c}{\left( \pi
l_{k}\right) ^{1/p^{\prime }}l_{k}}\geq \frac{c}{\pi ^{1/p^{\prime }}}.
\label{c/2}
\end{equation}%
However, if the constant $c_{0}$ in the definition (\ref{sparse}) of a
sparse potential is small enough, then we obtain that (\ref{c/2}) and (\ref%
{sparse}) contradict each other, which proves that $l_{k}\geq 1$.

As soon as we reach $r_{k}\geq \beta $ we stop the process and set $N=k.$
Since always $l_{k}\geq 1$, the process will indeed stop in a finite number
of steps.

We obtain a partition of $S$ into $N$ rectangles $R_{1},...,R_{N}$ and two
half-strips: $S\cap \left\{ x_{1}<0\right\} $ and $S\cap \left\{
x_{1}>r_{N}\right\} $, and in the both half-strips we have $V\equiv 0.$ In
each $R_{k}$ we have $\func{Neg}\left( V,R_{k}\right) =1$ whence it follows
that%
\begin{equation*}
\func{Neg}\left( V,S\right) \leq 2+\sum_{k=1}^{N}\func{Neg}\left(
V,R_{k}\right) =N+2.
\end{equation*}%
Let us estimate $N$ from above. In each $R_{k}$ with $k\leq N-1$ we have by (%
\ref{rV}) $\frac{1}{J_{k}}=\frac{1}{c}l_{k}.$ Therefore, we have%
\begin{equation*}
N-1=\sum_{k=1}^{N-1}\frac{1}{\sqrt{J_{k}}}\sqrt{J_{k}}\leq \left( \frac{1}{c}%
\sum_{k=1}^{N-1}l_{k}\right) ^{1/2}\left( \sum_{k=1}^{N-1}J_{k}\right)
^{1/2}\leq \left( \frac{1}{c}\beta \right) ^{1/2}J^{1/2}.
\end{equation*}%
Using also $3\leq 3\left( \frac{1}{c}\beta J\right) ^{1/2}$, we obtain $%
N+2\leq 4\left( c^{-1}\beta J\right) ^{1/2}$, which finishes the proof of (%
\ref{x1}).

The estimate (\ref{ran}) follows trivially from (\ref{x1}). Indeed, $S_{n}$
is a rectangle $P_{\alpha ,\beta }$ with the length $1\leq \beta -\alpha
\leq 2^{\left\vert n\right\vert +1}.$ Using (\ref{x1}) and (\ref{an2n}), we
obtain%
\begin{equation*}
\func{Neg}\left( V,S_{n}\right) \leq 1+C\left( 2^{n+1}\int_{S_{n}}V\left(
x\right) dx\right) ^{1/2}\leq 1+C^{\prime }\sqrt{a_{n}\left( V\right) }
\end{equation*}%
with $C^{\prime }=2C$, which proves (\ref{ran}).
\end{proof}

\begin{proposition}
\label{Propran}For any sparse potential $V$ in the strip $S$,%
\begin{equation}
\func{Neg}\left( V,S\right) \leq 1+C\sum_{\left\{ n:a_{n}\left( V\right)
>c\right\} }\sqrt{a_{n}\left( V\right) },  \label{sumran}
\end{equation}%
for some constant $C,c>0$ depending only on $p$.
\end{proposition}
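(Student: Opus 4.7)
The plan is to partition $S$ into the \emph{bad} cells $S_n$ with $a_n(V)>c$ and the maximal \emph{good} rectangles on which $a_n(V)\le c$ everywhere, estimate the two types separately, and sum via subadditivity (Lemma~\ref{LemSub}). First I would fix the threshold $c$ and also tighten the sparseness constant $c_0$ so that both are at most $c_\ast/17$, where $c_\ast$ denotes the constant provided by Proposition~\ref{Pab}. With this choice, if the set $\mathcal{N}:=\{n\in\mathbb{Z}:a_n(V)>c\}$ is empty then $\sup_n a_n(V)\le c_\ast$ and $\sup_n b_n(V)<c_\ast$, so Proposition~\ref{Pab} directly gives $\func{Neg}(V,S)=1$; hence I may assume $\mathcal{N}\ne\emptyset$, and since each summand on the right-hand side of (\ref{sumran}) is at least $\sqrt{c}$, I may also assume $\mathcal{N}$ is finite, say $\mathcal{N}=\{n_1<\dots<n_K\}$.

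Next I would form the partition of $S$ into the $K$ bad cells $S_{n_1},\dots,S_{n_K}$ and at most $K+1$ maximal gap rectangles $G_0,\dots,G_K$, with $G_0,G_K$ possibly half-infinite. For each bad cell, Lemma~\ref{LemLS} (estimate (\ref{ran})) gives $\func{Neg}(V,S_{n_k})\le 1+C\sqrt{a_{n_k}(V)}$. For each gap $G=P_{\alpha,\beta}$ (automatically of length $\ge 1$ because $|S_n|\ge 1$ for every $n$), I would extend $V$ by zero outside $G$ and invoke Lemma~\ref{LemPab} to get $\func{Neg}(V,G)\le\func{Neg}(17V_{\mathrm{ext}},S)$. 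By the choice of thresholds, $a_n(17V_{\mathrm{ext}})\le 17c\le c_\ast$ for every $n$ (it vanishes outside the run of $G$) while $b_n(17V_{\mathrm{ext}})\le 17b_n(V)<17c_0\le c_\ast$ for every $n$, so Proposition~\ref{Pab} yields $\func{Neg}(17V_{\mathrm{ext}},S)=1$ and consequently $\func{Neg}(V,G)\le 1$.

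Summing via Lemma~\ref{LemSub} then produces
\[
\func{Neg}(V,S)\le\sum_{k=1}^K\bigl(1+C\sqrt{a_{n_k}(V)}\bigr)+(K+1)=2K+1+C\sum_{n\in\mathcal{N}}\sqrt{a_n(V)},
\]
after which the inequality $K\le c^{-1/2}\sum_{n\in\mathcal{N}}\sqrt{a_n(V)}$, valid because $a_n(V)>c$ on $\mathcal{N}$, absorbs the $2K$ term into the sum and delivers (\ref{sumran}) with $C'=C+2c^{-1/2}$. The main obstacle I anticipate is constant bookkeeping: the sparseness constant $c_0$, the bad-threshold $c$ in the proposition, and the $17$-factor from Lemma~\ref{LemPab} must be chosen compatibly so that the extension argument for each good rectangle really fits the hypothesis of Proposition~\ref{Pab}. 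A minor secondary point is that Lemma~\ref{LemPab} has to be invoked for the possibly half-infinite end rectangles $G_0,G_K$, but this case is explicitly covered by the concluding remark in the proof of that lemma.
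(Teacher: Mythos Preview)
Your proposal is correct and follows essentially the same route as the paper: partition $S$ into the ``bad'' cells $S_{n_i}$ with $a_{n_i}(V)>c$ and the gap rectangles between them, apply Lemma~\ref{LemPab} together with Proposition~\ref{Pab} to get $\func{Neg}(V,G)=1$ on each gap, apply (\ref{ran}) on each bad cell, and sum via Lemma~\ref{LemSub}. The only cosmetic differences are that the paper absorbs the ``$+1$'' from (\ref{ran}) immediately (writing $\func{Neg}(V,S_{n_i})\leq C\sqrt{a_{n_i}(V)}$ since $a_{n_i}>c$) rather than carrying $2K$ to the end, and it leaves the constant bookkeeping implicit (``assuming $c$ is small enough'') where you spell out the compatibility $17c,\,17c_0\leq c_\ast$.
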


\begin{proof}
Let us enumerate in the increasing order those values $n$ where $a_{n}\left(
V\right) >c.$ So, we obtain an increasing sequence $\left\{ n_{i}\right\} $,
finite or infinite, such that $a_{n_{i}}\left( V\right) >c$ for any index $i$%
. The difference $S\setminus \tbigcup_{i}S_{n_{i}}$ can be partitions into a
sequence $\left\{ T_{j}\right\} $ of rectangles, where each rectangle $T_{j}$
either fills the gap in $S$ between successive rectangles $%
S_{n_{i}},S_{n_{i+1}}$ as on Fig. \ref{pic1} or $T_{j}$ may be a half-strip
that fills the gap between $S_{n_{i}}$ and $+\infty $ or $-\infty ,$ when $%
n_{i}$ is the maximal, respectively minimal, value in the sequence $\left\{
n_{i}\right\} $.\FRAME{ftbphFU}{5.7389in}{0.9729in}{0pt}{\Qcb{Partitioning
of the strip $S$ into rectangles $S_{n_{i}}$ and $T_{j}$}}{\Qlb{pic1}}{%
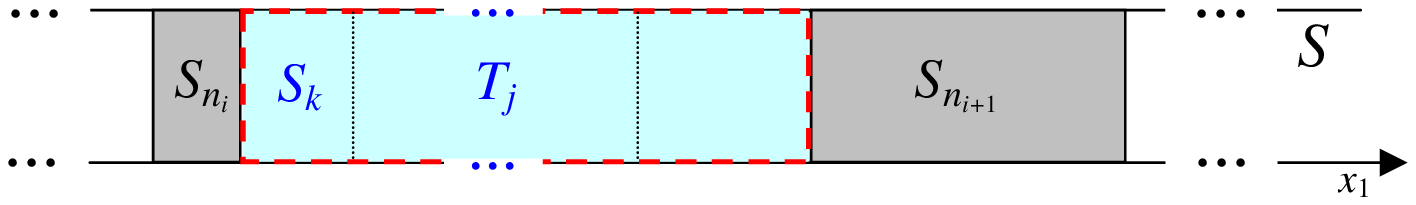}{\special{language "Scientific Word";type
"GRAPHIC";maintain-aspect-ratio TRUE;display "USEDEF";valid_file "F";width
5.7389in;height 0.9729in;depth 0pt;original-width 6.5959in;original-height
1.1632in;cropleft "0";croptop "1";cropright "1";cropbottom "0";filename
'pic1.eps';file-properties "XNPEU";}}

By construction, each $T_{j}$ is a union of some rectangles $S_{k}$ with $%
a_{k}\left( V\right) \leq c.$ Consider the potential $V_{j}=V\mathbf{1}%
_{T_{j}}$. By Lemma \ref{LemPab} we have 
\begin{equation*}
\func{Neg}\left( V,T_{j}\right) =\func{Neg}\left( V_{j},T_{j}\right) \leq 
\func{Neg}\left( 17V_{j},S\right) .
\end{equation*}%
For those $k$ where $S_{k}\subset T_{j}$, we have $a_{k}\left(
17V_{j}\right) \leq 17c$, while $a_{k}\left( 17V_{j}\right) =0$ otherwise.
Assuming that $c$ is small enough, we conclude by Proposition \ref{Pab} that 
$\func{Neg}\left( 17V_{j},S\right) =1$ and, hence,\textbf{\ }$\func{Neg}%
\left( V,T_{j}\right) =1$.

Since by construction%
\begin{equation*}
\#\left\{ T_{j}\right\} \leq 1+\#\left\{ S_{n_{i}}\right\} ,
\end{equation*}%
it follows that%
\begin{eqnarray*}
\func{Neg}\left( V,S\right) &\leq &\sum_{j}\func{Neg}\left( V,T_{i}\right)
+\sum_{i}\func{Neg}\left( V,S_{n_{i}}\right) \\
&\leq &1+\#\left\{ S_{n_{i}}\right\} +\sum_{i}\func{Neg}\left(
V,S_{n_{i}}\right) \\
&\leq &1+2\sum_{i}\func{Neg}\left( V,S_{n_{i}}\right) .
\end{eqnarray*}%
In each $S_{n_{i}}$ we have by (\ref{ran}) and $a_{n_{i}}\left( V\right) >c$
that%
\begin{equation*}
\func{Neg}\left( V,S_{n_{i}}\right) \leq C\sqrt{a_{n_{i}}\left( V\right) }.
\end{equation*}%
Substituting into the previous estimate, we obtain (\ref{sumran}).
\end{proof}

\subsection{Arbitrary potentials in a strip}

We use notation $a_{n}\left( V\right) $ and $b_{n}\left( V\right) $ defined
by (\ref{anV}) and (\ref{bnV}), respectively.

\begin{theorem}
\label{TStrip}For any $p>1$ and for any potential $V$ in the strip $S$, we
have%
\begin{equation}
\func{Neg}\left( V,S\right) \leq 1+C\sum_{\left\{ n\in \mathbb{Z}%
:a_{n}\left( V\right) >c\right\} }\sqrt{a_{n}\left( V\right) }%
+C\sum_{\left\{ n\in \mathbb{Z}:b_{n}\left( V\right) >c\right\} }b_{n}\left(
V\right) ,  \label{NVSmain}
\end{equation}%
where the positive constants $C,c$ depend only on $p.$
\end{theorem}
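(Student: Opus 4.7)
The plan is to split $V=V_1+V_2$ according to the local $L^p$-size measured by $b_n$, handle the ``sparse'' part $V_1$ via Proposition~\ref{Propran}, and handle the ``bulky'' part $V_2$ rectangle-by-rectangle via Lemma~\ref{LemMain}, then combine the two bounds using the quadratic-form inequality of Lemma~\ref{LemV1V2}.

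More precisely, let $c_0$ be the sparsity constant from Section~\ref{SecSparse} and fix a threshold $\theta\in(0,c_0/2)$. For each $n\in\mathbb{Z}$ put $Q_n=S\cap\{n<x_1<n+1\}$, let $E=\bigcup_{\{n:\,b_n(V)>\theta\}}Q_n$, and define $V_2=V\cdot\mathbf{1}_E$, $V_1=V-V_2$. Then $b_n(V_1)\le\theta$ for every $n$, so $b_n(2V_1)\le 2\theta<c_0$, and consequently $2V_1$ is sparse in the sense of Section~\ref{SecSparse}. By Lemma~\ref{LemV1V2},
\[
\func{Neg}(V,S)\le\func{Neg}(2V_1,S)+\func{Neg}(2V_2,S),
\]
and it suffices to bound each term by one of the sums on the right-hand side of (\ref{NVSmain}).

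For the sparse piece, Proposition~\ref{Propran} yields
\[
\func{Neg}(2V_1,S)\le 1+C\!\!\sum_{\{n:\,a_n(2V_1)>c\}}\!\!\sqrt{a_n(2V_1)};
\]
since $V_1\le V$ we have $a_n(2V_1)=2a_n(V_1)\le 2a_n(V)$, so up to adjusting the constants this is controlled by $C\sum_{\{n:\,a_n(V)>c'\}}\sqrt{a_n(V)}$. For the bulky piece, apply Lemma~\ref{LemSub} to the partition $\{Q_n\}$ of $S$, obtaining
\[
\func{Neg}(2V_2,S)\le\sum_{\{n:\,b_n(V)>\theta\}}\func{Neg}(2V_2,Q_n),
\]
because $V_2$ vanishes on every other $Q_n$. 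Each rectangle $Q_n$ is a translate of $(0,1)\times(0,\pi)$, hence bilipschitz equivalent to the unit square $Q$ with a universal constant, so the remark following Lemma~\ref{LemMain} gives
\[
\func{Neg}(2V_2,Q_n)\le 1+C\|2V_2\|_{L^p(Q_n)}=1+2C\,b_n(V).
\]
On the set of indices in the sum we have $b_n(V)>\theta$, so the additive $1$ can be absorbed: $1+2Cb_n(V)\le(\theta^{-1}+2C)b_n(V)$. Therefore $\func{Neg}(2V_2,S)\le C\sum_{\{n:\,b_n(V)>\theta\}}b_n(V)$, and combining with the sparse estimate yields (\ref{NVSmain}).

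The only subtle point is the bookkeeping of constants: $\theta$ must be small enough that $2V_1$ is actually sparse (so that Proposition~\ref{Propran} applies), yet strictly positive so that the ``$+1$'' produced by Lemma~\ref{LemMain} on each bulky $Q_n$ can be absorbed into $C\,b_n(V)$. Everything else is a direct assembly of results already established in the excerpt; in particular, Lemma~\ref{LemV1V2} is what turns the \emph{additive} splitting of the potential into an additive bound on $\func{Neg}$, which is the mechanism that allows a single sparse/bulky dichotomy to yield the two-sum estimate (\ref{NVSmain}).
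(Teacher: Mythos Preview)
Your proof has a genuine gap in the treatment of the bulky piece $V_2$. When you apply Lemma~\ref{LemSub} to the partition $\{Q_n\}_{n\in\mathbb{Z}}$ of $S$, the lemma yields
\[
\func{Neg}(2V_2,S)\le\sum_{n\in\mathbb{Z}}\func{Neg}(2V_2,Q_n),
\]
a sum over \emph{all} integers $n$, not just those with $b_n(V)>\theta$. The fact that $V_2$ vanishes on the remaining $Q_n$ does \emph{not} make those terms zero: as noted just after the definition~(\ref{negdef}), one has $\func{Neg}(V,\Omega)\ge 1$ for \emph{every} potential, including $V\equiv 0$. Hence each ``empty'' $Q_n$ contributes $1$ to the sum. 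Since the interesting case is precisely when only finitely many $b_n$ exceed $\theta$ (otherwise the right-hand side of (\ref{NVSmain}) is infinite and there is nothing to prove), the set $\{n:b_n(V)\le\theta\}$ is infinite and your bound on $\func{Neg}(2V_2,S)$ blows up to $+\infty$.

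The paper's proof repairs exactly this point by using a coarser partition of $S$: the rectangles $Q_{n_i}$ where $b_{n_i}(V)>c$, together with the maximal \emph{gap rectangles} $T_j$ filling the complement. The key combinatorial fact is then $\#\{T_j\}\le 1+\#\{Q_{n_i}\}$, so the total contribution of the additive $1$'s from Lemma~\ref{LemSub} (both from the $Q_{n_i}$'s and from the $T_j$'s, on which $V''\equiv 0$ gives $\func{Neg}=1$) is at most $1+2\#\{Q_{n_i}\}$, which can be absorbed into $C\sum_i b_{n_i}(V)$. Your argument is salvaged by inserting precisely this step; everything else---the splitting $V=V_1+V_2$ via Lemma~\ref{LemV1V2}, the use of Proposition~\ref{Propran} for the sparse part, and Lemma~\ref{LemMain} on each bulky rectangle---matches the paper's approach.
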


\begin{proof}
Define $Q_{n}=S\cap \left\{ n<x_{1}<n+1\right\} $ so that 
\begin{equation*}
b_{n}\left( V\right) =\left( \int_{Q_{n}}V^{p}dx\right) ^{1/p}.
\end{equation*}%
Let $\left\{ n_{i}\right\} $ be a sequence of all those $n\in \mathbb{Z}$
for which 
\begin{equation}
b_{n}\left( V\right) >c,  \label{sat}
\end{equation}%
where $c$ is a positive constant whose value will be determined below. If
this sequence is empty then the potential $V$ is sparse, and (\ref{NVSmain})
follows from Proposition \ref{Propran}.

Assume in the sequel that the sequence $\left\{ n_{i}\right\} $ is
non-empty. Denote by $\left\{ T_{j}\right\} $ a sequence of rectangles that
fill the gaps in $S$ between successive rectangles $Q_{n_{i}}$ or between
one of $Q_{n_{i}}$ and $\pm \infty $ (cf. Fig. \ref{pic10}).\FRAME{ftbphFU}{%
5.713in}{0.774in}{0pt}{\Qcb{Partitioning of the strip $S$ into rectangles $%
Q_{n_{i}}$ and $T_{j}$}}{\Qlb{pic10}}{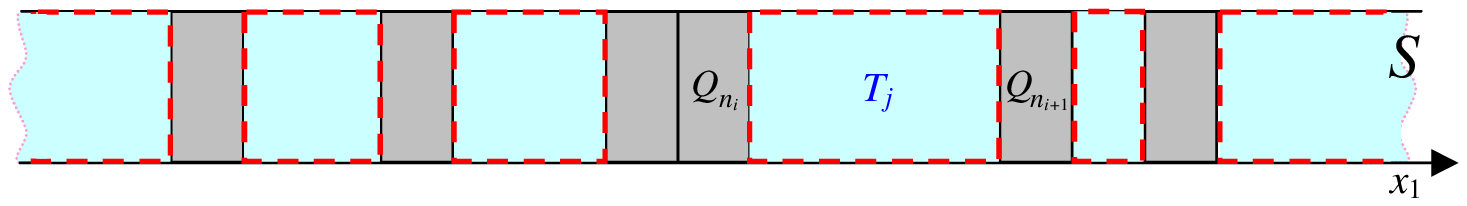}{\special{language
"Scientific Word";type "GRAPHIC";maintain-aspect-ratio TRUE;display
"USEDEF";valid_file "F";width 5.713in;height 0.774in;depth
0pt;original-width 6.5959in;original-height 1.1632in;cropleft "0";croptop
"1";cropright "1";cropbottom "0";filename 'pic10.eps';file-properties
"XNPEU";}}

Consider the potentials $V^{\prime }=V\mathbf{1}_{\cup T_{j}}$ and $%
V^{\prime \prime }=V\mathbf{1}_{\cup Q_{n_{i}}}$. Since $V=V^{\prime
}+V^{\prime \prime }$, by Lemma \ref{LemV1V2} we obtain%
\begin{equation*}
\func{Neg}\left( V,S\right) \leq \func{Neg}\left( 2V^{\prime },S\right) +%
\func{Neg}\left( 2V^{\prime \prime },S\right) .
\end{equation*}%
The potential $2V^{\prime }$ is sparse by construction provided the constant 
$c$ in (\ref{sat}) is small enough. Hence, we obtain by Proposition \ref%
{Propran}%
\begin{equation}
\func{Neg}\left( 2V^{\prime },S\right) \leq 1+C\sum_{\left\{ n:a_{n}\left(
V^{\prime }\right) >c\right\} }\sqrt{a_{n}\left( V^{\prime }\right) }.
\label{V'}
\end{equation}%
By Lemma \ref{LemSub} and Lemma \ref{LemMain}, we obtain 
\begin{eqnarray*}
\func{Neg}\left( 2V^{\prime \prime },S\right) &\leq &\sum_{j}\func{Neg}%
\left( 2V^{\prime \prime },T_{j}\right) +\sum_{i}\func{Neg}\left( 2V^{\prime
\prime },Q_{n_{i}}\right) \\
&=&\#\left\{ T_{j}\right\} +\sum_{i}\left( 1+C\left\Vert 2V^{\prime \prime
}\right\Vert _{L^{p}\left( Q_{n_{i}}\right) }\right) \\
&=&\#\left\{ T_{j}\right\} +\#\left\{ Q_{n_{i}}\right\}
+2C\sum_{i}b_{n_{i}}\left( V\right) .
\end{eqnarray*}%
By construction we have $\#\left\{ T_{j}\right\} \leq 1+\#\left\{
Q_{n_{i}}\right\} .$ By the choice of $n_{i}$, we have $1<c^{-1}b_{n_{i}}%
\left( V\right) ,$ whence%
\begin{eqnarray*}
\#\left\{ T_{j}\right\} +\#\left\{ Q_{n_{i}}\right\} &\leq &1+2\#\left\{
Q_{n_{i}}\right\} \\
&\leq &1+2c^{-1}\sum_{i}b_{n_{i}}\left( V\right) \leq
3c^{-1}\sum_{i}b_{n_{i}}\left( V\right)
\end{eqnarray*}%
Combining these estimates together, we obtain%
\begin{equation}
\func{Neg}\left( 2V^{\prime \prime },S\right) \leq C^{\prime
}\sum_{i}b_{n_{i}}\left( V\right) =C^{\prime }\sum_{\left\{ n:b_{n}\left(
V\right) >c\right\} }b_{n}\left( V\right)  \label{V''}
\end{equation}%
Adding up (\ref{V'}) and (\ref{V''}) yields%
\begin{equation}
\func{Neg}\left( V,S\right) \leq 1+C\sum_{\left\{ n:a_{n}\left( V^{\prime
}\right) >c\right\} }\sqrt{a_{n}\left( V^{\prime }\right) }+C\sum_{\left\{
n:b_{n}\left( V\right) >c\right\} }b_{n}\left( V\right) .  \label{NVSmain'}
\end{equation}%
Since $V^{\prime }\leq V$, (\ref{NVSmain'}) implies (\ref{NVSmain}), which
finishes the proof.
\end{proof}

\begin{remark}
\RM In fact, we have proved a slightly better inequality (\ref{NVSmain'})
than (\ref{NVSmain}).
\end{remark}

\section{Negative eigenvalues in $\mathbb{R}^{2}$}

\setcounter{equation}{0}\label{SecProof}Here we prove the main Theorem \ref%
{Tmainp}. Recall that Theorem \ref{Tmainp} states the following: for any
potential $V$ in $\mathbb{R}^{2}$,%
\begin{equation}
\func{Neg}\left( V,\mathbb{R}^{2}\right) \leq 1+C\sum_{\left\{ n\in \mathbb{Z%
}:A_{n}>c\right\} }\sqrt{A_{n}}+C\sum_{\left\{ n\in \mathbb{Z}%
:B_{n}>c\right\} }B_{n},  \label{Ng0}
\end{equation}%
where $A_{n}$ and $B_{n}$ are defined in (\ref{An}) and (\ref{Bn}), and $c,C$
are positive constants that depend only on $p>1$.

\begin{proof}[Proof of Theorem \protect\ref{Tmainp}]
Consider an open set $\Omega =\mathbb{R}^{2}\setminus L$ where $L=\left\{ \
x_{1}\geq 0,x_{2}=0\right\} $ is a ray. By Lemma \ref{Lem-K} we have%
\begin{equation}
\func{Neg}\left( V,\mathbb{R}^{2}\right) \leq \func{Neg}\left( V,\Omega
\right) .  \label{Ng1}
\end{equation}%
The function $\Psi \left( z\right) =\ln z$ is holomorphic in $\Omega $ and
provides a biholomorphic mapping from $\Omega $ onto the strip%
\begin{equation*}
\widetilde{S}=\left\{ \left( y_{1},y_{2}\right) \in \mathbb{R}^{2}:\ \
0<y_{2}<2\pi \right\}
\end{equation*}%
(see Fig. \ref{pic6}).\FRAME{ftbphFU}{5.61in}{1.5956in}{0pt}{\Qcb{Conformal
mapping $\Psi :\Omega \rightarrow \widetilde{S}$}}{\Qlb{pic6}}{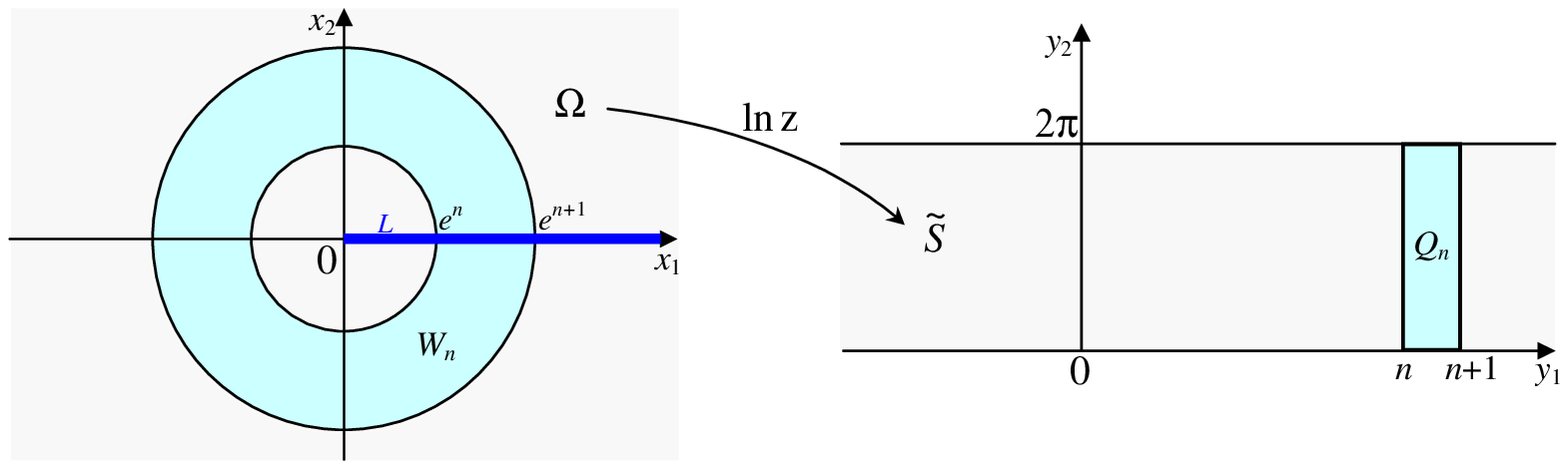}{%
\special{language "Scientific Word";type "GRAPHIC";maintain-aspect-ratio
TRUE;display "USEDEF";valid_file "F";width 5.61in;height 1.5956in;depth
0pt;original-width 6.5665in;original-height 1.8447in;cropleft "0";croptop
"1";cropright "1";cropbottom "0";filename 'pic6.eps';file-properties
"XNPEU";}}

Let $\widetilde{V}$ be a push-forward of $V$ under $\Psi $ (cf. by (\ref{Vti}%
)), so that by Lemma \ref{LemVW}%
\begin{equation}
\func{Neg}\left( V,\Omega \right) =\func{Neg}(\widetilde{V},\widetilde{S}).
\label{Ng2}
\end{equation}%
Since $\widetilde{S}$ and the strip $S$ from Section \ref{SecW} are
bilipschitz equivalent, the estimate (\ref{NVSmain}) of Theorem \ref{TStrip}
holds also for $\widetilde{S}$, that is,%
\begin{equation}
\func{Neg}(\widetilde{V},\widetilde{S})\leq 1+C\sum_{\left\{
n:a_{n}>c\right\} }\sqrt{a_{n}}+C\sum_{\left\{ n:b_{n}\left( V\right)
>c\right\} }b_{n},  \label{Ng3}
\end{equation}%
where we use the following notation: 
\begin{equation*}
a_{n}=\int_{S_{n}}\left( 1+\left\vert y_{1}\right\vert \right) \widetilde{V}%
\left( y\right) dy,\ \ \ b_{n}=\left( \int_{Q_{n}}\widetilde{V}^{p}dy\right)
^{1/p},
\end{equation*}%
where%
\begin{equation*}
S_{n}=\left\{ 
\begin{array}{ll}
\left\{ y\in \widetilde{S}:2^{n-1}<y_{1}<2^{n}\right\} , & n>0, \\ 
\left\{ y\in \widetilde{S}:-1<y_{1}<1\right\} , & n=0, \\ 
\left\{ y\in \widetilde{S}:-2^{\left\vert n\right\vert
}<y_{1}<-2^{\left\vert n\right\vert -1}\right\} , & n<0.%
\end{array}%
\right.
\end{equation*}%
and%
\begin{equation*}
Q_{n}=\left\{ y\in \widetilde{S}:n<y_{1}<n+1\right\} .
\end{equation*}%
Consider also the rings $U_{n}$ and $W_{n}$ in $\mathbb{R}^{2}$ defined by (%
\ref{Un}) and (\ref{Wn}). Obviously, we have%
\begin{equation*}
\Psi \left( U_{n}\setminus L\right) =S_{n}\ \ \ \text{and \ \ }\Psi \left(
W_{n}\setminus L\right) =Q_{n}.
\end{equation*}%
Since $J_{\Psi }=\left\vert \Psi ^{\prime }\left( x\right) \right\vert ^{2}=%
\frac{1}{\left\vert x\right\vert ^{2}}$, where $x$ is treated as a complex
variable, we obtain by (\ref{VW}) \label{rem: more details} that 
\begin{eqnarray}
b_{n}^{p} &=&\int_{Q_{n}}\widetilde{V}^{p}\left( y\right)
dy=\int_{W_{n}}V^{p}\left( x\right) \left\vert J_{\Psi }\left( x\right)
\right\vert ^{1-p}dx  \notag \\
&=&\int_{W_{n}}V^{p}\left( x\right) \left\vert x\right\vert ^{2\left(
p-1\right) }dx=B_{n}^{p}.  \label{In+}
\end{eqnarray}%
Since for $y=\Psi \left( x\right) $ we have $y_{1}=\func{Re}\ln x=\ln
\left\vert x\right\vert $, it follows from (\ref{VW}) that 
\begin{equation*}
a_{n}=\int_{S_{n}}\widetilde{V}\left( y\right) \left( 1+\left\vert
y_{1}\right\vert \right) dy=\int_{U_{n}}V\left( x\right) \left( 1+\left\vert
\ln \left\vert x\right\vert \right\vert \right) dx=A_{n}.
\end{equation*}%
Combining together (\ref{Ng1}), (\ref{Ng2}), (\ref{Ng3}), we obtain (\ref%
{Ng0}).
\end{proof}

\begin{proof}[Proof of Corollary \protect\ref{Coral}]
If a stronger hypothesis%
\begin{equation*}
\sum_{n\in \mathbb{Z}}\sqrt{A_{n}\left( V\right) }+\sum_{n\in \mathbb{Z}%
}B_{n}\left( V\right) <\infty
\end{equation*}%
is satisfied then (\ref{NVA}) is an immediate consequence of (\ref{NegIn}).
To prove (\ref{NVA}) under the hypothesis (\ref{An+Bn}), we need an improved
version of (\ref{NegIn}). Let us come back to the proof of Theorem \ref%
{Tmainp} and use instead of (\ref{Ng3}) the estimate (\ref{NVSmain'}), that
is, 
\begin{equation*}
\func{Neg}(\widetilde{V},\widetilde{S})\leq 1+C\sum_{\left\{ n:a_{n}(%
\widetilde{V}^{\prime })>c\right\} }\sqrt{a_{n}(\widetilde{V}^{\prime })}%
+C\sum_{\left\{ n:b_{n}(\widetilde{V})>c\right\} }b_{n}(\widetilde{V}),
\end{equation*}%
where $\widetilde{V}^{\prime }$ is a modification of $\widetilde{V}$ that
vanishes on the rectangles $Q_{n}$ with $b_{n}(\widetilde{V})>c.$ Then we
obtain instead of (\ref{Ng0}) the following estimate:%
\begin{equation}
\func{Neg}\left( V,\mathbb{R}^{2}\right) \leq 1+C\sum_{\left\{ n\in \mathbb{Z%
}:A_{n}\left( V^{\prime }\right) >c\right\} }\sqrt{A_{n}\left( V^{\prime
}\right) }+C\sum_{\left\{ n\in \mathbb{Z}:B_{n}\left( V\right) >c\right\}
}B_{n}\left( V\right) ,  \label{NegIn'}
\end{equation}%
where $V^{\prime }$ is a modification of $V$ that vanishes on the annuli $%
W_{n}$ with $B_{n}\left( V\right) >c.$ As it was explained in Introduction, (%
\ref{NegIn'}) implies%
\begin{equation}
\func{Neg}\left( V,\mathbb{R}^{2}\right) \leq 1+C\int_{\mathbb{R}%
^{2}}V^{\prime }\left( x\right) \left( 1+\left\vert \ln \left\vert
x\right\vert \right\vert \right) dx+C\sum_{\left\{ n\in \mathbb{Z}%
:B_{n}\left( V\right) >c\right\} }B_{n}\left( V\right) .  \label{NV'}
\end{equation}%
Let us apply (\ref{NV'}) to the potential $\alpha V$ with $\alpha
\rightarrow \infty .$ Denote by $W\left( \alpha \right) $ the union of all
annuli $W_{n}$ with $B_{n}\left( \alpha V\right) \leq c$, so that $\left(
\alpha V\right) ^{\prime }=\alpha V\mathbf{1}_{W\left( \alpha \right) }$.
Then (\ref{NV'}) implies%
\begin{equation}
\func{Neg}\left( \alpha V,\mathbb{R}^{2}\right) \leq 1+C\alpha \int_{\mathbb{%
R}^{2}}V\left( x\right) \mathbf{1}_{W\left( \alpha \right) }\left(
1+\left\vert \ln \left\vert x\right\vert \right\vert \right) dx+C\alpha
\sum_{n\in \mathbb{Z}}B_{n}\left( V\right) .  \label{alV'}
\end{equation}%
For any $n$ with $B_{n}\left( V\right) >0$, the condition $B_{n}\left(
\alpha V\right) >c$ will be satisfied for large enough $\alpha $, so that
for such $\alpha $ the function $V\mathbf{1}_{W\left( \alpha \right) }$
vanishes on $W_{n}$. If $B_{n}\left( V\right) =0$ then $V=0$ on $W_{n}$ and,
hence, $V\mathbf{1}_{W\left( \alpha \right) }=0$ on $W_{n}$ again. We see
that $V\mathbf{1}_{W\left( \alpha \right) }\rightarrow 0$ $\mathrm{a.e.}$ as 
$\alpha \rightarrow \infty $, and by the dominated convergence theorem%
\begin{equation*}
\int_{\mathbb{R}^{2}}V\left( x\right) \mathbf{1}_{W\left( \alpha \right)
}\left( 1+\left\vert \ln \left\vert x\right\vert \right\vert \right)
dx\rightarrow 0.
\end{equation*}
Substituting into (\ref{alV'}) we obtain (\ref{NVA}).
\end{proof}

\begin{proof}[Proof of Corollary \protect\ref{CorW}]
Let us estimate the both terms in the right hand side of (\ref{Zn}) using
the H\"{o}lder inequality. For the first term we have%
\begin{equation*}
\int_{\mathbb{R}^{2}}V\left( x\right) \left( 1+\left\vert \ln \left\vert
x\right\vert \right\vert \right) dx\leq \left( \int_{\mathbb{R}^{2}}V^{p}%
\mathcal{W}dx\right) ^{1/p}\left( \int_{\mathbb{R}^{2}}\frac{\left(
1+\left\vert \ln \left\vert x\right\vert \right\vert \right) ^{p^{\prime }}}{%
\left( \mathcal{W}\left( \left\vert x\right\vert \right) \right) ^{\frac{%
p^{\prime }}{p}}}dx\right) ^{1/p^{\prime }}.
\end{equation*}%
The second integral can be computed in the polar coordinates and it is equal
to%
\begin{equation*}
\int_{\mathbb{R}^{2}}\frac{\left( 1+\left\vert \ln r\right\vert \right) ^{%
\frac{p}{p-1}}}{\mathcal{W}\left( r\right) ^{\frac{1}{p-1}}}2\pi rdr,
\end{equation*}%
which is finite by (\ref{W4}). Hence, we obtain that%
\begin{equation}
\int_{\mathbb{R}^{2}}V\left( x\right) \left( 1+\left\vert \ln \left\vert
x\right\vert \right\vert \right) dx\leq C\left( \int_{\mathbb{R}^{2}}V^{p}%
\mathcal{W}dx\right) ^{1/p}  \label{lnVp}
\end{equation}%
To estimate the second term in (\ref{Zn}), take any sequence $\left\{
l_{n}\right\} _{n\in \mathbb{Z}}$ of positive reals and write%
\begin{eqnarray*}
\sum_{n}B_{n} &=&\sum_{n}l_{n}^{-1/p}l_{n}^{1/p}\left(
\int_{W_{n}}V^{p}\left\vert x\right\vert ^{2\left( p-1\right) }dx\right)
^{1/p} \\
&\leq &\left( \sum_{n}l_{n}^{-\frac{1}{p-1}}\right) ^{1/p^{\prime }}\left(
\sum_{n}l_{n}\int_{W_{n}}V^{p}\left\vert x\right\vert ^{2\left( p-1\right)
}dx\right) ^{1/p}.
\end{eqnarray*}%
Choose here%
\begin{equation*}
l_{n}=\frac{\mathcal{W}\left( e^{n}\right) }{\left( e^{n+1}\right) ^{2\left(
p-1\right) }}
\end{equation*}%
so that, for $x\in W_{n}$,%
\begin{equation*}
l_{n}\left\vert x\right\vert ^{2\left( p-1\right) }\leq \mathcal{W}\left(
e^{n}\right) \leq \mathcal{W}\left( \left\vert x\right\vert \right)
\end{equation*}%
and%
\begin{equation*}
\sum_{n}l_{n}\int_{W_{n}}V^{p}\left( x\right) \left\vert x\right\vert
^{2\left( p-1\right) }dx\leq \int_{\mathbb{R}^{2}}V^{p}\left( x\right) 
\mathcal{W}\left( \left\vert x\right\vert \right) dx.
\end{equation*}%
On the other hand, we have%
\begin{equation*}
\sum_{n}l_{n}^{-\frac{1}{p-1}}=\sum_{n}\frac{e^{2\left( n+1\right) }}{%
\mathcal{W}\left( e^{n}\right) ^{\frac{1}{p-1}}}\simeq \int_{0}^{\infty }%
\frac{rdr}{\mathcal{W}\left( r\right) ^{\frac{1}{p-1}}}<\infty
\end{equation*}%
by (\ref{W4}). Hence,%
\begin{equation}
\sum_{n}B_{n}\leq C\left( \int_{\mathbb{R}^{2}}V^{p}\mathcal{W}dx\right)
^{1/p}.  \label{bnn}
\end{equation}%
Substituting (\ref{lnVp}) and (\ref{bnn}) into (\ref{Zn}), we obtain (\ref%
{NegW}).
\end{proof}

\addcontentsline{toc}{section}{References}
{\footnotesize

}%

\end{document}